\let\oldnl\nl% Store \nl in \oldnl
\newcommand{\nonl}{\renewcommand{\nl}{\let\nl\oldnl}}% Remove line number for one line
\newcommand{\hbb}[1]{\ensuremath{\mathbb{#1}}}% anneaux / corps
\newcommand{\Ai}{\hbb{A}}
\newcommand{\Ci}{\hbb{C}}
\newcommand{\Ki}{\hbb{K}}
\newcommand{\Ni}{\hbb{N}}
\newcommand{\Qi}{\hbb{Q}}
\newcommand{\Li}{\hbb{L}}
\newcommand{\hcal}[1]{\ensuremath{\mathcal{#1}}}
\newcommand{\Bc}{\hcal{B}}
\newcommand{\Nc}{\hcal{N}}
\newcommand{\Oc}{\hcal{O}}
\theoremstyle{plain}% italique, extra space
\newtheorem{thm}{Theorem}
\newtheorem{prop}{Proposition}
\newtheorem{cor}{Corollary}
\newtheorem{lem}{Lemma}
\theoremstyle{definition}% roman, extra space
\newtheorem{dfn}{Definition}
\newtheorem{xmp}{Example}
\theoremstyle{remark}% roman, no space
\newtheorem{rem}{Remark}
\newcommand{\Edata}{\texttt{EdgeData}}
\newcommand{\PIrr}{\texttt{Pseudo-Irreducible}}
\newcommand{\AppRoot}{\texttt{AppRoot}}
\newcommand{\Expand}{\texttt{Expand}}
\renewcommand{\O}{\textrm{\Oc}}% O
\newcommand{\Ot}{\O\tilde\,\,}% soft-O
\newcommand{\dy}{{d}}% degre en y de F
\newcommand{\dx}{{n}}% degre en x de F, pour le paragraphe "bivariate case"
\newcommand{\val}[1][x]{\ensuremath{v_{#1}}}% valuation (en x, y etc)
\newcommand{\D}{\textrm{Data}}
\newcommand{\C}{\textrm{C}}
\newcommand{\Cont}{\textrm{Cont}}
\newcommand{\Res}{\textrm{Res}}
\newcommand{\coef}{\textrm{Coef}}
\newcommand{\Card}{\textrm{Card}}
\newcommand{\vF}[1][]{{\ensuremath{\delta_{#1}}}}% valuation du discriminant de F et F_y
\newcommand{\edgepoly}{boundary polynomial}
\newcommand{\True}{\texttt{True}}
\newcommand{\False}{\texttt{False}}
\newcommand{\Char}{\textrm{Char}}
\newcommand{\NP}{\Nc}% polygone de Newton
\newcommand{\algclos}[1]{\overline{#1}}% cloture algebrique
\begin{document}

\title{Computing the equisingularity type of a pseudo-irreducible polynomial}

\date{}
\author{%
  % \alignauthor
  Adrien Poteaux,\\%
  {CRIStAL, Universit\'e de Lille}\\%
  {UMR CNRS 9189, B\^atiment Esprit}\\%
  {59655 Villeneuve d'Ascq, France}\\%
  \texttt{adrien.poteaux@univ-lille.fr}
  % \alignauthor
  \and Martin Weimann\\%
  {GAATI\footnote{Current delegation. Permanent position at LMNO,
     University of Caen-Normandie, BP 5186, 14032 Caen Cedex,
     France.}
, Universit\'e de Polyn\'esie Fran\c{c}aise}\\%
  {UMR CNRS 6139, BP 6570}\\%
  {98702 Faa'a, Polyn\'esie Fran\c{c}aise}\\%
  \texttt{martin.weimann@upf.pf}%
}

% \author[1]{Adrien POTEAUX}
% \author[2]{Martin WEIMANN}
 
% \affil[1]{University of Lille, France}
% \affil[2]{University of French Polynesia, France}

\maketitle

\begin{abstract}
  Germs of plane curve singularities can be classified accordingly to
  their equisingularity type. For singularities over $\Ci$, this
  important data coincides with the topological class. In this paper,
  we characterise a family of singularities, containing irreducible
  ones, whose equisingularity type can be computed in quasi-linear
  time with respect to the discriminant valuation of a Weierstrass
  equation.
\end{abstract}

\section{Introduction}
Equisingularity is the main notion of equivalence for germs of plane
curves. It was developed in the 60's by Zariski over algebraically
closed fields of characteristic zero in \cite{ZaI65, ZaII65, Za68} and
generalised in arbitrary characteristic by Campillo \cite{Ca80}. This
concept is of particular importance as for complex curves, it agrees
with the topological equivalence class \cite{Za65}.  As illustrated by
an extensive litterature (see e.g. the book \cite{Gr07} and the
references therein), equisingularity plays nowadays an important role
in various active fields of singularity theory (resolution,
equinormalisable deformation, moduli problems, analytic
classification, etc). It is thus an important issue of computer
algebra to design efficient algorithms for computing the
equisingularity type of a singularity. This paper is dedicated to
characterise a family of reduced germs of plane curves, containing
irreducible ones, for which this task can be achieved in quasi-linear
time with respect to the discriminant valuation of a Weierstrass
equation.

\paragraph{Main result.}
We say that two germs of reduced plane curves are equisingular if there
is a one-to-one correspondance between their branches which preserves
the characteristic exponents and the pairwise intersection
multiplicities (see e.g. \cite{Ca80, Ca00,Wa04} for other equivalent definitions). This equivalence relation leads to the notion of equisingularity type of a singularity. In this paper, we consider a square-free Weierstrass polynomial
$F\in\Ki[[x]][y]$ of degree $\dy$, with $\Ki$ a perfect field of
characteristic zero or greater than $\dy$\footnote{Our results still
  hold under the weaker assumption that the characteristic of $\Ki$
  does not divide $\dy$.}. Under such assumption, the Puiseux series of $F$
are well defined and allow to determine the equisingularity type of
the germ $(F,0)$ (the case of small characteristic requires
Hamburger-Noether expansions \cite{Ca80}). In particular, it follows
from \cite{PoWe17} that we can compute the equisingularity type in an
expected $\Ot(\dy\,\vF)$ operations over $\Ki$, where $\vF$ stands for the valuation of the
discriminant of $F$. If moreover $F$ is
irreducible, it is shown in \cite{PoWe19} that we can reach the lower
complexity $\Ot(\vF)$ thanks to the theory of approximate roots. In
this paper, we extend this result to a larger class of polynomials.

We say that $F$ is \emph{balanced} or
\textit{pseudo-irreducible}\footnote{In the sequel, we rather use
  first the terminology \emph{balanced} and give an alternative
  definition of pseudo-irreducibility based on a Newton-Puiseux type
  algorithm. Both notions agree from Theorem \ref{thm:pseudo}.} if
all its absolutely irreducible factors have the same set of
characteristic exponents and the same set of pairwise intersection
multiplicities, see Section \ref{sec:equising}. Irreducibility over
any algebraic field extension of $\Ki$ implies pseudo-irreducibility
by a Galois argument, but the converse does not hold. As a basic
example, the Weierstrass polynomial $F=(y-x)(y-x^2)$ is
pseudo-irreducible, but is obviously reducible. We prove:

\begin{thm}\label{thm:main2}
  There exists an algorithm which tests if $F$ is pseudo-irreducible
  with an expected $\Ot(\vF)$\footnote{As usual, the notation $\Ot()$
    hides logarithmic factors. Note that $F$ being Weierstrass, we
    have $\dy\le \vF$ and $\vF\log(\dy)\in\Ot(\vF)$.} operations over
  $\Ki$. If $F$ is pseudo-irreducible, the algorithm computes also
  $\vF$ and the number of absolutely irreducible factors of $F$
  together with their sets of characteristic exponents and sets of
  pairwise intersection multiplicities. In particular, it computes the
  equisingularity type of the germ $(F,0)$.
\end{thm}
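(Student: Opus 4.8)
The plan is to build a Newton--Puiseux type algorithm, call it \PIrr{}, which mimics the rational Newton--Puiseux process but operates on \emph{approximate roots} of $F$ rather than on $F$ itself, thereby extending the irreducible case of \cite{PoWe19} to the balanced setting. The guiding principle is the characterisation of pseudo-irreducibility (Theorem~\ref{thm:pseudo}): $F$ is balanced exactly when, at every stage of a suitable Newton--Puiseux unfolding, the relevant boundary polynomial is, up to the balanced ambiguity, a power of a single irreducible univariate polynomial over an algebraic extension of $\Ki$. So \PIrr{} will run this unfolding one stage at a time. At each stage it computes the edge of the Newton polygon meeting the $y$-axis, extracts its boundary polynomial via \Edgepoly{}, and applies the Abhyankar-type irreducibility test \abhyankar{}/\AbhyankarMoh{} to decide whether that boundary polynomial passes the balanced test; if it does, it reads off the slope data $(q_i,m_i)$ and multiplicity $\ell_i$ contributing one new characteristic Puiseux pair, then performs a change of variables and a Hensel-type lift \onestep{} to pass to the next stage. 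The crucial point is that instead of carrying $F$ — whose degree would force working at precision $\Ot(\dy\,\vF)$, as in \cite{PoWe17} — we carry the $d_i$-th approximate root of $F$, whose $y$-degree $\dy/d_i$ shrinks at least geometrically; this is exactly the mechanism collapsing the cost from $\Ot(\dy\,\vF)$ to $\Ot(\vF)$.

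First I would set up the balanced analogue of a single rational Newton--Puiseux stage and verify its soundness: if the boundary polynomial fails the balanced test, \PIrr{} returns \False{}, which is correct by Theorem~\ref{thm:pseudo}; otherwise it records the integers read off the edge, updates the running tuple of characteristic exponents, updates the running tuple of pairwise intersection multiplicities through the classical formula expressing the intersection multiplicity of two branches in terms of their common initial Puiseux data, updates the running partial sum approximating $\vF$ through the classical expression of the discriminant valuation as a sum of edge contributions, computes the next approximate root with \AppRoot{}, and recurses. Since the sequence of multiplicities strictly decreases with each dividing the previous, it at least halves at each stage, so only $O(\log\dy)$ stages occur.

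For correctness I would prove, by induction on the stage, that the data produced by \PIrr{} through stage $i$ coincides with the common data of all absolutely irreducible factors of $F$, truncated at level $i$. The inductive step is precisely the content of the balanced characterisation together with the standard behaviour of approximate roots under Newton--Puiseux moves: when $d$ divides the current multiplicity, the $d$-th approximate root of $F$ shares with $F$ the first several Puiseux pairs, so running the unfolding on it loses no information up to that point. When the loop terminates the accumulated tuples are exactly the sets of characteristic exponents and of pairwise intersection multiplicities of the branches, $\vF$ has been assembled along the way, and feeding these into the definition of the equisingularity type of Section~\ref{sec:equising} gives the final claim, together with the number of absolutely irreducible factors (recovered from the product of the multiplicities $\ell_i$ detected along the way).

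I expect the main obstacle to be the complexity analysis, and it splits in two. First, one must fix a priori the $x$-adic truncation order used at each stage so that all boundary polynomials, approximate roots and Hensel lifts are computed correctly; the correct bound comes from the edge contributions to $\vF$, and the subtle part is to show these truncation orders telescope, so that the \emph{total} power-series precision handled across all $O(\log\dy)$ stages is $\Ot(\vF)$ rather than $O(\log\dy)$ times that. Second, one must show each stage costs $\Ot(\vF)$: the dominating substeps are the computation of an approximate root — one power-series Newton iteration on a polynomial of $y$-degree $\dy/d_i$ at precision $\Ot(\vF)$, hence quasi-linear — and the irreducibility test on the boundary polynomial, which is univariate over $\Ki$ and of total size $\Ot(\vF)$. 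Summing over stages and absorbing the logarithmic factor using $\vF\log(\dy)\in\Ot(\vF)$, as noted in the statement, yields the claimed bound. Conceptually, what makes the quasi-linear cost reachable in the balanced-but-reducible case is precisely that \PIrr{} never factors $F$: it only factors the much smaller boundary polynomials, which is enough to detect and exploit the balanced structure.
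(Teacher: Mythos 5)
Your overall architecture (a Newton--Puiseux unfolding driven by approximate roots, read off the edge data, feed it into the formulas of Theorem~\ref{thm:pseudo} and Corollary~\ref{cor:disc}) matches the paper's in spirit, but the central algorithmic mechanism you propose is not the one the paper uses, and I do not think it can work as stated. You write that at each stage the algorithm ``performs a change of variables and a Hensel-type lift \onestep{} to pass to the next stage'' and that ``we carry the $d_i$-th approximate root of $F$'' to bring the cost down. The paper explicitly rules out this route: the introduction stresses that ``no splittings and no Hensel liftings are required,'' and the remark at the end of Section~\ref{ssec:algo-irr} states that computing the intermediate polynomials $G_k$ of \eqref{eq:Gk} via Hensel lifting may cost $\Omega(\dy\,\vF)$, which is exactly the bound we are trying to beat. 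Replacing $F$ by an approximate root inside the Hensel step does not repair this, because the boundary polynomial $\bar{H}_k$ that must be tested at stage $k$ is not determined by $\psi_k$ alone: the approximate root $\psi_k$ is defined by $\deg(F-\psi_k^{N_k})<\dy-\dy/N_k$, and it is precisely the ``remainder'' terms of $F$ relative to the $\psi_i$'s --- not the $\psi_i$'s themselves --- that populate $\bar{H}_k$. So a Newton--Puiseux unfolding run on the carried approximate root would lose exactly the information needed to form the next edge.

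What the paper actually does is never lift or transform anything: it keeps $F$ fixed, recomputes a fresh approximate root $\psi_k$ of $F$ at each stage, forms the $\Psi$-adic expansion $F=\sum_B f_B\Psi^B$ over the basis $\Psi=(x,\psi_0,\dots,\psi_k)$ with the support constraint \eqref{eq:Bc}, and then reads the lower boundary polynomial $\bar{H}_k$ directly off the expansion via the closed formula of Theorem~\ref{thm:EdgePoly}, using the weight vector $V$ and the coefficient vector $\Lambda$ that are carried along and updated by \eqref{eq:update}. That theorem is the key technical ingredient making the whole scheme work, and your proposal never invokes it or anything in its place. The complexity claim then follows not from ``degree shrinks geometrically, so the Hensel steps telescope,'' but from the analysis in \cite[Proposition~12]{PoWe19} applied to \AppRoot{}, \Expand{}, and \Edata{} with the pseudo-degeneracy modifications, together with suitable $x$-adic truncation bounds and primitive-element representations of the residue rings $\Ki_k$. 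I would encourage you to re-derive Theorem~\ref{thm:EdgePoly} (or at least state it and use it) rather than relying on a Hensel-lift mechanism: without it the correctness and the complexity of each stage are both unsubstantiated.
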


The algorithm contains a Las Vegas subroutine for computing primitive
elements in residue rings; however it should become deterministic
thanks to the recent preprint \cite{HoLe18}. For a given field
extension $\Li$ of $\Ki$, we can also compute the degrees, residual
degrees and ramification indices of the irreducible factors of $F$ in
$\Li[[x]][y]$ by performing an extra univariate factorisation of
degree at most $\dy$ over $\Li$.  Having a view towards fast
factorisation in $\Ki[[x]][y]$, we can extend the definition of
pseudo-irreducibility to non Weierstrass polynomials, taking into
account all germs of curves defined by $F$ along the line $x=0$. Our
approach adapts to this more general setting, with complexity
$\Ot(\vF+\dy)$\footnote{When $F$ is not Weierstrass, we might have
  $\dy\notin\Ot(\vF)$.} (see Section \ref{sec:psi}).

\paragraph{Main tools.} We generalise the irreducibility test obtained
in \cite{PoWe19}, which is itself a generalisation of Abhyankhar's
absolute irreducibility criterion \cite{Ab89}, based on the theory of
approximate roots. The main idea is to compute recursively some
suitable approximate roots $\psi_0,\ldots,\psi_g$ of $F$ of strictly
increasing degrees such that $F$ is pseudo-irreducible if and only if
we reach $\psi_g=F$. At step $k$, we compute the
$(\psi_0,\ldots,\psi_k)$-adic expansion of $F$ from which we can
construct a generalised Newton polygon. If the corresponding
\edgepoly{} of $F$ is not pseudo-degenerated (Definition
\ref{def:pseudodeg}), then $F$ is not pseudo-irreducible. Otherwise,
we deduce the degree of the next approximate root $\psi_{k+1}$ that
has to be computed.

The key difference when compared to the irreducibility test developped
in \cite{PoWe19} is that we may allow the successive generalised
Newton polygons to have several edges, although no splittings and no
Hensel liftings are required. Except this slight modification, most of
the algorithmic considerations have already been studied in
\cite{PoWe19} and this paper is more of a theoretical nature, focused
on two main points : proving that pseudo-degeneracy is the right
condition for characterising pseudo-irreducibility, and giving
formulas for the intersection multiplicities and characteristic
exponents in terms of the underlying edge data sequence. This is our
main Theorem \ref{thm:pseudo}.

\paragraph{Related results.} Computing the equisingularity type of a
plane curve singularity is a classical topic for which both symbolic
and numerical methods exist. A classical approach is derived from the
Newton-Puiseux algorithm, as a combination of blow-ups (monomial
transforms and shifts) and Hensel liftings. This approach allows to
compute the roots of $F$ - represented as fractional Puiseux series -
up to an arbitrary precision, from which the equisingularity type of
the germ $(F,0)$ can be deduced (see e.g. Theorem \ref{thm:pseudo} for
precise formulas). The Newton-Puiseux algorithm has been studied by
many authors (see e.g. \cite{Du89, DeDiDu85, Wa00, Te90, Po08, PoRy11,
  PoRy15, PoWe17, PoWe19} and the references therein). Up to our
knowledge, the best current arithmetic complexity was obtained in
\cite{PoWe17}, computing the singular parts of all Puiseux series
above $x=0$ - hence the equisingularity type of all germs of curves
defined by $F$ along this line - in an expected $\Ot(\dy\,\vF)$
operations over $\Ki$. Here, we get rid of the $\dy$ factor for
pseudo-irreducible polynomials, generalising the irreducible case
considered in \cite{PoWe19}.  For complex curves, the equisingularity
type agrees with the topological class and there exists other
numerical-symbolic methods of a more topological nature (see
e.g. \cite{HoMoSc10,HoMoSc11,HoNgPh17,HoSc11,SoWa05} and the
references therein).  This paper comes from a longer preprint
\cite{PoWeBis19} which contains also results of \cite{PoWe19}.

\paragraph{Organisation.} We define balanced polynomials in Section
\ref{sec:equising}. Section \ref{sec:pseudo-irr} introduces the notion
of pseudo-degeneracy. This leads to an alternative definition of a
pseudo-irreducible polynomial, based on a Newton-Puiseux type
algorithm. In Section \ref{sec:equivalent}, we prove that being
balanced is equivalent to being pseudo-irreducible and we give
explicit formulas for characteristic exponents and intersection
multiplicities in terms of edge data (Theorem \ref{thm:pseudo}). In
the last Section \ref{sec:psi}, we design a pseudo-irreducibility test
based on approximate roots with quasi-linear complexity, thus proving
Theorem \ref{thm:main2}. We illustrate our method on various examples.

%%% Local Variables:
%%% mode: latex
%%% TeX-master: "equi-singularity"
%%% End:

\section{Balanced polynomials}\label{sec:equising}
Let us fix $F\in \Ki[[x]][y]$ a Weierstrass polynomial defined over a
perfect field $\Ki$ of characteristic zero or greater than
$\dy=\deg(F)$. For simplicity, we abusively denote by
$(F,0)\subset (\algclos{\Ki}^2,0)$ the germ of the plane curve defined
by $F$ at the origin of the affine plane $\algclos{\Ki}^2$. We say
that $F$ is absolutely irreducible if it is irreducible in
$\algclos{\Ki}[[x]][y]$. The germs of curves defined by the absolutely
irreducible factors of $F$ are called the branches of the germ
$(F,0)$.

\subsection{Characteristic exponents}\label{ssec:charexp}
We assume here that $F$ is absolutely irreducible. As the
characteristic of $\Ki$ does not divide $\dy$, there exists a unique
series $S(T)=\sum c_i T^i\in \algclos{\Ki}[[T]]$ such that $F(T^d,S(T))=0$. The pair $(T^d, S(T))$ is the
classical Puiseux parametrisation of the branch $(F,0)$. The
\textit{characteristic exponents} of $F$ are defined as
\[
\beta_0=d,\quad \beta_k=\min\left(i\,\,\text{ s.t.} \,\, c_i\ne 0,\,
  gcd(\beta_0,\ldots,\beta_{k-1}) \not| i \right), \quad k=1,\ldots,g,
\]
where $g$ is the least integer for which
$gcd(\beta_0,\ldots,\beta_g)=1$ (characteristic exponents are
sometimes refered to the rational numbers $\beta_i/d$ in the
litterature). These are the exponents $i$ for which a non trivial
factor of the ramification index is discovered. It is well known that
the data
\[
\C(F)=(\beta_0;\beta_1,\ldots,\beta_g)
\]
determines the equisingularity type of the germ $(F,0)$, see e.g.
\cite{Za65}. Conversely, the Weierstrass equations of two equisingular
germs of curves which are not tangent to the $x$-axis have same
characteristic exponents \cite[Corollary 5.5.4]{Ca00}. If tangency
occurs, we rather need to consider the characteristic exponents of the
local equation obtained after a generic change of local coordinates,
which form a complete set of equisingular (hence topological if
$\Ki=\Ci$) invariants. The set $\C(F)$ and the set of generic
characteristic exponents determine each others assuming that we are
given the contact order  $\beta_0$ with $x$-axis (\cite[Proposition
4.3]{Pop02} or \cite[Corollary 5.6.2]{Ca00}). It is well known that a
data equivalent to $\C(F)$ is given by the semi-group of $F$, and that
this semi-group admits the intersection multiplicities of $F$ with its
characteristic approximate roots $\psi_{-1},\psi_0,\ldots,\psi_g$ as a
minimal system of generators (see Section \ref{ssec:phi-main} and
\cite[Corollaries 5.8.5 and 5.9.11]{Ca00}).

\subsection{Intersection sets}\label{ssec:IntSets}

If we want to determine the equisingularity type of a reducible germ
$(F,0)$, we need to consider also the pairwise intersection
multiplicities between the absolutely irreducible factors of $F$. The
intersection multiplicity between two coprime Weierstrass polynomials
$G,H\in \Ki[[x]][y]$ is defined as
\begin{equation}\label{eq:intmult}
  (G,H)_0:=\val(\Res_y(G,H))=\dim_{\algclos{\Ki}}\frac{\algclos{\Ki}[[x]][y]}{(G,H)},
\end{equation}
where $\Res_y$ stands for the resultant with respect to $y$ and $\val$
is the usual $x$-valuation. The right hand equality follows from
classical properties of the resultant. Suppose that $F$ has (distinct)
absolutely irreducible factors $F_1,\ldots,F_{f}$. We introduce
\textit{the intersection sets} of $F$, defined for $i=1,\ldots,f$ as
\[
\Gamma_i(F):=\left((F_i,F_j)_0,\, 1\le j \le f, j\ne i\right).
\]
By convention, we take into account repetitions, $\Gamma_i(F)$ being
considered as an unordered list with cardinality $f-1$.  If $F$ is
Weierstrass, the equisingular type (hence the topological class if
$\Ki=\Ci$) of the germ $(F,0)$ is uniquely determined by the
characteristic exponents and the intersections sets of the branches of
$F$ \cite{Za73}. Note that the set $\C(F_i)$ only depends on $F_i$
while $\Gamma_i(F)$ depends on $F$.

\subsection{Balanced polynomials}\label{ssec:defequising}

\begin{dfn}\label{def:balanced}
  We say that a square-free Weierstrass\footnote{We can extend this
    definition to non Weierstrass polynomials, see Subsection
    \ref{ssec:nonweirestrass}.} polynomial $F\in \Ki[[x]][y]$ is
  \emph{balanced} if $\C(F_i)=\C(F_j)$ and $\Gamma_i(F)=\Gamma_j(F)$
  for all $i,j$. In such a case, we denote simply these sets by
  $\C(F)$ and $\Gamma(F)$.
\end{dfn}

Thus, if $F$ is balanced, its branches are equisingular and have the same
set of pairwise intersection multiplicities. The converse holds if no
branch is tangent to the $x$-axis or all branches are tangent to the
$x$-axis.

\begin{xmp}\label{rem:deg}
  Let us illustrate this definition with some basic examples. Note
  that the second and third examples show in particular that no
  condition implies the other in Definition \ref{def:balanced}.
  \begin{enumerate}
  \item If $F\in \Ki[[x]][y]$ is irreducible, a Galois argument shows
    that it is balanced (follows from Theorem \ref{thm:pseudo}
    below). The converse doesn't hold: $F=(y-x)(y+x^2)$ is reducible,
    but it is balanced. This example also shows that being balanced does
    not imply the Newton polygon to be straight.
  \item $F=(y^2-x^3)(y^2+x^3)(y^2+x^3+x^4)$ is not balanced. It has
    $3$ absolutely irreducible factors with same sets of
    characteristic exponents $\C(F_i)=(2;3)$ for all $i$, but
    $\Gamma_1(F)=(6,6)$ while $\Gamma_2(F)=\Gamma_3(F)=(6,8)$.
  \item $F=(y-x-x^2)(y-x+x^2)(y^2-x^3)$ is not balanced. It has $3$
    absolutely irreducible factors with same sets of pairwise
    intersection multiplicities $\Gamma_i(F)=(2,2)$, but
    $\C(F_1)=\C(F_2)=(1)$ while $\C(F_3)=(2;3)$.
  \item $F=(y^2-x^2)^2-2x^4y^2-2x^6+x^8$ has four absolutely
    irreducible factors, namely $F_1=y+x+x^2 $, $F_2=y+x-x^2$,
    $F_3=y-x+x^2$ and $F_4=y-x-x^2$.  We have $C(F_i)=(1)$ and
    $\Gamma_i(F)=(1,1,2)$ for all $i$ so $F$ is balanced. Note that
    this example shows that being balanced does not imply that all
    factors intersect each others with the same multiplicity.
  \item $F=(y^2-x^3)(y^3-x^2)$ is not balanced. However, it defines
    two equisingular germs of plane curves (but one is tangent to the
    $x$-axis while the other is not).
  \end{enumerate}
\end{xmp}

\paragraph{Noether-Merle's Formula.} If $F,G\in \algclos{\Ki}[[x]][y]$
are two irreducible Weierstrass polynomials of respective degrees
$d_F$ and $d_G$, their intersection multiplicty $(F,G)_0$ is closely
related to the characteristic exponents $(\beta_0,\ldots,\beta_g)$ of
$F$. Let us denote by 
\begin{equation}\label{eq:ContFormula}
\Cont(F,G):=d_F\,\max\left(\val(y-y') \,\, | \,\,F(y)=0,\,\,G(y')=0\right)
\end{equation}
the \textit{contact order} of the branches $F$ and $G$ and let
$\kappa = \max\{k\,|\,\Cont(F,G) \geq \beta_k\}$. Then Noether-Merle's
formula \cite[Proposition 2.4]{Me77}  states
\begin{equation}\label{eq:NMformula}
  (F,G)_0=\frac{d_G}{d_F}\left(\sum_{k\le\kappa} (E_{k-1}-E_{k})\beta_k+E_\kappa\,\Cont(F,G)\right),
\end{equation}
where $E_k:=gcd(\beta_0,\ldots,\beta_k)$. A proof can be found in
\cite[Proposition 6.5]{Pop02} (and references therein), where a
formula is given in terms of the semi-group generators, which turns
out to be equivalent to \eqref{eq:NMformula} thanks to
\cite[Proposition 4.2]{Pop02}. Note that the original proof in
\cite{Me77} assumes that the germs $F$ and $G$ are transverse to the
$x$-axis.

%%% Local Variables:
%%% mode: latex
%%% TeX-master: "equi-singularity"
%%% End:

\section{Pseudo-irreducible polynomials}\label{sec:pseudo-irr}
\subsection{Pseudo-degenerated polynomials.}\label{ssec:pseudodeg}
We first recall classical definitions that play a central role for our
purpose, namely the Newton polygon and the residual polynomial. We
will have to work over various residue rings isomorphic to some direct
product of fields extension of the base field $\Ki$. Let
$\Ai=\Li_0\oplus \cdots \oplus \Li_r$ be such a ring. If
$S=\sum c_i x^i\in \Ai[[x]]$, we define
$\val(S)=\min(i\,,\, c_i\ne 0)$ with convention
$\val(0)=+\infty$. Note that in contrast to usual valuations, we have
$\val(S_1\,S_2)\ge \val(S_1)+\val(S_2)$ and strict inequality might occur since
$\Ai$ is allowed to contain zero divisors.

In the following definitions, we assume that $F\in \Ai[[x]][y]$
is a Weierstrass polynomial and we let
$F=\sum_{i=0}^\dy a_i(x)\,y^i=\sum_{i,j} a_{ij}x^j y^i$. 
\begin{dfn}\label{dfn:NP}
  The \textit{Newton polygon} of $F$ is the lower convex hull $\NP(F)$
  of the set of points $(i,\val(a_i))$ with $a_i\ne 0$ and
  $i=0,\ldots,\dy$. We denote by $\NP_0(F)$ the lower edge (right hand
  edge) of the Newton polygon.
\end{dfn}
The lower edge has equation $mi+qj=l$ for some uniquely determined
coprime positive integers $q,m$ and $l\in \Ni$. We say for short that
$\NP_0(F)$ has slope $(q,m)$, with convention $(q,m)=(1,0)$ if the
Newton polygon of $F$ is reduced to a point.
\begin{dfn}\label{dfn:edgePol}
  We call $\bar{F}:=\sum_{(i,j)\in \NP_0(F)} a_{ij} x^j y^i$ the
  \emph{lower \edgepoly{}} of $F$.
\end{dfn}
We say that a polynomial $P\in \Ai[Z]$ is
\textit{square-free} if its images under the natural
morphisms $\Ai\to \Li_i$ are square-free (in the usual sense over a
field).

\begin{dfn}\label{def:pseudodeg}
  We say that $F\in\Ai[[x]][y]$ is \emph{pseudo-degenerated} if there
  exists $N\in \Ni$ and $P\in \Ai[Z]$ monic and square-free such that
\begin{equation}\label{eq:pseudoquasihom}
  \bar{F}=\left(P\left(\frac{y^q}{x^{m}}\right)x^{m\deg(P)}\right)^N,
\end{equation}
with moreover $P(0)\in \Ai^\times$ (units of $\Ai$) if $q>1$. We call
$P$ the \emph{residual polynomial} of $F$. The tuple $(q,m,P,N)$ is
the \emph{edge data} of $F$.
\end{dfn}

\begin{rem}\label{rem:testpseudodeg}
  In practice, we check pseudo-degeneracy as follows. If $q$ does not
  divide $\dy$, then $F$ is not pseudo-degenerated. If $q$ divides $\dy$,
  then $q|i$ for all $(i,j)\in \NP_0(F)$ as $(d,0)\in \NP_0(F)$ by
  assumption. Hence we may consider
  $Q=\sum_{(i,j)\in \NP_0(F)} a_{ij} Z^{i/q}\in \Ai[Z]$ and $F$ is
  pseudo-degenerated if and only if $Q=P^N$ for some square-free
  polynomial $P$ such that $P(0)\in \Ai^\times$ if $q>1$.
\end{rem}

\begin{rem}\label{rem:notstraight}
  If $q>1$, the extra condition $P(0)\in \Ai^\times$ implies that
  $\NP(F)$ is straight.  If $q=1$, we allow $P(0)$ to be a
  zero-divisor (in contrast to Definition 4 of quasi-degeneracy in
  \cite{PoWe19}), in which case $\NP(F)$ may have several edges. Note
  that if $F$ is pseudo-degenerated, $\bar{F}$ is the power of a
  square-free quasi-homogeneous polynomial, but the converse doesn't
  hold (case \ref{it:xmp-pseudo4} below).
\end{rem}

\begin{xmp}{\textcolor{white} a}
  \begin{enumerate}
  \item Let $F=(y^2-x^2)^2(y-x^2)(y-x^3)$. Then $\NP(F)$ has three
    edges, the lower one of slope $(q,m)=(1,1)$. We get
    $\bar{F}=(y^3-x^2y)^2$ and $Q=(Z^3-Z)^2$. Hence, $F$ is
    pseudo-degenerated, with $P=Z^3-Z$ and $N=2$.
  \item Let $F=(y^2-x^2)^2(y-x^2)$. Then $\NP(F)$ has two edges, the
    lower one of slope $(q,m)=(1,1)$. We get $\bar{F}=y(y^2-x^2)^2$
    and $Q=Z(Z^2-1)^2$ is not a power of a square-free
    polynomial. Hence, $F$ is not pseudo-degenerated.
  \item Let $F=(y^2-x^3)^2(y-x^4)$. Then $\NP(F)$ has two edges, the
    lower one of slope $(q,m)=(2,3)$. As $q$ does not divide $\dy=5$,
    $F$ is not pseudo-degenerated.
  \item \label{it:xmp-pseudo4} Let $F=(y^2-x^3)^2(y-x^4)^2$. Then
    $\NP(F)$ is straight of slope $(q,m)=(2,3)$. Here $q$ divides
    $\dy=6$. We get $\bar{F}=y^2(y^2-x^3)^2$ which is a power of a
    square-free polynomial. However, $Q=Z(Z-1)^2$ is not. Hence, $F$
    is not pseudo-degenerated.
  \item Let $F=(y^2-x^3)^2(y^2-x^4)^2$. Then $\NP(F)$ has two edges,
    the lower one of slope $(q,m)=(2,3)$. Here $q$ divides $\dy=8$. We
    get $\bar{F}=(y^4-y^2x^3)^2$ and $Q=(Z^2-Z)^2$ is the power of the
    square-free polynomial $P=Z^2-Z$. However, $q>1$ and $P(0)=0$ so
    $F$ is not pseudo-degenerated.
  \end{enumerate}
\end{xmp}
Note that we could also treat cases 3, 4 and 5 simply by using Remark
\ref{rem:notstraight}: $q>1$ and $\NP(F)$ not straight imply that $F$
is not pseudo-degenerated.

The next lemma allows to associate to a pseudo-degenerated polynomial
$F$ a new Weierstrass polynomial of smaller degree, generalising the
usual case (e.g. \cite[Sec.4]{Du89} or \cite[Prop.3]{PoRy15}) to the case of product of fields.

\begin{lem}\label{lem:anptransform}
  Suppose that $F$ is pseudo-degenerated with edge data $(q,m,P,N)$
  and denote $(s,t)$ the unique positive integers such that
  $s\,q-t\,m=1$, $0\le t < q$. Let $z$ be the residue class of $Z$ in
  the ring $\Ai_P:=\Ai[Z]/(P(Z))$ and $\ell:=\deg(P)$. Then
  \begin{equation}\label{eq:anptransform}
    F(z^{t} x^{q},x^{m}(y+z^{s}))=x^{qm\ell N} U G,
  \end{equation}
  where $U,G\in \Ai_P[[x]][y]$, $U(0,0)\in \Ai_P^\times$ and $G$ is a
  Weierstrass polynomial of degree $N$ dividing $\dy$. Moreover, if
  $F\ne y^\dy$ and $F$ has no terms of degree $\dy-1$, then $N<\dy$.
\end{lem}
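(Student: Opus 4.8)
The plan is to substitute the change of variables directly into the edge-polynomial decomposition \eqref{eq:pseudoquasihom} and track what happens to each monomial, then isolate the Weierstrass factor via a Hensel-type argument. First I would record the arithmetic underlying the substitution: since $sq-tm=1$, a monomial $x^jy^i$ maps to $z^{ti+sj}\,x^{qj+mi}\,(1+\cdots)$-type contributions, so the lattice point $(i,j)$ on the lower edge $mi+qj=l$ contributes a term supported at $x$-exponent $l=qm\ell N$ (using $l=m\deg(\bar F)/\!\cdots$; more precisely, for the pure edge polynomial $\bar F$ one has $l = m q \ell N$ after accounting for $\deg(P)=\ell$, $N$, and the slope). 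Thus $x^{qm\ell N}$ factors out of $\bar F(z^t x^q, x^m(y+z^s))$ cleanly, and the higher-order terms of $F$ lying strictly above $\NP_0(F)$ contribute only to higher powers of $x$. This gives \eqref{eq:anptransform} with $U$ and $G$ a priori in $\Ai_P[[x]][y]$, and it remains to identify the two factors.

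Next I would compute the reduction modulo $x$. By construction $\bar F = \bigl(P(y^q/x^m)\,x^{m\ell}\bigr)^N$, and substituting $y\mapsto y+z^s$, $x\mapsto x^m$ (with the $z^t x^q$ in the first slot) and dividing by $x^{qm\ell N}$, the image of $\bar F$ modulo $x$ becomes a power of $P$ evaluated at a shifted argument. Writing $w = (y+z^s)^q / $ (appropriate monomial), the key point is that modulo $(x, P(Z))$ the quantity $y^q/x^m$ specializes so that $P(y^q/x^m)$ becomes, up to a unit, $y \cdot (\text{unit})$ plus higher terms when $q=1$, or is controlled by the condition $P(0)\in\Ai_P^\times$ when $q>1$; in either case the reduction mod $x$ of the right-hand side of \eqref{eq:anptransform} is $(\text{unit})\cdot y^N$. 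Since $P$ is square-free, $z$ is a simple root, so the factor carrying this $y^N$ is coprime (mod $x$) to the complementary factor whose constant term is a unit. Hensel's lemma over $\Ai_P[[x]][y]$ (legitimate since $\Ai_P$ is a product of fields, hence $\Ai_P[[x]]$ is a product of complete local rings) then produces the factorisation $x^{qm\ell N} UG$ with $U(0,0)\in\Ai_P^\times$ and $G$ Weierstrass of degree $N$. That $N\mid \dy$ is immediate from $q\deg(P)N = \dy$ when $q\mid\dy$ (Remark \ref{rem:testpseudodeg}), or more directly from the fact that $\deg_y \bar F = \dy$ forces $qN\ell = \dy$ (when the Newton polygon is straight) or the analogous bookkeeping along $\NP_0(F)$.

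For the final claim, suppose $F\neq y^\dy$ and $F$ has no term of degree $\dy-1$ in $y$. If $N=\dy$ then $q\ell = 1$, so $q=1$, $\ell=\deg(P)=1$, meaning $P = Z - c$ for some $c\in\Ai$, and then $\bar F = (y - cx^m)^\dy$ up to the conventions; the hypothesis $F\neq y^\dy$ together with squarefreeness of $F$ forces $m\geq 1$ and $c\neq 0$, but then $\bar F$ — hence $F$ — has a nonzero coefficient of $y^{\dy-1}$ (namely $-\dy c x^m$, nonzero since $\operatorname{char}\Ki\nmid\dy$), contradicting the hypothesis. Hence $N<\dy$.

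The main obstacle I anticipate is the bookkeeping in the reduction-mod-$x$ step when $q>1$: one must verify carefully that the condition $P(0)\in\Ai^\times$ is exactly what guarantees that no spurious factor of $y$ (equivalently, of $x^m$ inside the argument $y^q/x^m$) leaks in, so that the Hensel splitting genuinely separates a degree-$N$ Weierstrass part from a unit. The case $q=1$ is softer because the Newton polygon may have several edges, but then the argument $y/x^m$ substitution still sends the simple root $z$ of $P$ to a simple root of the reduced polynomial, and the Weierstrass preparation applies edge by edge; I would treat $q>1$ and $q=1$ together by phrasing everything in terms of the lower edge $\NP_0(F)$ and its slope $(q,m)$.
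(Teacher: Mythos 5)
Your proposal follows essentially the same route as the paper's proof: set $\tilde F(x,y):=F(z^t x^q,x^m(y+z^s))x^{-qm\ell N}$, observe that \eqref{eq:pseudoquasihom} makes this a well-defined element of $\Ai_P[[x]][y]$ with $\tilde F(0,y)$ equal (up to a unit) to $R(y)^N$ where $R(y)=P((y+z^s)^q/z^{tm})$, check $R(0)=P(z)=0$ and $R'(0)=qz^{1-s}P'(z)\in\Ai_P^\times$ (the invertibility of $z^{1-s}$ being the place the $q>1$ condition $P(0)\in\Ai^\times$ is used and the $q=1$ case has $s=1$), and then conclude by Weierstrass preparation; the derivation of $N\mid\dy$ from $q\ell N=\dy$ and the analysis of $N=\dy$ also match the paper. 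One small correction: your last paragraph invokes squarefreeness of $F$, which the lemma does not assume (only $P$ is squarefree) and which is not needed — for a Weierstrass $F$, $F\neq y^\dy$ already implies the lower edge is a genuine segment, hence $\bar F$ has at least two terms, forcing $\alpha\neq 0$.
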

\begin{proof}
  Let $\tilde{F}(x,y)=F(z^{t} x^{q},x^{m}(y+z^{s}))x^{-qm\ell N}$. We
  deduce from \eqref{eq:pseudoquasihom} that
  $\tilde{F}\in \Ai_P[[x]][y]$ and $\tilde{F}(0,y)=R(y)^N$ where
  $R(y)=P((y+z^s)^q/z^{tm})$. We have $R(0)=P(z)=0$ while
  $R'(0)=q z^{1-s}P'(0)$. As $P$ is square-free and the characteristic
  of $\Ai$ does not divide $\deg(P)$ by assumption, we have
  $P'(0)\in \Ai^\times$. As $q z^{1-s}\in \Ai_P^\times$ (if $q>1$, the
  assumption $P(0)\in \Ai^\times$ implies $P$ and $Z$ coprime, that is
  $z\in\Ai_P^\times$; if $q=1$, then $s=1$), it follows that
  $R'(0)\in \Ai_P^\times$. We deduce that $\tilde{F}(0,y)=y^N S(y)$
  where $y^N$ and $S(y)$ are coprime in $\Ai_P[y]$. We conclude thanks
  to the Weierstrass preparation theorem that $F$ factorises as in
  \eqref{eq:anptransform}. Note that $N|\dy$ by
  \eqref{eq:pseudoquasihom}. If $N=\dy$, then
  \eqref{eq:pseudoquasihom} forces $\bar{F}=(y+\alpha x^m)^\dy$ for
  some $\alpha\in \Ai$. If $\alpha = 0$, then $\NP(F)$ is reduced to a
  point and we must have $F=y^\dy$. If $\alpha \ne 0$, the coefficient
  of $y^{\dy-1}$ in $F$ is $\dy\alpha x^{m\,\dy}+h.o.t$, hence is non zero
  since the characteristic of $\Ai$ does not divide $\dy$.
\end{proof}

\begin{rem}\label{rem:q=1}
  As $P\in \Ai[Z]$ is square-free, the ring $\Ai_P=\Ai[Z]/(P(Z))$ is still
  isomorphic to a direct product of perfect fields thanks to the
  Chinese Reminder Theorem. Note also that $z^t$ is invertible: if
  $z$ is a zero divisor, we must have $q=1$ so that $t=0$ and $z^t=1$.
\end{rem}

\subsection{Pseudo-irreducible polynomials}\label{ssec:pseudoIrr}
The definition of a pseudo-irreducible polynomial is based on a
variation of the classical Newton-Puiseux algorithm.  Thanks to
Lemma \ref{lem:anptransform}, we associate to $F$ a sequence a
Weierstrass polynomials $H_0,\ldots,H_g$ of strictly decreasing
degrees $N_0,\ldots,N_g$ such that $H_k$ is pseudo-degenerated if
$k<g$ and such that either $H_g$ is not pseudo-degenerated either
$N_g=1$. We proceed recursively as follows:

$\bullet $ \textbf{Rank $k=0$}. Let $N_0=\dy$, $\Ki_0=\Ki$,
$c_0(x):=-\coef(F,y^{N_0-1})/N_0$ and
\begin{equation}\label{eq:H0}
  H_0(x,y):=F(x,y+c_0(x))\in \Ki_0[[x]][y].
\end{equation}
Then $H_0$ is a new Weierstrass polynomial of degree $N_0$ with no
terms of degree $N_0-1$. If $N_0=1$ or $H_0$ is not
pseudo-degenerated, we let $g=0$.

$\bullet $ \textbf{Rank $k>0$}. Suppose given $\Ki_{k-1}$ a direct
product of fields extension of $\Ki$ and
$H_{k-1}\in \Ki_{k-1}[[x]][y]$ a pseudo-degenerated Weierstrass
polynomial of degree $N_{k-1}>1$, with no terms of degree
$N_{k-1}-1$. Denote by $(q_k,m_k,P_k,N_k)$ its edge data and
$\ell_k:=\deg(P_k)$. As $P_k$ is square-free, the ring
$\Ki_k:=\Ki_{k-1}[Z_k]/(P_k(Z_k))$ is again (isomorphic to) a direct
product of fields. We let $z_k\in \Ki_k$ be the residue class of $Z_k$
and $(s_k,t_k)$ the unique positive integers such that
$s_k\,q_k-t_k\,m_k=1$, $0\le t_k < q_k$. As $H_{k-1}$ is
pseudo-degenerated, we deduce from Lemma \ref{lem:anptransform} that
\begin{equation}\label{eq:Gk}
  H_{k-1}(z_k^{t_k} x^{q_k},x^{m_k}(y+z_k^{s_k}))=x^{q_km_k\ell_k N_k} V_k G_k,
\end{equation}
where $V_k(0,0)\in \Ki_k^\times$ and $G_k\in \Ki_k[[x]][y]$ is a
Weierstrass polynomial of degree $N_k$.
Letting $c_k:=-\coef(G_k,y^{N_k-1})/N_{k}$, we define
\begin{equation}\label{eq:Hk}
  H_k(x,y)=G_k(x,y+c_k(x))\in \Ki_k[[x]][y].
\end{equation}
It is a degree $N_k$ Weierstrass polynomial with no terms of degree
$N_k-1$.

$\bullet$ \textbf{The $N_k$-sequence stops.} We have the relations
$N_k=q_k \ell_k N_{k-1}$. As $H_{k-1}$ is pseudo-degenerated with no
terms of degree $N_{k-1}-1$, we have $N_{k}<N_{k-1}$ by Lemma \ref{lem:anptransform}. Hence the sequence of integers $N_0,\ldots,N_k$
is strictly decreasing and there exists a smallest index $g$ such that
either $N_g=1$ (and $H_g=y$), either $N_g>1$ and $H_g$ is not
pseudo-degenerated. We collect the edge data of the polynomials
$H_0,\ldots,H_{g-1}$ in a list
\[
\D(F):=\left((q_1,m_1,P_1,N_1),\ldots,(q_{g},m_{g},P_{g},N_{g})\right).
\]
Note that $m_{k}>0$ for all $1\leq k \le g$. We include the $N_k$'s in
the list for convenience (they could be deduced from the remaining
data via the relations $N_{k}=N_{k-1}/ q_k\ell_k$).
\begin{dfn}\label{def:pseudo-irr}
  We say that $F$ is \emph{pseudo-irreducible} if $N_g=1$.
\end{dfn}

%%% Local Variables:
%%% mode: latex
%%% TeX-master: "equi-singularity"
%%% End:

\section{Pseudo-irreducible is equivalent to
  balanced.}\label{sec:equivalent}
We prove here our main result, Theorem \ref{thm:pseudo}: a square-free
Weierstrass polynomial $F\in \Ki[[x]][y]$ is pseudo-irreducible if and
only if it is balanced, in which case we compute characteristic
exponents and intersection sets of the irreducible factors.
\subsection{Notations and main results.}\label{ssec:notations}
We keep notations of Section \ref{sec:pseudo-irr}; in particular
$(q_1,m_1,P_1,N_1),\ldots,(q_g,m_g,P_g,N_g)$ denote the edge data of
$F$. We define $e_k:=q_1\cdots q_k$ (current index of ramification),
$e:=e_g$, $\hat{e}_{k}:=e/e_k$ and in an analoguous way
$f_k:=\ell_1\cdots \ell_k$ (current residual degree), $f:=f_g$ and
$\hat{f}_k:=f/f_k$. For all $k=1,\ldots,g$, we define
\begin{equation}\label{eq:BkMk}
  B_k=m_1 \hat{e}_1+\cdots +m_k\hat{e}_k
  \quad \textrm{and} \quad
  M_k=m_1 \hat{e}_0\hat{e_1}+\cdots+ m_k\hat{e}_{k-1}\hat{e}_k
\end{equation}
and we let $B_0=e$. These are positive integers related by the formula
\begin{equation}\label{eq:Mk}
  M_k=\sum_{i=1}^k(\hat{e}_{i-1}-\hat{e}_i)B_i +\hat{e}_k B_k.
\end{equation}
Note that $0 < B_1\le\cdots \le B_g$\footnote{We may allow $m_1=B_1=0$
  when considering non Weierstrass polynomials, see Subsection
  \ref{ssec:nonweirestrass}.} and $B_0\le B_g$. We have $B_0\le B_1$
if and only if $q_1\le m_1$, if and only if $F=0$ is not tangent to
the $x$-axis at the origin. We check easily that
$\hat{e}_k=gcd(B_0,\ldots,B_k)$. In particular,
$gcd(B_0,\ldots,B_g)=1$.
\begin{thm}\label{thm:pseudo}
  A Weierstrass polynomial $F\in \Ki[[x]][y]$ is balanced if and only
  if it is pseudo-irreducible. It such a case, $F$ has $f$ irreducible
  factors in $\algclos{\Ki}[[x]][y]$, all with degree $e$, and
  \begin{enumerate}
  \item $\C(F)=(B_0;B_k \, |\, q_k >1)$ - so $\C(F)=(1)$ if $q_k=1$ for
    all $k$.
  \item $\Gamma(F)=(M_k \, |\, \ell_k >1)$, where $M_k$ appears
    $\hat{f}_{k-1}-\hat{f}_{k}$ times.
  \end{enumerate}
\end{thm}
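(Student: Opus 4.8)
The plan is to prove Theorem~\ref{thm:pseudo} by induction on the length $g$ of the edge data sequence $\D(F)$, transporting the equisingularity data through the transformation of Lemma~\ref{lem:anptransform} one rank at a time. The base case is $g=0$: then either $N_0=1$, so $F$ (after the shift $H_0$) has degree one and is absolutely irreducible with $\C(F)=(1)$, $\Gamma(F)$ empty, and the formulas hold trivially with $e=f=1$; or $N_0>1$ and $H_0$ is not pseudo-degenerated, in which case I must show $F$ is \emph{not} balanced. For the inductive step, suppose $F$ is pseudo-degenerated with edge data $(q_1,m_1,P_1,N_1)$ and set $H:=H_1\in\Ki_1[[x]][y]$ as in~\eqref{eq:Gk}--\eqref{eq:Hk}, a Weierstrass polynomial of degree $N_1$ whose own edge data sequence is $\D(H)=((q_2,m_2,P_2,N_2),\ldots,(q_g,m_g,P_g,N_g))$. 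The key point is that the Puiseux series of $F$ are obtained from those of $H$ by the substitution $x\mapsto z_1^{t_1}x^{q_1}$, $y\mapsto x^{m_1}(y+z_1^{s_1})$, where $z_1$ ranges over the roots of $P_1$ in $\algclos{\Ki}$ (and each root of $P_1$ lifts, via the fixed isomorphism $\Ki_1\cong\prod$ of fields coming from the Chinese Remainder Theorem, to a collection of embeddings, giving $\ell_1$ ``branches'' of substitutions). This is exactly the standard one step of Newton--Puiseux, now carried out over a product of fields; the honest content is checking that distinct roots of $P_1$, together with the recursion inside $H$, produce \emph{all} absolutely irreducible factors of $F$ without collision, which is where square-freeness of $P_1$ and the unit conditions of Definition~\ref{def:pseudodeg} are used.

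Concretely, I would proceed as follows. First, unwind Lemma~\ref{lem:anptransform}: if $\tilde z$ is a root of $P_1$ in $\algclos{\Ki}$ and $(T^{N_1/?},\eta(T))$ is a Puiseux parametrisation of an absolutely irreducible factor $\tilde H$ of $H$ (over the field $\Ki_1$ localised at $\tilde z$), then $(z_1^{t_1}T^{q_1 N'}, T^{m_1 N'}(\eta(T)+z_1^{s_1}))$ — after the linear shift by $c_0$ — parametrises a factor of $F$; here $N'$ is the degree of $\tilde H$ and the total degree is $q_1 N'$. This shows $F$ has exactly (number of factors of $H$)$\times\ell_1$ absolutely irreducible factors, each of degree $q_1\times(\text{degree of corresponding factor of }H)$; by induction the factors of $H$ all have degree $e_g/q_1 = q_2\cdots q_g$ and there are $f/\ell_1$ of them, giving $f$ factors of $F$ each of degree $e$. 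Second, track characteristic exponents: if a factor $F_i$ of $F$ comes from a factor $H_j$ of $H$, then the valuation contributions shift by the substitution. The substitution multiplies the parametrising variable exponent by $q_1$ and adds a leading monomial $z_1^{s_1}x^{m_1}$; unwinding the definition of characteristic exponents shows that $\C(F_i)$ is obtained from $\C(H_j)$ by the ``inflation'' $\beta\mapsto q_1\beta$ together with one new exponent $B_1 = m_1\hat e_1 = m_1 q_2\cdots q_g$ inserted at the bottom — but this new exponent is genuinely characteristic (not absorbed) precisely when $q_1>1$, since otherwise $q_1=1$ contributes no new factor to the gcd. Iterating gives $\C(F)=(B_0; B_k \mid q_k>1)$ once one verifies $\hat e_k=\gcd(B_0,\ldots,B_k)$, which is the computation sketched just before the theorem. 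Third, track intersection multiplicities via the Noether--Merle formula~\eqref{eq:NMformula}: for two factors $F_i,F_{i'}$ of $F$, either they come from the same factor $H_j$ of $H$ (then their contact is ``deep'' and $(F_i,F_{i'})_0$ is computed by scaling the intersection number inside $H$ by $q_1$ and adding the common low-order part), or they come from distinct factors $H_j\ne H_{j'}$ with the \emph{same} leading data $z_1$ (contact order exactly controlled by $B_1$, giving $M_1$ up to the multiplicity bookkeeping), or they come from factors with distinct $z_1$-values (contact order $m_1\hat e_1$ at the very first edge, again governed by $B_1$). Counting how many pairs fall in each regime yields the multiplicity pattern ``$M_k$ appears $\hat f_{k-1}-\hat f_k$ times''.

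Finally, for the equivalence ``balanced $\iff$ pseudo-irreducible'': the forward direction of the hard part is the contrapositive — if at some rank $k\le g$ the polynomial $H_k$ has degree $>1$ and is \emph{not} pseudo-degenerated, I must exhibit two branches of $F$ with different $\C$ or different $\Gamma$. By Remark~\ref{rem:testpseudodeg}, failure of pseudo-degeneracy of $H_k$ means the boundary polynomial $Q$ of $H_k$ is not the $N$-th power of a single square-free polynomial with the required unit condition; one then shows that the Newton--Puiseux step applied to $H_k$ splits off factors whose leading exponents or leading coefficients differ, and by the already-established formulas these differences propagate to $\C$ or $\Gamma$ of the corresponding branches of $F$ (using that $B_1\le\cdots\le B_g$ and $\hat e$, $\hat f$ are strictly controlled so no later step can ``repair'' an early discrepancy). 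The converse direction — $N_g=1$ implies balanced — is exactly what the explicit formulas in items (1)--(2) deliver: since $\C(F_i)$ and $\Gamma_i(F)$ as computed depend only on the edge data $\D(F)$ and not on the index $i$, all branches share the same data. I expect the main obstacle to be the second regime in the intersection-multiplicity bookkeeping: carefully justifying, over a \emph{product} of fields rather than a single field, that two branches arising from distinct irreducible factors of $P_k$ (same rank, same ``slope'') have contact order exactly $B_k$ and not more — this requires the square-freeness of $P_k$ to guarantee the relevant residues are distinct, and it is the place where the non-standard valuation inequality $\val(S_1S_2)\ge\val(S_1)+\val(S_2)$ on $\Ai[[x]]$ must be handled with care so that no spurious cancellation inflates the contact.
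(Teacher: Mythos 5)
Your plan takes a genuinely different route from the paper (which works globally with the composed parametrisation $\pi_g(T,0)$, Proposition~\ref{prop:CPE} and the separation Lemma~\ref{lem:abzeta}, rather than by induction on $g$), and the forward half --- pseudo-irreducible implies balanced with the stated formulas --- could plausibly be pushed through rank by rank. But even there two concrete problems appear. First, your transformation rule for characteristic exponents is wrong: under the substitution $x\mapsto z_1^{t_1}x^{q_1}$, $y\mapsto x^{m_1}(y+z_1^{s_1})$, only $\beta_0$ is multiplied by $q_1$; the higher exponents of the factor of $H$ are \emph{shifted} by $B_1=m_1\hat e_1$, not multiplied by $q_1$ (this is exactly why $B_k=m_1\hat e_1+\cdots+m_k\hat e_k$). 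For instance with $q_1=q_2=2$, $m_1=m_2=1$ the correct exponents are $(4;2,3)$, while your rule ``$\beta\mapsto q_1\beta$ plus insert $B_1$'' gives $(4;2,2)$. Second, your induction hypothesis must be formulated for polynomials over a direct product of fields (the $H_k$ live over $\Ki_k$), or else per component with matching edge data; this is not innocuous, since pseudo-degeneracy is a joint condition over the product and, when $q=1$, the joint lower edge need not be the componentwise lower edge. The paper sidesteps all of this by delaying every evaluation to rank $g$ and indexing branches by $\zeta\in W$.

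The genuine gap, however, is the direction balanced $\Rightarrow$ pseudo-irreducible, which you reduce to: ``if $H_g$ is not pseudo-degenerated, the Newton--Puiseux step splits off factors whose leading exponents or leading coefficients differ, and these differences propagate to $\C$ or $\Gamma$.'' That inference is not valid: branches whose leading coefficients (residual roots $\alpha$) differ can perfectly well be equisingular with identical intersection sets --- this is the whole point of pseudo-irreducibility, cf.\ $(y-x)(y+x^2)$ or $(y^2-x^3)(y^2+x^3)$ --- so exhibiting distinct leading data proves nothing by itself. What is actually needed (and what the paper does in Subsection~\ref{ssec:EqSingToPIrr}) is: (a) balanced forces all branches of $F$, hence all factors of all $H_\zeta$, to have the same degree, which rules out a vertex in $\bar H_g$ and yields the form $Q(y^q/x^m)x^{m\deg Q}$ (Lemma~\ref{lem:samedeg}, Corollary~\ref{cor:Hg0}); (b) a classification of the branches of $F$ by pairs $(\zeta,\alpha)$ with $Q_\zeta(\alpha)=0$ (Lemma~\ref{lem:Gparam}); (c) a \emph{counting} argument: equality of the multisets $\Gamma_i(F)$ forces all fibres $I_{\zeta,\alpha}$ to have the same cardinality, hence all roots of all $Q_\zeta$ the same multiplicity, i.e.\ $Q=P^N$ with $P$ square-free (Corollary~\ref{cor:interset}, Proposition~\ref{prop:equisingpseudoirr}); and (d) when $q>1$, equality of the \emph{next} characteristic exponent forces every $\alpha\ne 0$, i.e.\ $P(0)\in\Ki_g^\times$ (Corollary~\ref{cor:expcarac}). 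Steps (c) and (d) are the crux of the converse and are absent from your proposal; without them the contrapositive you sketch does not go through.
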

Taking into account repetitions, the intersection set has cardinality
$\sum_{k=1}^g (\hat{f}_{k-1}-\hat{f}_{k})=f-1$, as required. Of
course, it is empty if and only if $F$ is absolutely
irreducible.
\begin{cor}\label{cor:disc}
  Let $F\in \Ki[[x]][y]$ be a balanced Weierstrass polynomial. Then,
  the discriminant of $F$ has valuation
  \[
  \vF= f \left(\sum_{\ell_k > 1}
  (\hat{f}_{k-1}-\hat{f}_{k})M_k+\sum_{q_k >1}
  (\hat{e}_{k-1}-\hat{e}_{k})B_k\right)
  \]
  and the discriminants of the absolutely irreducible factors of $F$
  all have the same valuation
  $\sum_{q_k >1} (\hat{e}_{k-1}-\hat{e}_{k})B_k$.
\end{cor}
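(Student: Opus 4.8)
The plan is to decompose the $x$-valuation of the discriminant along the absolutely irreducible factors of $F$ and then feed in the data produced by Theorem~\ref{thm:pseudo}. Since $F$ is Weierstrass it is monic in $y$, so $\disc(F)=\pm\Res_y(F,F')$ and $\vF=\val(\disc(F))$. By Theorem~\ref{thm:pseudo}, $F$ is pseudo-irreducible and factors as $F=F_1\cdots F_f$ into $f$ distinct absolutely irreducible monic factors, each of degree $e$ (and automatically Weierstrass since $F$ is). The classical multiplicativity formula for discriminants of monic polynomials, $\disc(GH)=\disc(G)\disc(H)\Res_y(G,H)^2$, iterated over the $F_i$, gives
\[
\disc(F)=\prod_{i=1}^{f}\disc(F_i)\cdot\prod_{1\le i<j\le f}\Res_y(F_i,F_j)^2 ,
\]
and, since $\algclos{\Ki}[[x]]$ is an integral domain, no cancellation occurs in passing to valuations; using the very definition \eqref{eq:intmult} of the intersection multiplicity I get
\[
\vF=\sum_{i=1}^{f}\val\bigl(\disc(F_i)\bigr)+2\sum_{i<j}(F_i,F_j)_0 .
\]
So two things remain: to evaluate $\val(\disc(F_i))$ for a single branch, and to evaluate the double sum of intersection multiplicities.

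For the branch discriminant I would first recall the classical formula: if $G$ is an irreducible Weierstrass polynomial of degree $\beta_0$ with characteristic exponents $(\beta_0;\beta_1,\ldots,\beta_h)$, then
\[
\val\bigl(\disc(G)\bigr)=\sum_{k=1}^{h}(E_{k-1}-E_k)\,\beta_k,\qquad E_k=gcd(\beta_0,\ldots,\beta_k).
\]
This is standard (see e.g.\ \cite{Ca00}), and also follows from a short computation: writing the conjugate Puiseux roots of $G$ as $y=S(\zeta\,x^{1/\beta_0})$ with $\zeta^{\beta_0}=1$, $S(T)=\sum c_iT^i$, one has $\val(\disc(G))=\sum_{\zeta\ne 1}\min\{i:\ c_i\ne0,\ \zeta^i\ne1\}$, and exactly $E_{k-1}-E_k$ of the roots of unity (those of order dividing $E_{k-1}$ but not $E_k$) contribute the value $\beta_k$. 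Since $F$ is balanced, Theorem~\ref{thm:pseudo} tells us that every $F_i$ has the same characteristic exponents $\C(F)=(B_0;\,B_k\mid q_k>1)$; in particular all the $\disc(F_i)$ share the same valuation, which is already the last assertion of the corollary. To bring this valuation into the stated shape I would list the indices with $q_k>1$ as $k_1<\cdots<k_{g'}$ and set $k_0=0$: inserting an index $k$ with $q_k=1$ into a gcd of the $B$'s changes nothing, because $q_k=1$ forces $e_k=e_{k-1}$ and hence $\hat e_{k-1}=\hat e_k$; therefore $gcd(B_0,B_{k_1},\ldots,B_{k_j})=\hat e_{k_j}$ and $\hat e_{k_{j-1}}=\hat e_{k_j-1}$. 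Substituting into the formula above makes the sum collapse to $\sum_{q_k>1}(\hat e_{k-1}-\hat e_k)B_k$, which equals $\sum_{k=1}^{g}(\hat e_{k-1}-\hat e_k)B_k$ since the terms with $q_k=1$ vanish. Hence $\sum_i\val(\disc(F_i))=f\sum_{q_k>1}(\hat e_{k-1}-\hat e_k)B_k$.

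For the double sum I use balancedness once more: by Theorem~\ref{thm:pseudo}, for every $i$ one has $\Gamma_i(F)=\Gamma(F)=(M_k\mid \ell_k>1)$ with $M_k$ repeated $\hat f_{k-1}-\hat f_k$ times, so $\sum_{j\ne i}(F_i,F_j)_0=\sum_{\ell_k>1}(\hat f_{k-1}-\hat f_k)M_k$ for each $i$. Summing over the $f$ factors and dividing by $2$ gives $\sum_{i<j}(F_i,F_j)_0=\frac{f}{2}\sum_{\ell_k>1}(\hat f_{k-1}-\hat f_k)M_k$. Plugging the two pieces back into the expression for $\vF$ yields
\[
\vF=f\Bigl(\sum_{\ell_k>1}(\hat f_{k-1}-\hat f_k)M_k+\sum_{q_k>1}(\hat e_{k-1}-\hat e_k)B_k\Bigr),
\]
as claimed.

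The only genuinely non-combinatorial ingredient is the discriminant-valuation formula for an irreducible branch in terms of its characteristic exponents; I expect this (together with checking that inserting $q_k=1$ or $\ell_k=1$ indices leaves the relevant gcd's unchanged, so that the $\C(F)$ and $\Gamma(F)$ of Theorem~\ref{thm:pseudo} match the $\hat e_k$, $\hat f_k$ bookkeeping) to be the main point, while the rest is routine manipulation using $e_k=e_{k-1}q_k$ and $f_k=f_{k-1}\ell_k$. The one subtlety worth stating explicitly is the absence of cancellation when taking the valuation of the product: it holds because $\algclos{\Ki}[[x]]$ is a domain and $F$ is square-free, so every $\disc(F_i)$ and every $\Res_y(F_i,F_j)$ is a nonzero power series.
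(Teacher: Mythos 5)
Your proof is correct and follows essentially the same route as the paper: factor $F$ into its $f$ conjugate branches, apply the multiplicativity of the discriminant to split $\vF$ into branch discriminants plus intersection numbers, then read off both contributions from Theorem~\ref{thm:pseudo}. The only real difference is that for the branch discriminant valuation the paper simply cites Wall (combined with point~1 of Theorem~\ref{thm:pseudo}) to assert that for each fixed root $y_a$ the multiset $(\val(y_a-y_b))_{b\ne a}$ consists of $B_k/e$ with multiplicity $\hat e_{k-1}-\hat e_k$, whereas you re-derive this from scratch by the roots-of-unity count and then carry out the re-indexing from $(\beta_j,E_j)$ to $(B_k,\hat e_k)$; both are fine, and your check that inserting indices with $q_k=1$ (resp.\ $\ell_k=1$) leaves the gcd bookkeeping (resp.\ the multiplicity bookkeeping) unchanged is exactly the point one must verify.
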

\begin{proof} \textit{$($of Corollary \ref{cor:disc}$)$} When $F$
  is balanced, it has $f$ irreducible factors $F_1,\ldots,F_f$ of
  same degree $e$, with discriminant valuations say
  $\vF_1,\ldots,\vF_f$. The multiplicative property of the
  discriminant gives the well-known formula
  \begin{equation}\label{eq:disc}
    \vF=\sum_{1\le i \le f} \vF_i + \sum_{1\le i\ne j\le f}(F_i,F_j)_0.
  \end{equation}
  Let $y_1,\ldots,y_e$ be the roots of $F_i$. Thanks to
  \cite[Proposition 4.1.3 (ii)]{Wa04} combined with point 1 of Theorem
  \ref{thm:pseudo}, we deduce that for each fixed $a=1,\ldots,e$, the
  list $(\val(y_a-y_b),b\ne a)$ consists of the values $B_k/e$
  repeated $\hat{e}_{k-1}-\hat{e}_{k}$ times for $k=1,\ldots,g$. Since
  $\vF_i=\sum_{1\le a \ne b \le e} \val(y_a-y_b)$, we deduce that
  $\vF_1=\cdots=\vF_f=\sum_{q_k >1} (\hat{e}_{k-1}-\hat{e}_{k})B_k$. The
  formula for $\vF$ follows directly from
  \eqref{eq:disc} combined with point 2 of Theorem \ref{thm:pseudo}.
\end{proof}
The remaining part of this section is dedicated to the proof of
Theorem \ref{thm:pseudo}. It is quite technical, but has the advantage
to be self-contained. We first establish the relations between the
(pseudo)-rational Puiseux expansions and the classical Puiseux series
of $F$ (Subsection \ref{ssec:pseudoRNP}). This allows us to deduce the
characteristic exponents and the intersection sets of a
pseudo-irreducible polynomial (thanks to Noether-Merle's formula),
proving in particular that pseudo-irreducible implies balanced
(Subsection \ref{ssec:PIrrToEqSing}).  We prove the more delicate
reverse implication in Subsection \ref{ssec:EqSingToPIrr}.

\subsection{Pseudo-rational Puiseux expansion.}\label{ssec:pseudoRNP}
Keeping notations of Section \ref{sec:pseudo-irr}, let
$\pi_0(x,y)=(x,y+c_0(x))$ and $\pi_k=\pi_{k-1}\circ \sigma_k$ where
\begin{equation}\label{eq:tausigma}
  \sigma_k(x,y):=(z_k^{t_k} x^{q_k},x^{m_k}(y+z_k^{s_k}+c_k(x)))
\end{equation}
for $k\ge 1$. It follows from equalities \eqref{eq:H0}, \eqref{eq:Gk}
and \eqref{eq:Hk} that
\begin{equation}\label{eq:pikHk}
\pi_k^* F= U_k H_k\in \Ki_k[[x,y]]
\end{equation}
for some $U_k$ such that $U_k(0,0)\in \Ki_k^\times$. We deduce from
\eqref{eq:tausigma} that
\begin{equation}\label{eq:pikxy}
  \pi_k (x, y ) = (\mu_k x^{e_k},\alpha_k x^{r_k} y + S_{k} (x)),
\end{equation}
where $\mu_k,\alpha_k \in \Ki_k^{\times}$, $r_k\in \Ni$ and
$S_k \in \Ki_k[[x]]$ satisfies $\val(S_k)\le r_k$. Following
\cite{PoWe17}, we call the parametrisation
\[
(\mu_k T^{e_k},S_k(T)):=\pi_k(T,0)
\]
a pseudo-rational Puiseux expansion (pseudo-RPE for short). Its ring
of definition equals the current residue ring $\Ki_k$, which is a
reduced zero-dimensional $\Ki$-algebra of degree $f_k$ over
$\Ki$. When $F$ is irreducible, the $\Ki_k$'s are fields and the
parametrisation $\pi_k(T,0)$ allows to compute the Puiseux series of
$F$ truncated up to precision $\frac{r_k}{e_k}$, which increases with
$k$ \cite[Section 3.2]{PoWe17}. We show here that the same conclusion
holds when $F$ is pseudo-irreducible, taking care of possible
zero-divisors in $\Ki_k$. To this aim, we prove by induction an
explicit formula for $\pi_k(T,0)$. We need further notations.

\textit{Exponents data.} For all $0\le i\le k \le g$, we define
$Q_{k,i}=q_{i+1}\cdots q_k$ with convention $Q_{k,k}=1$ and let
\[
  B_{k,i}=m_{1} Q_{k,1}+\cdots+m_i Q_{k,i}
\]
with convention $B_{k,0}=0$. Note that $Q_{i,0}=e_i$,
$Q_{g,i}=\hat{e}_i$ and $B_{g,i}=B_i$ for all $i\leq g$. We have the
relations $Q_{k+1,i}=q_{k+1}Q_{k,i} $ and $B_{k+1,i}=q_{k+1} B_{k,i}$
for all $i\le k$ and $B_{k+1,k+1}=q_{k+1} B_{k,k} + m_{k+1}$.

\textit{Coefficients data.} For all $0\le i\le k \le g$, we define
$ \mu_{k,i}:=z_{i+1}^{t_{i+1}Q_{i,i}}\cdots z_{k}^{t_{k}Q_{k-1,i}} $
with convention $\mu_{k,k}=1$ and let
\[
  \alpha_{k,i}:=\mu_{k,1}^{m_1}\cdots \mu_{k,i}^{m_i},
\]
with convention $\alpha_{k,0}=1$.  We have
$\mu_{k+1,i}=\mu_{k,i}z_{k+1}^{t_{k+1} Q_{k,i}}$ and
$\alpha_{k+1,i}=\alpha_{k,i}z_{k+1}^{t_{k+1}B_{k,i}}$ for all
$1\le i\le k$, and $\alpha_{k+1,k+1}=\alpha_{k+1,k}$.

\begin{lem}\label{lem:pik}
  Let $z_0=0$ and $s_0=1$. For all $k=0,\ldots,g$, we have the formula
  \[
  \pi_k(x,y)=\left(\mu_{k,0} x^{e_k}, \sum_{i=0}^{k} \alpha_{k,i}
  x^{B_{k,i}}\left(z_i^{s_i}+c_i\left(\mu_{k,i}
  x^{Q_{k,i}}\right)\right)+\alpha_{k,k} x^{B_{k,k}} y\right).
  \]
\end{lem}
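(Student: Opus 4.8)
The plan is to prove Lemma \ref{lem:pik} by induction on $k$, using the recursive definition $\pi_k = \pi_{k-1}\circ\sigma_k$ together with the recursion relations already collected for the exponents data ($Q_{k,i}, B_{k,i}$) and the coefficients data ($\mu_{k,i}, \alpha_{k,i}$).

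\textbf{Base case.} For $k=0$ we have $\pi_0(x,y)=(x,y+c_0(x))$ by definition. With the conventions $z_0=0$, $s_0=1$, $e_0=1$, $Q_{0,0}=1$, $B_{0,0}=0$, $\mu_{0,0}=1$, $\alpha_{0,0}=1$, the claimed formula reads $\pi_0(x,y)=(x,\,\alpha_{0,0}x^{0}(z_0^{1}+c_0(\mu_{0,0}x^{1}))+\alpha_{0,0}x^{0}y)=(x,\,0+c_0(x)+y)$, which matches. (Here I use that $z_0^{s_0}=0$.)

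\textbf{Inductive step.} Assume the formula holds for $k-1$. By definition $\pi_k(x,y)=\pi_{k-1}(\sigma_k(x,y))=\pi_{k-1}(z_k^{t_k}x^{q_k},\,x^{m_k}(y+z_k^{s_k}+c_k(x)))$. I substitute $(X,Y):=(z_k^{t_k}x^{q_k},\,x^{m_k}(y+z_k^{s_k}+c_k(x)))$ into the degree-$(k-1)$ formula. The first coordinate becomes $\mu_{k-1,0}X^{e_{k-1}}=\mu_{k-1,0}z_k^{t_k e_{k-1}}x^{q_k e_{k-1}}=\mu_{k,0}x^{e_k}$, using $e_k=q_k e_{k-1}$ and $\mu_{k,0}=\mu_{k-1,0}z_k^{t_k Q_{k-1,0}}$ with $Q_{k-1,0}=e_{k-1}$. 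For the second coordinate I expand the sum $\sum_{i=0}^{k-1}\alpha_{k-1,i}X^{B_{k-1,i}}(z_i^{s_i}+c_i(\mu_{k-1,i}X^{Q_{k-1,i}}))+\alpha_{k-1,k-1}X^{B_{k-1,k-1}}Y$. Each power $X^{B_{k-1,i}}=z_k^{t_k B_{k-1,i}}x^{q_k B_{k-1,i}}$, and $q_k B_{k-1,i}=B_{k,i}$ for $i\le k-1$, while $\alpha_{k-1,i}z_k^{t_k B_{k-1,i}}=\alpha_{k,i}$; also $\mu_{k-1,i}X^{Q_{k-1,i}}=\mu_{k-1,i}z_k^{t_k Q_{k-1,i}}x^{q_k Q_{k-1,i}}=\mu_{k,i}x^{Q_{k,i}}$. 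So the terms $i=0,\ldots,k-1$ of the degree-$(k-1)$ formula turn exactly into the terms $i=0,\ldots,k-1$ of the degree-$k$ formula. It remains to handle the last term $\alpha_{k-1,k-1}X^{B_{k-1,k-1}}Y$. Writing $Y=x^{m_k}(y+z_k^{s_k}+c_k(x))$ and $X^{B_{k-1,k-1}}=z_k^{t_k B_{k-1,k-1}}x^{q_k B_{k-1,k-1}}$, this term equals $\alpha_{k-1,k-1}z_k^{t_k B_{k-1,k-1}}\,x^{q_k B_{k-1,k-1}+m_k}\,(y+z_k^{s_k}+c_k(x))$. Using $B_{k,k}=q_k B_{k-1,k-1}+m_k$, $B_{k,k-1}$ only matters through $\alpha_{k,k-1}=\alpha_{k-1,k-1}z_k^{t_k B_{k-1,k-1}}$ (since $B_{k,k-1}=q_k B_{k-1,k-1}$) and $\alpha_{k,k}=\alpha_{k,k-1}$, this becomes $\alpha_{k,k}x^{B_{k,k}}(z_k^{s_k}+c_k(x))+\alpha_{k,k}x^{B_{k,k}}y$. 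Finally I note $c_k(x)=c_k(\mu_{k,k}x^{Q_{k,k}})$ since $\mu_{k,k}=1$ and $Q_{k,k}=1$, so this is precisely the $i=k$ term plus the trailing $\alpha_{k,k}x^{B_{k,k}}y$ of the degree-$k$ formula. Summing everything gives the claimed identity for $\pi_k$.

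\textbf{Main obstacle.} There is no deep difficulty: the proof is a bookkeeping exercise, and the only real care needed is in tracking the indices in the coefficient recursions — in particular making sure the $z_k$-powers carried by the monomial substitution land correctly on $\alpha_{k,i}$ and $\mu_{k,i}$ rather than being double-counted, and correctly splitting the old leading term ($i=k-1$ carrying $Y$) into the new $i=k$ term plus the new leading term ($i=k$ carrying $y$). I would state the needed recursions $Q_{k,i}=q_k Q_{k-1,i}$, $B_{k,i}=q_k B_{k-1,i}$ (for $i<k$), $B_{k,k}=q_k B_{k-1,k-1}+m_k$, $\mu_{k,i}=\mu_{k-1,i}z_k^{t_k Q_{k-1,i}}$, $\alpha_{k,i}=\alpha_{k-1,i}z_k^{t_k B_{k-1,i}}$, $\alpha_{k,k}=\alpha_{k,k-1}$ at the outset (all already recorded in the paper) so that the substitution computation is a direct match. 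I would also remark that invertibility of the $z_k^{t_k}$ (Remark \ref{rem:q=1}) is what makes the reparametrisation meaningful, though it is not strictly needed for the algebraic identity itself.
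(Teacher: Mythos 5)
Your proof is correct and takes exactly the same approach as the paper, which also argues by induction on $k$ from $\pi_k=\pi_{k-1}\circ\sigma_k$ using the recursive relations for $Q_{k,i}$, $B_{k,i}$, $\mu_{k,i}$, $\alpha_{k,i}$ (the paper simply states this one-line sketch without carrying out the bookkeeping). Your version writes out the substitution in full and correctly tracks how the old leading $Y$-term splits into the new $i=k$ summand plus the new trailing $y$-term via $\mu_{k,k}=1$, $Q_{k,k}=1$, $\alpha_{k,k}=\alpha_{k,k-1}$, so it is a faithful and complete elaboration of the intended argument.
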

\begin{proof}
  This is correct for $k=0$: the formula becomes
  $\pi_0(x,y)=(x,y+c_0(x))$. For $k>0$, we conclude by induction,
  using the recursives relations for $B_{k,i}$, $\mu_{k,i}$ and
  $\alpha_{k,i}$ above with the definition
  $\pi_{k}(x,y) = \pi_{k-1}(z_{k}^{t_k} x^{q_k} , x^{m_k}
  (z_k^{s_k}+c_{k}(x)+y))$.
\end{proof}

Given $\alpha$ an element of a ring $\Li$, we denote by $\alpha^{1/e}$
the residue class of $Z$ in $\Li[Z]/(Z^e-\alpha)$. For all
$k=0,\ldots,g$, we define the ring extension
\[
  \Li_k:=\Ki_k\big[z_1^{\,\frac{1}{e}},\ldots,z_k^{\,\frac{1}{e}}\big].
\]
Note that $\Li_0=\Ki$. Moreover, since $z_k^{1/e}$ has degree
$e\ell_k>1$ over $\Li_{k-1}$, the natural inclusion
$\Li_{k-1}\subset \Li_{k}$ is \emph{strict}.

\begin{rem}\label{rem:zerodiv}
  Note that $\theta_k:=\mu_{k,0}^{-1/e_k}$ is a well defined
  invertible element of $\Li_k$ (use Remark \ref{rem:q=1}), which by
  Lemma \ref{lem:pik} plays an important role in the connections
  between pseudo-RPE and Puiseux series (proof of Proposition
  \ref{prop:CPE} below). In fact, we could replace $\Li_k$ by the subring $\Ki_k[\theta_k]$ of sharp degree $e_k f_k$ over
  $\Ki$, see \cite{PoWeBis19}. We use $\Li_k$ for convenience,
  especially since $z_k^{1/q_k}$ might not lie in $\Ki_k[\theta_k]$.
  The key points are: $\theta_k\in \Li_k$ and the inclusion
  $\Li_{k-1}\subset \Li_k$ is strict.
\end{rem}

\begin{prop}\label{prop:CPE}
  Let $F\in \Ki[[x]][y]$ be Weierstrass and consider
  $\tilde{S}:=S(\mu^{-1/e}T)$, where $(\mu\,
  T^e,S(T)):=\pi_g(T,0)$. We have
  \[
  \tilde{S}(T)=\sum_{B > 0} a_B T^{B}\in \Li_g[[T]],
  \]
  where $gcd(B_0,\ldots,B_k)|B$ and $a_B\in \Li_k$ for all
  $B< B_{k+1}$ (with convention $B_{g+1}:=+\infty$). Moreover, we have
  for all $1\le k \le g$
  \begin{equation}\label{eq:aBk}
    a_{B_k}=
    \begin{cases}
      \varepsilon_{k} z_k^{\frac{1}{q_k}} \quad 
      \qquad \text{if}\quad  q_k>1 \\%
      \varepsilon_{k} z_k +\rho_k \quad \,\, \,\text{if} \quad q_k=1 
    \end{cases}
  \end{equation}
  where $\varepsilon_{k}\in\Li_{k-1}^\times$ and
  $\rho_{k}\in \Li_{k-1}$. In particular
  $a_{B_k}\in \Li_{k}\setminus \Li_{k-1}$.
\end{prop}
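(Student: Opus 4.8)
The plan is to prove Proposition~\ref{prop:CPE} by induction on $k$, tracking the series $S_k(x)$ defining the pseudo-RPE $\pi_k(T,0)=(\mu_{k,0}T^{e_k},S_k(T))$ through the explicit formula of Lemma~\ref{lem:pik}. First I would set $\tilde S_k(T):=S_k(\mu_{k,0}^{-1/e_k}T)$, so that reparametrising $x\mapsto \theta_k x = \mu_{k,0}^{-1/e_k}x$ turns the monomial $x^{e_k}$ into $T^{e_k}$ and the monomials $x^{B_{k,i}}$ into $\theta_k^{B_{k,i}}x^{B_{k,i}}$; the point of passing to $\Li_k$ (Remark~\ref{rem:zerodiv}) is precisely that $\theta_k\in\Li_k^\times$. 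The final statement will follow by taking $k=g$, since $B_{g,i}=B_i$ and $\mu=\mu_{g,0}$. Throughout, I would use that by construction $H_k$ has no term of degree $N_k-1$, equivalently $c_k$ was chosen to kill that coefficient, and that $\val(c_k)\ge 1$ since $G_k$, hence $H_k$, is Weierstrass.

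The core of the induction is the recursion $\pi_k = \pi_{k-1}\circ\sigma_k$ with $\sigma_k(x,y)=(z_k^{t_k}x^{q_k},x^{m_k}(y+z_k^{s_k}+c_k(x)))$. Evaluating at $y=0$ and using the induction hypothesis on $S_{k-1}$, the new series $S_k$ is obtained from $S_{k-1}$ by the substitution $x\mapsto z_k^{t_k}x^{q_k}$ followed by adding the fresh contribution $\alpha_{k,k-1}^{m_k}\cdots$ — more cleanly, I would read off directly from Lemma~\ref{lem:pik} that
\[
\tilde S_k(T)=\sum_{i=0}^{k}\alpha_{k,i}\,\theta_k^{B_{k,i}}\,T^{B_{k,i}}\bigl(z_i^{s_i}+c_i(\mu_{k,i}\theta_k^{-Q_{k,i}}T^{Q_{k,i}})\bigr).
\]
The exponents appearing are $B_{k,i}+Q_{k,i}\cdot(\text{positive integers from }\supp(c_i))$; since $\val(c_i)\ge 1$ these are $\ge B_{k,i}+Q_{k,i}$, and one checks $B_{k,i}+Q_{k,i}$ is already $\ge B_{k,i+1}$ when $m_{i+1}\le q_{i+1}$ — but in general I only need the coarse bound that every exponent coming from the $i$-th summand is a positive multiple of $\hat e_{\text{(stage reached)}}$ plus $B_{k,i}$, together with the divisibility facts $\hat e_k=\gcd(B_0,\ldots,B_k)$ already recorded before the theorem. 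The divisibility claim $\gcd(B_0,\ldots,B_k)\mid B$ for the part of $\tilde S$ of valuation $<B_{k+1}$, and the membership $a_B\in\Li_k$ there, then follow because all exponents $<B_{k+1}$ in $\tilde S$ already occur in $\tilde S_k$ (later stages only add terms of strictly larger valuation, as $B_{k+1,i}=q_{k+1}B_{k,i}$ grows and the genuinely new exponent at stage $k+1$ is $B_{k+1,k+1}\ge B_{k+1}$), and the coefficients of $\tilde S_k$ manifestly lie in $\Li_k$.

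For the explicit value of $a_{B_k}$ I would isolate, among all exponents of $\tilde S_k$ equal to $B_k=B_{g,k}$ — or rather, run the argument at stage $k$ itself where the relevant exponent is $B_{k,k}$ — the unique ``new'' contribution coming from the $i=k$ summand, namely $\alpha_{k,k}\theta_k^{B_{k,k}}T^{B_{k,k}}(z_k^{s_k}+c_k(\cdots))$, and argue that the constant term $z_k^{s_k}$ dominates: any other exponent equal to $B_{k,k}$ must come from a summand $i<k$ with a term of $c_i$, whose coefficient lies in $\Li_{k-1}$. With $\alpha_{k,k}=\alpha_{k,k-1}\in\Li_{k-1}$ up to the factor $\theta_k^{B_{k,k}}=\mu_{k,0}^{-B_{k,k}/e_k}\in\Li_k^\times$, I would then split on $q_k$: when $q_k>1$, using $s_kq_k-t_km_k=1$ one rewrites $z_k^{s_k}$ times the accumulated powers of $z_k$ hidden in $\theta_k$ and $\mu$ as $z_k^{1/q_k}$ up to a unit of $\Li_{k-1}$ (here the assumption $P_k(0)\in\Ai^\times$, i.e.\ $z_k$ invertible, and the Bézout relation are what make $z_k^{1/q_k}$ land correctly in $\Li_k$); when $q_k=1$ one has $s_k=1,t_k=0$, so $z_k^{s_k}=z_k$ contributes linearly and the $c_i$-terms with $i<k$ contribute an element $\rho_k\in\Li_{k-1}$. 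The membership $a_{B_k}\in\Li_k\setminus\Li_{k-1}$ then follows from $z_k^{1/q_k}\notin\Li_{k-1}$ (resp.\ $z_k\notin\Li_{k-1}$), which is exactly the strictness of $\Li_{k-1}\subset\Li_k$ noted before the proposition. The main obstacle I anticipate is the bookkeeping of units: carefully showing that after reparametrisation by $\theta_k$ the coefficient $a_{B_k}$ is a $\Li_{k-1}^\times$-multiple of $z_k^{1/q_k}$ (and not merely of some fractional power tangled with $\mu_{k,i}$'s), which requires using $s_kq_k-t_km_k=1$ to collapse the exponents of $z_k$ modulo $e$ — I would do this by an explicit valuation-of-exponents computation rather than trying to be slick.
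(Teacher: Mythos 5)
Your proposal follows essentially the same route as the paper: apply Lemma~\ref{lem:pik}, rescale by $\theta_k$, and use the relations $\mu_{k,i}\,\theta_k^{Q_{k,i}}=\theta_i$ (note the sign: you wrote $\theta_k^{-Q_{k,i}}$ but it should be $\theta_k^{+Q_{k,i}}$, since $\theta_k=\mu_{k,0}^{-1/e_k}$ already carries the inverse) and $\alpha_{k,i}\theta_k^{B_{k,i}}=\prod_{j\le i}\theta_j$ to collapse the coefficients into $\Li_k$, then identify $z_k^{s_k}\prod_j\theta_j^{m_j}$ as $\varepsilon_k z_k^{1/q_k}$ via B\'ezout. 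The paper works directly at rank $g$ and analyses the $k$-th summand, whereas you organise it as an induction through $\tilde{S}_k$; these are interchangeable.

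One small point you glide over: when $q_k>1$ you need $\rho_k=0$ exactly, i.e.\ no contribution to $a_{B_k}$ from the summands $i<k$, and your sketch doesn't explain why. The paper's argument is worth spelling out: the $i$-th summand ($i<k$) contributes only to exponents lying in $B_i+\hat{e}_i\Ni\subseteq \hat{e}_{k-1}\Ni$, so $\rho_k\neq 0$ forces $\hat{e}_{k-1}\mid B_k$; since $B_k-B_{k-1}=m_k\hat{e}_k$ and $\hat{e}_{k-1}=q_k\hat{e}_k$ with $\gcd(q_k,m_k)=1$, this forces $q_k=1$. Without this step, the case $q_k>1$ of \eqref{eq:aBk} (with no $\rho_k$ term) is not justified.
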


\begin{proof} Note first that $\mu=\mu_{g,0}$ by Lemma \ref{lem:pik},
  so that $\theta_g:=\mu^{-1/e}$ is a well defined invertible element
  of $\Li_g$ (Remark \ref{rem:zerodiv}). In particular,
  $\tilde{S}\in \Li_g[[T]]$ as required. Lemma \ref{lem:pik} applied
  at rank $k=g$ gives
  \begin{equation}\label{eq:SgT}
  S(T)=\sum_{k=0}^{g} \alpha_{g,k}
  T^{B_k}\left(z_k^{s_k}+c_k\left(\mu_{g,k}
  T^{\hat{e}_k}\right)\right).
  \end{equation}
  Denote by $\theta_k:=\mu_{k,0}^{-1/e_k}\in \Li_k^\times$ (Remark
  \ref{rem:zerodiv}). Using the definitions of $\mu_{g,k}$ and
  $\alpha_{g,k}$, a straightforward computation gives
  \begin{equation}\label{eq:alpha}
    \mu_{g,k}\,\theta_g^{\,\hat{e}_{k}}=\theta_k\in \Li_k\quad \textrm{and}\quad
    \alpha_{g,k}\,\theta_g^{\,B_{k}}=\prod_{j=1}^k 
    \left(\mu_{g,j}\theta_{g}^{\hat{e}_{j}}\right)^{m_j} =
    \prod_{j=1}^k \theta_j\in\Li_k.
  \end{equation}
  Combining \eqref{eq:SgT} and \eqref{eq:alpha}, we deduce that
  $\tilde{S}(T)=S(\theta_g T)$ may be written as
  \begin{equation}\label{eq:CPE}
    \tilde{S}(T)=\sum_{k=0}^{g} U_k(\theta_k T^{\hat{e}_k}) \, T^{B_{k}},
    \quad U_k(T):=\left(z_k^{s_k} +c_k(T)\right)\prod_{j=1}^k \theta_j\in \Li_k[[T]].
  \end{equation}
  As $\hat{e}_k=gcd(B_0,\ldots,B_k)$ divides both $\hat{e}_i$ and
  $B_i$ for all $i\le k$, this forces $gcd(B_0,\ldots,B_k)$ to divide
  $B$ for all $B< B_{k+1}$. In the same way, as $\Li_i\subset \Li_k$
  for all $i\le k$, we get $a_B\in \Li_k$ for all $B< B_{k+1}$. There
  remains to show \eqref{eq:aBk}. As $c_k(0)=0$, we deduce that
  \begin{equation}\label{eq:Uk0}
    U_k(0)=z_k^{s_k}\prod_{j=1}^k \theta_j^{m_j}=\varepsilon_k z_k^{1/q_k}
    \quad \textrm{with}\quad \varepsilon_k:=\prod_{j=1}^{k-1} \theta_j\,
    z_j^{\frac{-t_j\,m_k}{q_j\cdots{}q_k}}\in \Li_{k-1},
  \end{equation}
  the second equality using the B\'ezout relation
  $s_k q_k - t_k m_k=1$. Note that $\varepsilon_k\in \Li_{k-1}^\times$
  (Remarks \ref{rem:q=1} and \ref{rem:zerodiv}). Let $\rho_k$ be the
  sum of the contributions of the terms
  $T^{B_i}U_i(\theta_i T^{\hat{e}_i})$, $i\ne k$ to the coefficient
  $T^{B_k}$ of $\tilde S$. So $a_{B_k}=U_k(0)+\rho_k$. As
  $B_1\le\cdots\le B_g$ and $k\ge 1$, we deduce that if
  $U_i(\theta_i T^{\hat{e}_i})\,T^{B_i}$ contributes to $T^{B_k}$,
  then $i<k$ so that
  $U_i(\theta_i T^{\hat{e}_i})\,T^{B_i}\in
  \Li_{k-1}[[T^{\hat{e}_{k-1}}]]$.  We deduce that
  $\rho_k \in \Li_{k-1}$. Moreover, $\rho_k\ne 0$ forces
  $\hat{e}_{k-1}|B_k$. Since $m_{k}$ is coprime to $q_{k}$, we deduce
  from \eqref{eq:BkMk} that $q_k=1$.
\end{proof}
\begin{rem}\label{rmk:Puiseux}
  In contrast to the Newton-Puiseux type algorithms of \cite{PoWe17}
  which compute $\sum_B a_B T^B$ (up to some truncation bound),
  algorithm \PIrr{} of Section \ref{ssec:algo-irr} only allows to
  compute $(a_{B_k}-\rho_k) T^{B_k}$, $k=0,\ldots,g$ in terms of the
  edge data thanks to \eqref{eq:aBk} and \eqref{eq:Uk0}. As shown in
  this section, this is precisely the minimal information required to
  test pseudo-irreducibility and compute the equi-singularity
  type. For instance, the Puiseux series of $F=(y-x-x^2)^2-2x^4$ are
  $S_1=T+T^2(1-\sqrt{2})$ and $S_2=T+T^2(1+\sqrt{2})$ and we only
  compute here the "separating" terms $-\sqrt{2} T^2$ and
  $\sqrt{2} T^2$. Computing all terms of the singular part of the
  Puiseux series of a (pseudo)-irreducible polynomial in quasi-linear
  time remains an open challenge.
\end{rem}
Let us denote by $W\subset \algclos{\Ki}^g$ the zero locus of the
polynomial system defined by the canonical liftings of
$P_1,\ldots,P_g$ in $\Ki[Z_1,\ldots,Z_g]$. Note that $\Card(W)=f$.

Given $\zeta=(\zeta_1,\ldots,\zeta_g)\in W$, the choice of some
$e^{th}$-roots $\zeta_1^{1/e},\ldots,\zeta_g^{1/e}$ in $\algclos{\Ki}$
induces a natural evaluation map
\[
  ev_{\zeta}:\Li_g\simeq\Ki\big[z_1^{\,\frac{1}{e}},\ldots,z_g^{\,\frac{1}{e}}\big]\longrightarrow
  \algclos{\Ki}
\]
and we denote for short $a(\zeta)\in \algclos{\Ki}$ the evaluation of
$a\in \Li_g$ at $\zeta$. There is no loss to assume that when
$\zeta,\zeta'\in W$ satisfy $\zeta_k=\zeta_k'$, we choose
$\zeta_k^{1/e}=\zeta_k'^{1/e}$. We thus have
\begin{equation}\label{eq:hyp}
  (\zeta_1,\ldots,\zeta_k)=(\zeta'_1,\ldots,\zeta'_k)\quad \Longrightarrow
  \quad  a(\zeta)=a(\zeta')\quad \forall\,\,a\in \Li_k.
\end{equation}
The following lemma is crucial for our purpose.
\begin{lem}\label{lem:abzeta}
  Let us fix $\omega$ such that $\omega^e=1$ and let
  $\zeta,\zeta'\in W$. For all $k=0,\ldots,g$, the following
  assertions are equivalent:
  \begin{enumerate}
  \item $a_{B}(\zeta)=a_{B}(\zeta')\omega^{B}$ for all $B\le B_k$.
  \item $a_{B}(\zeta)=a_{B}(\zeta')\omega^{B}$ for all $B< B_{k+1}$.
  \item $(\zeta_1,\ldots,\zeta_k)=(\zeta_1',\ldots,\zeta_k')$ and
    $\omega^{\hat{e}_k}=1$.
  \end{enumerate}
\end{lem}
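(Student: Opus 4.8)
The plan is to prove the chain $1 \Rightarrow 2 \Rightarrow 3 \Rightarrow 1$ by induction on $k$, using the explicit expansion \eqref{eq:CPE} of $\tilde S$ together with the key fact from Proposition \ref{prop:CPE} that $a_{B_k} \in \Li_k \setminus \Li_{k-1}$ and that the $z_j^{1/e}$ generate $\Li_g$ with strictly increasing degrees. The base case $k=0$ is essentially trivial: $a_{B_0} = a_0$ is the constant term, which lies in $\Li_0 = \Ki$ (by Proposition \ref{prop:CPE}, since $B_0 = e$ and... actually $B_{0}$ needs care---$a_0$ is the $T^0$ coefficient, equal to $U_0(0) = c_0(0) = 0$, so both sides vanish), and $\hat e_0 = e$, so $\omega^{\hat e_0} = 1$ is the defining relation for $\omega$; thus all three assertions hold automatically at $k=0$, and there is no condition on $(\zeta_1,\ldots,\zeta_k)$ (empty tuple).

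For the inductive step, assume the equivalences hold at rank $k-1$. The implication $2 \Rightarrow 1$ at rank $k$ is immediate since $B_k < B_{k+1}$. The implication $1 \Rightarrow 3$ is the heart of the matter: assuming $a_B(\zeta) = a_B(\zeta')\omega^B$ for all $B \le B_k$, in particular this holds for all $B \le B_{k-1}$, so by induction (equivalence $1 \Leftrightarrow 3$ at rank $k-1$) we get $(\zeta_1,\ldots,\zeta_{k-1}) = (\zeta_1',\ldots,\zeta_{k-1}')$ and $\omega^{\hat e_{k-1}} = 1$. Now I would use the hypothesis at $B = B_k$ together with formula \eqref{eq:aBk}: since $(\zeta_1,\ldots,\zeta_{k-1})$ agree, by \eqref{eq:hyp} we have $\varepsilon_k(\zeta) = \varepsilon_k(\zeta')$ and (in the $q_k = 1$ case) $\rho_k(\zeta) = \rho_k(\zeta')$, because $\varepsilon_k, \rho_k \in \Li_{k-1}$. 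Write $\varepsilon := \varepsilon_k(\zeta) = \varepsilon_k(\zeta') \in \algclos\Ki^\times$.

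In the case $q_k > 1$: the relation $a_{B_k}(\zeta) = a_{B_k}(\zeta')\omega^{B_k}$ becomes $\varepsilon\,\zeta_k^{1/q_k} = \varepsilon\,\zeta_k'^{1/q_k}\,\omega^{B_k}$, hence $\zeta_k^{1/q_k} = \zeta_k'^{1/q_k}\omega^{B_k}$; raising to the $q_k$-th power gives $\zeta_k = \zeta_k' \omega^{q_k B_k}$. But $q_k B_k = B_{k-1}\cdot(\text{something})$... more precisely I need $\omega^{q_k B_k}$ under the known constraint $\omega^{\hat e_{k-1}} = 1$; since $\hat e_{k-1} = q_k \hat e_k$ and $\hat e_k \mid B_k$ (as $\hat e_k = \gcd(B_0,\ldots,B_k)$), we get $\hat e_{k-1} = q_k \hat e_k \mid q_k B_k$, so $\omega^{q_k B_k} = 1$ and therefore $\zeta_k = \zeta_k'$. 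Then $\zeta_k^{1/q_k} = \zeta_k'^{1/q_k}$ forces $\omega^{B_k}$ to act trivially on a primitive $q_k$-th-power-type element; combined with $\omega^{\hat e_{k-1}} = 1$ this should yield $\omega^{\hat e_k} = 1$ (one checks $\gcd(B_k, \hat e_{k-1})$ divides... using that $\hat e_k \mid B_k$ and $B_k/\hat e_k$ is coprime to $q_k$, one gets the order of $\omega$ divides $\hat e_k$). In the case $q_k = 1$: $\hat e_{k-1} = \hat e_k$ already, so $\omega^{\hat e_k} = 1$ is already known; the relation becomes $\varepsilon\,\zeta_k + \rho_k(\zeta) = (\varepsilon\,\zeta_k' + \rho_k(\zeta'))\omega^{B_k}$, and since $\hat e_k \mid B_k$ we have $\omega^{B_k} = 1$, whence $\varepsilon\zeta_k = \varepsilon\zeta_k'$ and so $\zeta_k = \zeta_k'$. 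This establishes $1 \Rightarrow 3$.

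Finally, $3 \Rightarrow 2$: assume $(\zeta_1,\ldots,\zeta_k) = (\zeta_1',\ldots,\zeta_k')$ and $\omega^{\hat e_k} = 1$. For any $B < B_{k+1}$, Proposition \ref{prop:CPE} gives $a_B \in \Li_k$ and $\hat e_k = \gcd(B_0,\ldots,B_k) \mid B$, so $\omega^B = 1$; moreover by \eqref{eq:hyp} the agreement of $(\zeta_1,\ldots,\zeta_k)$ forces $a_B(\zeta) = a_B(\zeta')$. Hence $a_B(\zeta) = a_B(\zeta') = a_B(\zeta')\omega^B$ for all $B < B_{k+1}$, which is assertion 2. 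The main obstacle I anticipate is the bookkeeping in the $q_k > 1$ case of $1 \Rightarrow 3$---specifically, correctly extracting $\zeta_k = \zeta_k'$ and then upgrading $\omega^{\hat e_{k-1}} = 1$ to $\omega^{\hat e_k} = 1$ from the equation on the $q_k$-th roots, which requires careful use of the coprimality of $m_k$ and $q_k$ and the divisibility $\hat e_k \mid B_k$; everything else is a direct unwinding of the formulas already proved.
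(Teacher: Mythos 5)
Your proposal is correct and follows essentially the same route as the paper: induct only for $1\Rightarrow 3$ (base case vacuous since $\hat e_0=e$), use the induction hypothesis plus \eqref{eq:hyp} to fix $\varepsilon_k,\rho_k$, split on $q_k>1$ vs.\ $q_k=1$ to extract $\zeta_k=\zeta_k'$ and $\omega^{B_k}=1$, then upgrade to $\omega^{\hat e_k}=1$ via coprimality of $m_k$ and $q_k$, with $3\Rightarrow 2$ proved directly from Proposition~\ref{prop:CPE}. The only cosmetic difference is in the last upgrade step: you compute $\gcd(B_k,\hat e_{k-1})=\hat e_k$ from $\hat e_k\mid B_k$ and $\gcd(B_k/\hat e_k,q_k)=1$, while the paper writes $B_k=\sum_{s\le k}m_s\hat e_s$, reduces modulo $\omega^{\hat e_{k-1}}=1$ to get $(\omega^{\hat e_k})^{m_k}=1$, and invokes B\'ezout with $(\omega^{\hat e_k})^{q_k}=1$ --- these are the same computation; you should make explicit the second appeal to \eqref{eq:hyp} (after deducing $\zeta_k=\zeta_k'$, it is what forces $a_{B_k}(\zeta)=a_{B_k}(\zeta')$, hence $\omega^{B_k}=1$), which is currently only gestured at.
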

\begin{proof}
  By Proposition \ref{prop:CPE}, we have $a_B\in \Li_k$ and
  $\hat{e}_k|B$ for all $B<B_{k+1}$ from which we deduce
  $3)\Rightarrow 2)$ thanks to hypothesis (\ref{eq:hyp}). As
  $2)\Rightarrow 1)$ is obvious, we need to show $1)\Rightarrow 3)$.
  We show it by induction on $k$. If $k=0$, the claim follows
  immediately since $\hat{e}_0=e$. Suppose $1)\Rightarrow 3)$ holds
  true at rank $k-1$ for some $k\ge 1$. If
  $a_{B}(\zeta)=a_{B}(\zeta')\omega^{B}$ for all $B\le B_k$, then this
  holds true for all $B\le B_{k-1}$. As
  $\varepsilon_k\in\Li_{k-1}^\times$ and $\rho_k\in\Li_{k-1}$, the
  induction hypothesis combined with (\ref{eq:hyp}) gives
  $\varepsilon_k(\zeta)=\varepsilon_k(\zeta')\ne 0$ and
  $\rho_k(\zeta)=\rho_k(\zeta')$. We use now the assumption
  $a_{B_k}(\zeta)=a_{B_k}(\zeta')\omega^{B_k}$. Two cases occur:
  \begin{itemize}
  \item If $q_k>1$, we deduce from (\ref{eq:aBk}) that
    $\zeta_k^{1/q_k}=\zeta_k'^{1/q_k}\omega^{B_k}$, so that
    $\zeta_k=\zeta_k'\omega^{q_k B_k}$.  As $\hat{e}_{k-1}$ divides
    $q_k B_k$ and $\omega^{\hat{e}_{k-1}}=1$ by induction hypothesis,
    we deduce $\zeta_k=\zeta_k'$, as required. Moreover, we get
    $a_{B_k}(\zeta_k)=a_{B_k}(\zeta_k')$ thanks to (\ref{eq:hyp}), so
    that $\omega^{B_k}=1$.
  \item If $q_k=1$, we deduce from (\ref{eq:aBk}) that
    $\zeta_k+\rho_k(\zeta)=\omega^{B_k}(\zeta_k'+\rho_k(\zeta'))$.  As
    $q_k=1$ implies $\hat{e}_{k-1}=\hat{e}_k|B_k$, we deduce again
    $\omega^{B_k}=1$ and $\zeta_k=\zeta'_k$.
  \end{itemize}
  As $B_k=\sum_{s\le k} m_s \hat{e}_s$, induction hypothesis gives
  $(\omega^{\hat{e}_k})^{m_k}=1$. Since $m_k$ is coprime to $q_k$ and
  $(\omega^{\hat{e}_k})^{q_k}=\omega^{\hat{e}_{k-1}}=1$, this forces
  $\omega^{\hat{e}_k}=1$.
\end{proof}
Finally, we can recover all the Puiseux series of a pseudo-irreducible
polynomial from the parametrisation $\pi_g(T,0)$, as required. More
precisely :
\begin{cor}\label{cor:branches}
  Suppose that $F$ is pseudo-irreducible and Weierstrass. Then $F$
  admits exactly $f$ distinct monic irreducible factors
  $F_{\zeta}\in \algclos{\Ki}[[x]][y]$ indexed by $\zeta\in W$.  Each
  factor $F_{\zeta}$ has degree $e$ and defines a branch with
  classical Puiseux parametrisations $(T^e,\tilde{S}_{\zeta}(T))$
  where
  \begin{equation}\label{eq:SetParam}
    \tilde{S}_{\zeta}(T)=\sum_B a_{B}(\zeta)T^B.
  \end{equation} 
  The $e$ Puiseux series of $F_{\zeta}$ are given by
  $\tilde{S}_{\zeta}(\omega x^{\frac{1}{e}})$ where $\omega$ runs over
  the $e^{th}$-roots of unity and this set of Puiseux series does not
  depend of the choice of the $e^{th}$-roots
  $\zeta_1^{1/e},\ldots,\zeta^{1/e}$.
\end{cor}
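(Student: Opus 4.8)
\emph{Plan.} Everything follows by specialising the identity $\pi_g^*F=U_g H_g$ of \eqref{eq:pikHk} to $k=g$. Since $F$ is pseudo-irreducible we have $N_g=1$ and $H_g=y$, so $\pi_g^*F=U_g\,y$. Evaluating at $y=0$ and writing $(\mu T^e,S(T)):=\pi_g(T,0)$ as in Proposition \ref{prop:CPE}, this gives $F(\mu T^e,S(T))=0$ in $\Ki_g[[T]]$. As $\theta_g=\mu^{-1/e}$ is an invertible element of $\Li_g$ (Remark \ref{rem:zerodiv}) with $\mu\,\theta_g^{\,e}=1$, substituting $T\mapsto\theta_g T$ yields $F(T^e,\tilde S(T))=0$ in $\Li_g[[T]]$, where $\tilde S(T)=S(\theta_g T)=\sum_B a_B T^B$ is the series of Proposition \ref{prop:CPE}. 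For each $\zeta\in W$, applying the evaluation $ev_\zeta:\Li_g\to\algclos{\Ki}$ — a $\Ki$-algebra homomorphism, hence fixing the coefficients of $F$ — gives $F(T^e,\tilde S_\zeta(T))=0$ with $\tilde S_\zeta(T)=\sum_B a_B(\zeta)T^B$. Since $F$ is Weierstrass, $F(0,\tilde S_\zeta(0))=\tilde S_\zeta(0)^\dy=0$ forces $\tilde S_\zeta(0)=0$, so $\tilde S_\zeta(x^{1/e})$ is a root of $F$ in $\algclos{\Ki}[[x^{1/e}]]$, integral over $\algclos{\Ki}[[x]]$, and its minimal polynomial $F_\zeta$ is a monic irreducible factor of $F$ in $\algclos{\Ki}[[x]][y]$.

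I would then check that $(T^e,\tilde S_\zeta(T))$ is primitive, i.e. $\deg F_\zeta=e$. By Proposition \ref{prop:CPE}, $\hat e_k\mid B$ whenever $a_B\neq0$ and $B<B_{k+1}$, and $\hat e_g=gcd(B_0,\ldots,B_g)=1$; since $\hat e_k=\hat e_{k-1}$ when $q_k=1$, this gives $gcd(e,\{B_k:q_k>1\})=gcd(e,B_1,\ldots,B_g)=1$. For $q_k>1$, formula \eqref{eq:aBk} reads $a_{B_k}=\varepsilon_k z_k^{1/q_k}$ with $\varepsilon_k\in\Li_{k-1}^\times$, so $\varepsilon_k(\zeta)\neq0$; moreover $\zeta_k\neq0$ because Definition \ref{def:pseudodeg} forces $P_k(0)\in\Ki_{k-1}^\times$ when $q_k>1$. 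Hence $a_{B_k}(\zeta)\neq0$ for every $k$ with $q_k>1$, so $gcd\bigl(e,\{B:a_B(\zeta)\neq0\}\bigr)=1$: the parametrisation is primitive, $F_\zeta$ has degree $e$, and its Puiseux series are exactly the $\tilde S_\zeta(\omega x^{1/e})$, $\omega^e=1$. For distinctness, if $F_\zeta=F_{\zeta'}$ they share a Puiseux series, so $a_B(\zeta)=a_B(\zeta')\omega^{B}$ for all $B$ and some $\omega^e=1$; Lemma \ref{lem:abzeta} at rank $g$ then forces $(\zeta_1,\ldots,\zeta_g)=(\zeta_1',\ldots,\zeta_g')$, i.e. $\zeta=\zeta'$. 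Finally, iterating $N_{k-1}=q_k\ell_k N_k$ gives $N_0=e_gf_g N_g$, so $N_g=1$ yields $\dy=N_0=ef$; as the $f=\Card(W)$ distinct monic irreducible factors $F_\zeta$ then have total degree $ef=\deg F$ and $F$ is monic, $F=\prod_{\zeta\in W}F_\zeta$, which are therefore all the monic irreducible factors of $F$.

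For the last assertion I would observe that $S(T)$ has coefficients in $\Ki_g$ (by construction $\pi_g$, hence $\pi_g(T,0)$, has coefficients in $\Ki_g$), so $S_\zeta:=ev_\zeta(S)\in\algclos{\Ki}[[T]]$ is insensitive to the choice of the $e^{th}$-roots $\zeta_i^{1/e}$; only $\theta_g(\zeta)$ depends on this choice. Since $\tilde S_\zeta(T)=S_\zeta(\theta_g(\zeta)\,T)$ and $\theta_g(\zeta)^e=\mu(\zeta)^{-1}$, two admissible choices produce values of $\theta_g(\zeta)$ that differ by some $\gamma$ with $\gamma^e=1$; the corresponding series are then $\tilde S_\zeta(T)$ and $\tilde S_\zeta(\gamma T)$, which carry the same set $\{\tilde S_\zeta(\omega x^{1/e}):\omega^e=1\}$ of Puiseux series — equivalently, the same $F_\zeta$.

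The bookkeeping above is routine; the one genuinely delicate point is the non-vanishing $a_{B_k}(\zeta)\neq0$ for $q_k>1$ that underlies primitivity. This is exactly where the hypothesis ``$P(0)\in\Ai^\times$ if $q>1$'' built into pseudo-degeneracy (Definition \ref{def:pseudodeg}) is used in an essential way: it guarantees that $e$ is the true ramification index of each branch $F_\zeta$, not a proper multiple of it. Everything else is a matter of assembling \eqref{eq:pikHk}, Proposition \ref{prop:CPE}, Lemma \ref{lem:abzeta} and the degree count.
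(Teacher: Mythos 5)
Your proof is correct and follows essentially the same route as the paper: specialise $\pi_g^*F=U_gH_g$ with $H_g=y$, use Proposition \ref{prop:CPE} and the non-vanishing of $a_{B_k}(\zeta)$ for $q_k>1$ to get primitivity and degree $e$, Lemma \ref{lem:abzeta} for distinctness, and the count $\deg F=ef$ to conclude. You merely spell out details the paper leaves implicit (why $\zeta_k\ne 0$ when $q_k>1$ via $P_k(0)\in\Ki_{k-1}^\times$, and the root-choice independence through $\theta_g$), which is consistent with the paper's argument.
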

\begin{proof}
  As $F$ is pseudo-irreducible, $H_g=y$ (see Section
  \ref{ssec:pseudoIrr}) and $\pi_g^* F(x,0) = 0$ by
  \eqref{eq:pikHk}. We deduce $F(T^e,\tilde{S}_{\zeta}(T))=0$ for all
  $\zeta\in W$.  By \eqref{eq:aBk}, we have $a_{B_k}(\zeta)\ne 0$ for
  all $k$ such that $q_k>1$. Since
  $gcd(B_0=e,B_k \,|\, q_k>1)) = gcd(B_0,\ldots,B_g)=\hat{e}_g=1$, the
  parametrisation $(T^e,\tilde{S}_{\zeta}(T))$ is primitive, that is
  the greatest common divisor of the exponents of the series $T^e$ and
  $\tilde{S}_{\zeta}(T)$ equals one. Hence, this parametrisation
  defines a branch $F_{\zeta}=0$, where
  $F_{\zeta}\in \algclos{\Ki}[[x]][y]$ is an irreducible monic factor
  of $F$ of degree $e$. Thanks to Lemma \ref{lem:abzeta}, these $f$
  branches are distinct when $\zeta$ runs over $W$. As $\deg(F)=e\,f$,
  we obtain in such a way all irreducible factors of $F$. Considering
  other choices of the $e^{th}$ roots of the $\zeta_k$'s would lead to
  the same conclusion by construction, and the last claim follows
  straightforwardly.
\end{proof}
%ICI
\subsection{Pseudo-irreducible implies balanced}\label{ssec:PIrrToEqSing}
\begin{prop}\label{cor:charac}
  Let $F\in \Ki[[x]][y]$ be pseudo-irreducible. Then each branch
  $F_{\zeta}$ of $F$ has characteristic exponents
  $(B_0;B_k \, |\, q_k >1), \,k=1,\ldots,g)$.
\end{prop}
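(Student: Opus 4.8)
The plan is to read off the characteristic exponents directly from the explicit Puiseux parametrisation $(T^e,\tilde S_\zeta(T))$ provided by Corollary \ref{cor:branches}, using the description of its coefficients from Proposition \ref{prop:CPE}. By definition, the characteristic exponents of the branch $F_\zeta$ are $\beta_0=e$ together with the successive exponents $i$ at which the gcd of the previously-discovered exponents strictly drops when we look at the support of $\tilde S_\zeta$. So the whole proof is a bookkeeping argument matching the $B_k$'s against this gcd-dropping condition.

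First I would fix $\zeta\in W$ and write $\tilde S_\zeta(T)=\sum_{B>0} a_B(\zeta) T^B$. From Proposition \ref{prop:CPE} we know that every exponent $B$ appearing with $a_B(\zeta)\neq 0$ and $B<B_{k+1}$ satisfies $\hat e_k=\gcd(B_0,\ldots,B_k)\mid B$; in particular the support of $\tilde S_\zeta$ truncated below $B_{k+1}$ is contained in $\hat e_k\Ni$. Next, the formula \eqref{eq:aBk} together with $\varepsilon_k(\zeta)\in\algclos\Ki^\times$ (nonzero because $\varepsilon_k\in\Li_{k-1}^\times$) shows that, whenever $q_k>1$, the coefficient $a_{B_k}(\zeta)=\varepsilon_k(\zeta)\zeta_k^{1/q_k}$ is nonzero; so $B_k$ genuinely lies in the support. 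One then checks the gcd drop: assuming inductively that the characteristic exponents discovered among exponents $<B_k$ are exactly $\beta_0=e$ and those $B_j$ with $j<k$, $q_j>1$, their gcd is $\hat e_{k-1}$ (since $\hat e_j=\gcd(B_0,\ldots,B_j)$ and $\hat e_{j-1}=\hat e_j$ whenever $q_j=1$). If $q_k>1$, then $B_k$ is the least exponent not divisible by $\hat e_{k-1}$: it is not divisible because $\hat e_{k-1}/\hat e_k=q_k>1$ and $\hat e_k\mid B_k$; and nothing strictly between the previous characteristic exponent and $B_k$ can fail to be divisible, because all exponents $<B_k$ lie in $\hat e_{k-1}\Ni$ by the support statement of Proposition \ref{prop:CPE} applied at rank $k-1$. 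Hence $B_k$ is exactly the next characteristic exponent $\beta$. If instead $q_k=1$, then $\hat e_k=\hat e_{k-1}$, no new divisor of the ramification is discovered at this stage, and $B_k$ is not a characteristic exponent — consistent with the claimed list $(B_k\mid q_k>1)$.

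Finally I would close the induction by checking the termination: after exhausting all $k=1,\ldots,g$ we have discovered the exponent-gcd $\hat e_g=\gcd(B_0,\ldots,B_g)=1$, which matches the defining property that $g$ (in the characteristic-exponent sense) is the least index making the gcd equal one; and there are no further characteristic exponents since the gcd has already reached $1$. Therefore the characteristic exponents of $F_\zeta$ are precisely $(B_0;\,B_k\mid q_k>1)$, which is independent of $\zeta$, proving in particular that all branches share the same $\C(F_\zeta)$. The main obstacle, and the only genuinely delicate point, is the "nothing in between" argument: one must be careful that Proposition \ref{prop:CPE}'s divisibility statement is applied at the correct rank ($B<B_k$ forces divisibility by $\hat e_{k-1}$, using $B_k\le B_{k+1}$ and the monotonicity $B_1\le\cdots\le B_g$), and that the coefficient $a_{B_k}(\zeta)$ is verified nonzero only under the hypothesis $q_k>1$; the $q_k=1$ case must be handled separately so as not to spuriously record $B_k$ as a characteristic exponent.
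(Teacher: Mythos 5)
Your proof is correct and follows essentially the same path as the paper: read off the characteristic exponents from the explicit parametrisation $(T^e,\tilde S_\zeta(T))$ of Corollary \ref{cor:branches} using the support and coefficient description of Proposition \ref{prop:CPE}; you are simply unpacking the paper's terse ``we conclude by Proposition \ref{prop:CPE}'' into the explicit gcd-dropping bookkeeping. One small detail worth making explicit: to conclude $a_{B_k}(\zeta)\ne 0$ when $q_k>1$ you also need $\zeta_k\ne 0$ (not just $\varepsilon_k(\zeta)\ne 0$), which holds because Definition \ref{def:pseudodeg} requires $P_k(0)\in\Ki_{k-1}^\times$ when $q_k>1$.
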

\begin{proof} Thanks to Corollary \ref{cor:branches}, all polynomials
  $F_{\zeta}$ have same first characteristic exponent $B_0=e$. We also
  showed in the proof of Corollary \ref{cor:branches} that
  $a_{B_k}(\zeta)\ne 0$ for all $k\ge 1$ such that $q_k>1$. We
  conclude by Proposition \ref{prop:CPE}.
\end{proof}
\begin{prop}\label{prop:series}
  Let $F\in \Ki[[x]][y]$ be pseudo-irreducible with at least two
  branches $F_{\zeta}, F_{\zeta'}$. We have
  \[
  (F_{\zeta},F_{\zeta'})_0=M_{\kappa},\quad \kappa:=\min
  \left\{k=1,\ldots,g \, |\, \zeta_k \ne \zeta'_k\right\}.
  \]
  and this value is reached exactly
  $\hat{f}_{\kappa-1}-\hat{f}_{\kappa}$ times when $\zeta'$ runs over
  the set $W\setminus\{\zeta\}$.
\end{prop}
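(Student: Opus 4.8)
The plan is to compute the intersection multiplicity $(F_{\zeta},F_{\zeta'})_0$ via Noether-Merle's formula \eqref{eq:NMformula}, so the first task is to determine the contact order $\Cont(F_\zeta,F_{\zeta'})$. Both branches have degree $e$ and, by Corollary \ref{cor:branches}, Puiseux parametrisations $(T^e,\tilde S_\zeta(T))$, $(T^e,\tilde S_{\zeta'}(T))$ with $\tilde S_\zeta=\sum_B a_B(\zeta)T^B$. A root of $F_\zeta$ is $\tilde S_\zeta(\omega x^{1/e})$ and a root of $F_{\zeta'}$ is $\tilde S_{\zeta'}(\omega' x^{1/e})$, so $\val(y-y')=\frac1e\,\val_T\bigl(\sum_B(a_B(\zeta)\omega^B-a_B(\zeta')\omega'^B)T^B\bigr)$. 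Writing $\omega''=\omega'/\omega$ this equals $\frac1e\min\{B : a_B(\zeta)\ne a_B(\zeta')\omega''^B\}$ after absorbing the common factor $\omega^B$. Now Lemma \ref{lem:abzeta} applied with $\omega''$ in place of $\omega$ is exactly the tool that identifies this minimum: its equivalence $1)\Leftrightarrow 3)$ says that $a_B(\zeta)=a_B(\zeta')\omega''^B$ for all $B\le B_k$ if and only if $(\zeta_1,\ldots,\zeta_k)=(\zeta'_1,\ldots,\zeta'_k)$ and $\omega''^{\hat e_k}=1$. Since $\kappa$ is the first index with $\zeta_\kappa\ne\zeta'_\kappa$, the largest $k$ for which one can hope to match all coefficients up to $B_k$ is $k=\kappa-1$, and then one must have $\omega''^{\hat e_{\kappa-1}}=1$; conversely such $\omega''$ do make all coefficients agree for $B<B_\kappa$. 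Choosing $\omega,\omega'$ (equivalently $\omega''$) to maximise the contact, the best case is $\omega''^{\hat e_{\kappa-1}}=1$, giving $\Cont(F_\zeta,F_{\zeta'})=e\cdot\frac{1}{e}\cdot B_\kappa=B_\kappa$ unless the coefficients happen to agree even further. I would rule out going further by noting that at $B=B_\kappa$ one has $a_{B_\kappa}(\zeta)\ne a_{B_\kappa}(\zeta')\omega''^{B_\kappa}$: in the $q_\kappa>1$ case \eqref{eq:aBk} gives $a_{B_\kappa}=\varepsilon_\kappa z_\kappa^{1/q_\kappa}$ with $\varepsilon_\kappa$ a unit agreeing on $\zeta,\zeta'$ (by the induction already used in Lemma \ref{lem:abzeta}), and $\zeta_\kappa\ne\zeta'_\kappa$ forces the $q_\kappa$-th roots to differ even after twisting by $\omega''^{B_\kappa}$ since $\omega''^{q_\kappa B_\kappa}$ is an $\hat e_\kappa$-th power of a root of unity whose $q_\kappa$-th... — more cleanly, $a_{B_\kappa}(\zeta)=a_{B_\kappa}(\zeta')\omega''^{B_\kappa}$ together with $\omega''^{\hat e_{\kappa-1}}=1$ would force $\zeta_\kappa=\zeta'_\kappa$ by the very argument in the two bullet points of the proof of Lemma \ref{lem:abzeta}. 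Hence $\Cont(F_\zeta,F_{\zeta'})=B_\kappa$ exactly.

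With the contact order in hand, I apply \eqref{eq:NMformula} with $d_F=d_G=e$, the characteristic exponents of $F_\zeta$ being $(B_0;B_k\,|\,q_k>1)$ by Proposition \ref{cor:charac}, and $E_k=\hat e_k$ since $\hat e_k=\gcd(B_0,\ldots,B_k)$. Here $\kappa$ is precisely $\max\{k : \Cont\ge B_k\}$ restricted to the characteristic indices, but one must be a little careful: if $q_\kappa=1$ then $B_\kappa$ is not itself a characteristic exponent, yet $\hat e_{\kappa-1}=\hat e_\kappa$ so the value $\Cont=B_\kappa$ sits "between" characteristic exponents with no new gcd drop, and the Noether-Merle sum over $k\le\kappa$ (characteristic) plus the term $E_\kappa\Cont$ telescopes. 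Explicitly, substituting $\Cont=B_\kappa$ and using $E_{k-1}-E_k=\hat e_{k-1}-\hat e_k$ (which is $0$ unless $q_k>1$, so the sum over characteristic $k$ equals the sum over all $k\le\kappa$), formula \eqref{eq:NMformula} becomes $(F_\zeta,F_{\zeta'})_0=\sum_{k\le\kappa}(\hat e_{k-1}-\hat e_k)B_k+\hat e_\kappa B_\kappa$, which is exactly $M_\kappa$ by \eqref{eq:Mk}. This is the desired formula.

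For the multiplicity count, I fix $\zeta$ and count $\zeta'\in W\setminus\{\zeta\}$ with $\min\{k:\zeta_k\ne\zeta'_k\}=\kappa$, i.e. $(\zeta'_1,\ldots,\zeta'_{\kappa-1})=(\zeta_1,\ldots,\zeta_{\kappa-1})$ and $\zeta'_\kappa\ne\zeta_\kappa$. Recall $W$ is cut out by the liftings of $P_1,\ldots,P_g$ and $\Card(W)=f$; more precisely, fixing the first $k-1$ coordinates to a point of the corresponding partial variety, the polynomial $P_k$ (whose coefficients live in $\Ki_{k-1}$, evaluated at that point) has $\ell_k$ distinct roots, and then the remaining coordinates range over a set of size $\hat f_k=f/f_k$. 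Thus the number of $\zeta'$ with prescribed first $\kappa-1$ coordinates equal to those of $\zeta$ is $\hat f_{\kappa-1}$, of which those additionally having $\zeta'_\kappa=\zeta_\kappa$ number $\hat f_\kappa$; the difference $\hat f_{\kappa-1}-\hat f_\kappa$ is the count of $\zeta'$ at contact level exactly $\kappa$, as claimed. Summing over $\kappa=1,\ldots,g$ recovers $\sum_\kappa(\hat f_{\kappa-1}-\hat f_\kappa)=\hat f_0-\hat f_g=f-1=\Card(W\setminus\{\zeta\})$, a consistency check.

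The main obstacle I anticipate is the bookkeeping around whether $q_\kappa=1$ or $q_\kappa>1$ when reading off $\Cont$ and then feeding it into Noether-Merle's formula: one needs that the "$\kappa$" appearing as the last index in \eqref{eq:NMformula} (defined there via characteristic exponents) is compatible with the combinatorial $\kappa=\min\{k:\zeta_k\ne\zeta'_k\}$ used here, and that the telescoping of $\sum(\hat e_{k-1}-\hat e_k)B_k+\hat e_\kappa B_\kappa$ into $M_\kappa$ goes through even when $B_\kappa$ is not a characteristic exponent (the $q_\kappa=1$ case), which works precisely because then $\hat e_{\kappa-1}=\hat e_\kappa$ makes the extra term harmless. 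A secondary technical point is justifying that the contact is not larger than $B_\kappa$, i.e. that $a_{B_\kappa}(\zeta)\ne a_{B_\kappa}(\zeta')\omega''^{B_\kappa}$ for every admissible twist $\omega''$; this is handled by re-invoking the case analysis already carried out inside the proof of Lemma \ref{lem:abzeta}.
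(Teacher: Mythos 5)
Your proof is correct and follows essentially the same strategy as the paper: compute $\Cont(F_\zeta,F_{\zeta'})=B_\kappa$ via Lemma \ref{lem:abzeta}, feed it into Noether--Merle, telescope to $M_\kappa$ using \eqref{eq:Mk}, and count $\zeta'$ with prescribed leading coordinates. The only genuine difference is presentational: you maximise $\val(y-y')$ over both $e$-th roots of unity $\omega,\omega'$ and then reduce to the quotient $\omega''=\omega'/\omega$, whereas the paper invokes a standard lemma (\cite[Lemma 1.2.3]{Ga95}) that the contact can be computed after fixing one root of $F$, so that only a single $\omega$ appears. Your reduction buys you a self-contained argument without the external citation, at the cost of a slightly longer computation; both routes funnel through the same key input, namely the equivalence $1)\Leftrightarrow 3)$ of Lemma \ref{lem:abzeta}, and both need exactly the same telescoping in Noether--Merle (handling the non-characteristic $B_\kappa$ when $q_\kappa=1$ via $\hat e_{\kappa-1}=\hat e_\kappa$), so the overall structure is the same.
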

\begin{proof} Noether-Merle's formula \eqref{eq:NMformula} combined
  with Proposition \ref{cor:charac} gives
  \begin{equation}\label{eq:merle}
    (F_{\zeta},F_{\zeta'})_0=\sum_{k \le K} 
    (\hat{e}_{k-1}-\hat{e}_k) B_k +  \hat{e}_K \Cont(F_{\zeta},F_{\zeta'})
  \end{equation}
  with $K = \max\{k\,|\,\Cont(F_\zeta,F_{\zeta'}) \geq B_k\}$. Note
  that the $B_k$'s which are not characteristic exponents do not
  appear in the first summand of formula (\ref{eq:merle}) ($q_k=1$
  implies $\hat{e}_{k-1}-\hat{e}_k=0$). It is a classical fact that we
  can fix any root $y$ of $F$ for computing the contact order in
  formula \eqref{eq:ContFormula} (see e.g. \cite[Lemma
  1.2.3]{Ga95}). Combined with Corollary \ref{cor:branches}, we obtain
  the formula
  \begin{equation}\label{eq:cont}
    \Cont(F_{\zeta},F_{\zeta'})=\max_{\omega^e=1}\left(v_T\left(\tilde{S}_{\zeta}(T)
    -\tilde{S}_{\zeta'}(\omega T)\right)\right).
  \end{equation}
  We deduce from Lemma \ref{lem:abzeta} that
  \[
  v_T\left(\tilde{S}_{\zeta}(T)-\tilde{S}_{\zeta'}(\omega
    T)\right)=B_{\bar\kappa},\quad \bar\kappa:=\min \left\{k=1,\ldots,g
    \,\, |\,\, \zeta_k \ne \zeta'_k \,\, \textrm{or}\,\,
    \omega^{\hat{e}_k}\ne 1\right\}.
  \]
  As $\omega=1$ satisfies $\omega^{\hat{e}_k}=1$ for all $k$, we
  deduce from the last equality that the maximal value in
  \eqref{eq:cont} is reached for $\omega=1$ (it might be reached for
  other values of $\omega$). It follows that
  $\Cont(F_{\zeta},F_{\zeta'})=B_{\kappa}$ with
  $\kappa=\min \left\{k \, |\, \zeta_k \ne \zeta'_k\right\}$. We thus
  have $K=\kappa$ and \eqref{eq:merle} gives
  $(F_{\zeta},F_{\zeta'})_0=\sum_{k=1}^{\kappa}(\hat{e}_{k-1}-\hat{e}_k)
  B_k + \hat{e}_{\kappa} B_{\kappa}=M_{\kappa}$,
  the last equality by \eqref{eq:Mk}. Let us fix $\zeta$. As said
  above, we may choose $\omega=1$ in \eqref{eq:cont}. We have
  $v_T(\tilde{S}_{\zeta}(T)-\tilde{S}_{\zeta'}(T))=B_{\kappa}$ if and
  only if $\zeta_k'=\zeta_k$ for $k<\kappa$ and
  $\zeta_{\kappa}\ne \zeta'_{\kappa}$. This concludes, as the number
  of possible such values of $\zeta'$ is precisely
  $\hat{f}_{\kappa-1}-\hat{f}_{\kappa}$.
\end{proof}
If $F$ is pseudo-irreducible, then it is balanced and satisfies both
items of Theorem \ref{thm:pseudo} thanks to Propositions
\ref{cor:charac} and \ref{prop:series}. There remains to show the
converse.

\subsection{Balanced implies pseudo-irreducible}\label{ssec:EqSingToPIrr}
We need to show that $N_g=1$ if $F$ is balanced. We denote more simply
$H:=H_g\in \Ki_g[[x]][y]$, and $\pi_g(T,0)=(\mu T^e,S(T))$. We denote
$H_{\zeta}, S_{\zeta},\mu_{\zeta}$ the images of $H, S,\mu$ after
applying (coefficient wise) the evaluation map
$ev_{\zeta}:\Ki_g\to \algclos{\Ki}$. In what follows, irreducible
means absolutely irreducible.
\begin{lem}\label{lem:samedeg}
  Suppose that $F$ is balanced. Then all irreducible factors of all
  $H_{\zeta}$, $\zeta\in W$ have same degree.
\end{lem}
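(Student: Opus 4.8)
The plan is to relate the irreducible factors of $H_\zeta$ to the branches of $F$ whose "history" along the Newton–Puiseux tree coincides with $\zeta$ up to rank $g$, and then invoke the balanced hypothesis to force all these factors to have the same characteristic exponents, hence the same degree. More precisely, recall from \eqref{eq:pikHk} that $\pi_g^* F = U_g H_g$ with $U_g(0,0)\in\Ki_g^\times$. Applying the evaluation map $ev_\zeta:\Ki_g\to\algclos\Ki$ (coefficientwise) to this identity, and using that $\pi_g$ specialises under $ev_\zeta$ to the substitution $(x,y)\mapsto (\mu_\zeta x^e, \alpha_\zeta x^{r_g} y + S_\zeta(x))$ coming from \eqref{eq:pikxy}, we see that the irreducible factors of $H_\zeta$ correspond bijectively (via this change of variables, which is a composition of a monomial map $x\mapsto \mu_\zeta x^e$ and a shift) to exactly those irreducible factors $F'$ of $F$ for which $\Cont(F',\cdot)$ is large enough that the parametrisation $\pi_g(T,0)$ specialised at $\zeta$ "captures" $F'$. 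Using Corollary \ref{cor:branches} — or rather its proof technique, since we do not yet know $F$ is pseudo-irreducible — one identifies the branches of $F$ reached through $\zeta$ as those whose Puiseux series agree with $\tilde S_\zeta$ up to the relevant truncation.

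First I would make precise the bookkeeping: write $F=\prod_{j} F_j$ over $\algclos\Ki$, and for each branch $F_j$ pick a Puiseux series $y_j$. The construction of $\pi_g$ via repeated application of Lemma \ref{lem:anptransform} (choosing, at each rank $k$, a root $z_k$ of $P_k$ modulo one of the field factors) is exactly the classical Newton–Puiseux process: the branches of $F$ are partitioned according to which edge of each successive Newton polygon, and which root of each successive residual polynomial, they follow. A point $\zeta=(\zeta_1,\dots,\zeta_g)\in W$ records one such choice-path, and the branches "lying over $\zeta$" are precisely those whose first $g$ rounds of Newton–Puiseux data match $\zeta$. Under the substitution specialising $\pi_g$ at $\zeta$, each such branch $F_j$ is transformed into a branch of $H_\zeta$ (after removing the unit $U_g$), and conversely every irreducible factor of $H_\zeta$ arises this way; the degree of that factor of $H_\zeta$ is $\deg(F_j)/e$ since the monomial map $x\mapsto x^e$ divides the $y$-degree contribution by $e$ while the shift preserves it.

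Next I would bring in the hypothesis that $F$ is balanced: by Definition \ref{def:balanced} all branches $F_j$ of $F$ have the same set of characteristic exponents $\C(F)$, hence the same degree $\beta_0=\deg(F_j)$ (the first characteristic exponent). Therefore all factors of all the $H_\zeta$ have degree $\deg(F_j)/e$, which is one and the same integer independent of $\zeta$ and of the chosen factor. That is exactly the assertion. The one subtlety to be careful about is whether every irreducible factor of $H_\zeta$ really does come from an honest branch of $F$ over $\algclos\Ki$ (as opposed to some spurious factor introduced by working over the possibly-non-reduced-looking ring $\Ki_g$ and then specialising): this is handled by the fact that $\Ki_g$ is a product of fields (Remark \ref{rem:q=1}), so $ev_\zeta$ is a genuine ring homomorphism to $\algclos\Ki$, $H_\zeta$ is an honest Weierstrass polynomial over $\algclos\Ki$, and the identity $ev_\zeta(\pi_g^*F)=ev_\zeta(U_g)\,H_\zeta$ with $ev_\zeta(U_g)(0,0)\ne 0$ shows $H_\zeta$ divides the transform of $F$ up to a unit.

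The main obstacle I expect is the precise matching between irreducible factors of $H_\zeta$ and branches of $F$: one must verify that the substitution specialising $\pi_g$ at $\zeta$ sets up a clean bijection respecting degrees, which requires checking that distinct branches over the same $\zeta$ remain distinct and irreducible after the inverse transform, and that no branch over $\zeta$ is "lost" or "merged". This is a standard but slightly delicate Newton–Puiseux bookkeeping argument — essentially the reason the $N_k$-sequence tracks $\sum$ of degrees of branches sharing a common truncation — and it is where the structure built in Subsection \ref{ssec:pseudoRNP} (Lemma \ref{lem:pik}, Proposition \ref{prop:CPE}, Lemma \ref{lem:abzeta}) does the real work, even though here $F$ is only assumed balanced rather than pseudo-irreducible.
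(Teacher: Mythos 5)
Your proposal takes essentially the same route as the paper: transform a root $y_\zeta$ of $H_\zeta$ via (the specialisation at $\zeta$ of) $\pi_g$ into a root $y_0$ of $F$, use the degree relation $\deg(y_0)=e\deg(y_\zeta)$ over $\algclos\Ki((x))$, and conclude from the balanced hypothesis that all roots of $F$ have the same degree $\beta_0$, hence all roots of all $H_\zeta$ do. The one place you make things harder than necessary is the "main obstacle" you flag: the paper never needs the full bijection between irreducible factors of $H_\zeta$ and branches of $F$ over $\zeta$, nor any check that branches are not lost or merged. It suffices that each root $y_\zeta$ of $H_\zeta$ \emph{produces some} root $y_0$ of $F$ with $\deg_{\algclos{\Ki}((x))}(y_0)=e\deg_{\algclos{\Ki}((x))}(y_\zeta)$; since $F$ balanced forces a single value for $\deg(y_0)$, the degree $\deg(y_\zeta)=\deg(y_0)/e$ is automatically the same for every $\zeta$ and every root. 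So the bookkeeping you worried about (and which you correctly note would require re-doing parts of Subsection \ref{ssec:pseudoRNP} under the weaker "balanced" hypothesis) can simply be dropped, yielding the short proof in the paper.
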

\begin{proof}
  Let $\zeta\in W$ and let $y_\zeta$ be a root of $H_{\zeta}$. As
  $H_{\zeta}$ divides $(\pi_{g}^* F)_\zeta$ by \eqref{eq:pikHk}, we
  deduce from Lemma \ref{lem:pik} (remember $B_{gg}=B_g$) that
  \[
  F(\mu_{\zeta} x^e,S_{\zeta}(x)+x^{B_g} y_\zeta(x))=0.
  \]
  Hence,
  $y_0(x) := \tilde{S}_{\zeta}(x^{\frac{1}{e}}) +
  \mu_{\zeta}^{-\frac{B_g}{e}}x^{\frac{B_{g}}{e}} \,
  y_\zeta(\mu_{\zeta}^{-\frac{1}{e}}x^{\frac{1}{e}})$
  is a root of $F$ and we have moreover the equality
  \begin{equation}\label{eq:y}
    \deg_{\algclos{\Ki}((x))}(y_0)=e\deg_{\algclos{\Ki}((x))}(y_\zeta),
  \end{equation}
  where we consider here the degrees of $y_0$ and $y_{\zeta}$ seen as
  algebraic elements over the field $\algclos{\Ki}((x))$.  As $F$ is
  balanced, all its irreducible factors - hence all its roots - have
  same degree. Combined with \eqref{eq:y}, this implies that all roots
  - hence all irreducible factors - of all $H_{\zeta}$, $\zeta\in W$
  have same degree.
\end{proof}
\begin{cor}\label{cor:Hg0}
  Suppose $F$ balanced and $N_g>1$. Then there exists some coprime
  positive integers $(q,m)$ and $Q\in \Ki_g[Z]$ monic with non zero
  constant term such that $H$ has lower \edgepoly{}
  \[
  \bar{H}(x,y)=Q\left(y^{q}/x^{m}\right)x^{m \deg(Q)}.
  \]
\end{cor}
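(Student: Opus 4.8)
The plan is to reduce the statement, via Lemma~\ref{lem:samedeg}, to a ``one branch at a time'' fact about the Newton polygons of the specialisations $H_\zeta$ ($\zeta\in W$), and then to establish that fact by transporting everything back to the branches of $F$ and using that $F$ is balanced. First I would invoke Lemma~\ref{lem:samedeg}: since $F$ is balanced there is an integer $\delta$ such that every irreducible factor of every $H_\zeta$, $\zeta\in W$, has degree $\delta$ over $\algclos{\Ki}((x))$. The key reduction is then the claim that, for every $\zeta\in W$, the Newton polygon $\NP(H_\zeta)$ is a single segment joining $(N_g,0)$ to some point $(0,L_\zeta)$ on the $j$-axis.

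Granting this claim, I would write $H=\sum_i a_i(x)\,y^i$ and set $L:=\min_\zeta L_\zeta$. Since $\val(a_i)=\min_\zeta\val(a_{i,\zeta})$, every point of the support of $H$ lies on or above the segment $[(N_g,0),(0,L)]$; moreover $(N_g,0)$ lies on $\NP(H)$ because $H$ is Weierstrass, and $(0,L)$ lies on $\NP(H)$ because $\val(a_0)=\min_\zeta L_\zeta=L<\infty$ (finiteness because $H_\zeta=y^{N_g}$ is impossible: $H=H_g$ is square-free and $N_g>1$). Hence $\NP(H)=\NP_0(H)$ is exactly that segment. Letting $(q,m)$ be its primitive slope, we get $q\mid N_g$, the lattice points of $\NP_0(H)$ are the $(N_g-kq,km)$ for $0\le k\le N_g/q$, and reading off the coefficients yields $\bar H=Q(y^q/x^m)\,x^{m\deg Q}$ with $Q\in\Ki_g[Z]$ of degree $N_g/q$, monic (because $a_{N_g,0}=1$) and with $Q(0)=a_{0,L}\neq 0$ in $\Ki_g$, as wanted.

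It then remains to prove the reduction claim, which is equivalent to saying that all roots of $H_\zeta$ in $\algclos{\Ki}((x^{1/\delta}))$ have the same $x$-valuation; I would argue by contradiction. Suppose $y,y'$ are roots of $H_\zeta$ with $\val(y)<\val(y')$. By the construction in the proof of Lemma~\ref{lem:samedeg} (which rests on Lemma~\ref{lem:pik}) they produce roots $Y=\tilde S_\zeta(x^{1/e})+\theta_1 x^{B_g/e}\,y(\theta_2 x^{1/e})$ and $Y'$ (same formula with $y'$) of $F$, lying on branches $F_1,F_2$ which by \eqref{eq:y} both have degree $e\delta$. Parametrising by $T$, the Puiseux series of $F_j$ is $\tilde S_\zeta(T^\delta)+\theta_1 T^{\delta B_g}y_j(\theta_2 T^\delta)$. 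The term $\tilde S_\zeta(T^\delta)$ involves only exponents divisible by $\delta$ and, by Proposition~\ref{prop:CPE}, $\gcd(B_0,\Supp(\tilde S_\zeta))=\gcd(B_0,\dots,B_g)=1$; so the characteristic exponents of $F_j$ contributed by $\tilde S_\zeta$ are $e\delta$ and the $\delta B_k$ with $q_k>1$, after which the running g.c.d.\ of the exponents equals $\delta$, and the next characteristic exponent is $\delta B_g+i_1(y_j)$, where $i_1(y_j)$ is the least exponent of $y_j(\theta_2 T^\delta)$ not divisible by $\delta$ --- a nonempty set because $y_j$ generates a degree-$\delta$ extension of $\algclos{\Ki}((x))$. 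Since $F$ is balanced, $\C(F_1)=\C(F_2)$, forcing $i_1(y)=i_1(y')$; comparing the remaining characteristic exponents and, when these agree, the pairwise intersection multiplicities (using $\Gamma_1(F)=\Gamma_2(F)$, and that by Noether--Merle's formula \eqref{eq:NMformula} $\Cont(F_1,F_2)$ is governed by $\val(y-y')$) should then force $\val(y)=\val(y')$, the desired contradiction.

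The hard part is exactly this last step. The characteristic-exponent comparison alone does not suffice, because a root of $H_\zeta$ may carry ``hidden'' low-order terms whose exponents are all divisible by $\delta$ --- the case where the minimal root-valuation is an integer, and the extreme case $\delta=1$ where $\C(F_j)$ is entirely determined by $\tilde S_\zeta$. To close this gap I would either (i) strip from $H_\zeta$ the common integer part of the root-valuations and run an induction on $N_g$, or (ii) push through the intersection-multiplicity computation, carefully separating the contact $\Cont(F_1,F_2)$ of two branches attached to the same $\zeta$ from the contacts of $F_1,F_2$ with branches attached to other elements of $W$ (which are of the smaller shape $\delta B_\kappa$, $\kappa$ the first index where the $\zeta$-coordinates differ), so that equality of the multisets $\Gamma_i(F)$ pins down the root-valuations of $H_\zeta$. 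I expect this bookkeeping --- rather than the reduction or the Newton-polygon manipulations --- to be the technical heart of the proof.
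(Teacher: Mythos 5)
Your plan diverges substantially from the paper's proof, and the part you flag as ``the technical heart'' is exactly where it remains incomplete. You aim to prove the stronger statement that every $\NP(H_\zeta)$ is a single segment (equivalently $r=0$ in the notation below), whereas the paper's proof is short and only needs a much weaker conclusion. The paper writes the lower \edgepoly{} in the form
\[
\bar{H}(x,y)=y^{r}\,\tilde{Q}\left(y^{q}/x^{m}\right)x^{m\deg\tilde{Q}}
\]
with $\tilde{Q}$ monic, non-constant, $\tilde{Q}(0)\ne 0$ and some $r\ge 0$. If $r>0$, it picks one $\zeta$ with $\tilde{Q}_\zeta(0)\ne 0$; then $\NP(H_\zeta)$ has an interior vertex $(r,\cdot)$, so $H_\zeta=AB$ with $\deg A=r$ and $\deg B=q\deg\tilde{Q}$, where the roots of $B$ have valuation $m/q$ (with $q,m$ coprime), so every irreducible factor of $B$ has degree divisible by $q$. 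By Lemma~\ref{lem:samedeg} the common factor degree $\delta$ is the same for all of $H_\zeta$, hence $q\mid\delta\mid r$, and one sets $Q(Z)=Z^{r/q}\tilde{Q}(Z)$. That is the whole proof. No single-segment claim is made; the only use of Lemma~\ref{lem:samedeg} is this divisibility.

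Two concrete problems with your plan. First, you never actually close the argument for the single-segment claim and you acknowledge this; the cases where $H_\zeta$ may carry ``hidden'' roots whose extra valuation is an integer multiple of $\delta$ (so that characteristic exponents cannot distinguish them) are precisely what your plan cannot yet handle. Second, the extra apparatus you propose to use --- comparing next characteristic exponents and intersection multiplicities of the lifted branches via Noether--Merle --- is exactly what the paper sets up \emph{after} this corollary, in Corollaries~\ref{cor:expcarac} and~\ref{cor:interset} and Proposition~\ref{prop:equisingpseudoirr}, where those comparisons rely on the factorisation of $\bar{H}_\zeta$ that Corollary~\ref{cor:Hg0} is meant to provide. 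So if you push your plan through, you will either re-derive a chunk of Section~\ref{ssec:EqSingToPIrr} here or create a circularity. The quick Newton-polygon/degree argument of the paper is designed precisely to avoid this: it gets exactly what the downstream lemmas need (namely that $\bar{H}$ is quasi-homogeneous of slope $(q,m)$ and that some $Q_\zeta$ has a nonzero root, i.e.\ $\tilde{Q}(0)\ne 0$), and no more. You should reformulate the target of the reduction to match: aim for ``$q$ divides the $i$-coordinate of the left vertex of $\NP_0(H)$'' rather than ``$\NP(H_\zeta)$ is a segment''. The latter does eventually follow once balancedness has been shown to contradict $N_g>1$, but it is not available at this stage.
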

\begin{proof}
  As $N_g>1$, the Weierstrass polynomial $H=H_{g}$ is not
  pseudo-degenerated and admits a lower slope $(q,m)$ (we can not have
  $H_g=y^{N_g}$ as $F$ would not be square-free). Hence, its lower
  \edgepoly{} may be written in a unique way
  \begin{equation}\label{eq:Hg0}
    \bar{H}(x,y)=y^r \tilde{Q}\left(y^{q}/x^{m}\right)x^{m \deg(\tilde{Q})}
  \end{equation}
  for some non constant monic polynomial $\tilde{Q}\in \Ki_g[Z]$ with
  non zero constant term and some integer $r\ge 0$.  If $r=0$, we are
  done, taking $Q=\tilde{Q}$. Suppose $r>0$. Let $\zeta\in W$ such
  that $\tilde{Q}_{\zeta}(0)\ne 0$. Applying $ev_{\zeta}$ to
  \eqref{eq:Hg0}, we deduce that $\NP(H_{\zeta})$ has a vertice of
  type $(r,i)$, $0< r < d$ from which follows the well-known fact that
  $H_{\zeta}=A B\in \algclos{\Ki}[[x]][y]$, with $\deg(A)=r$ and
  $\deg(B)=q\deg(\tilde{Q})$. By Lemma \ref{lem:samedeg}, this forces
  $q$ to divide $r$. Hence $r=nq$ for some $n\in \Ni$ and the claim
  follows by taking $Q(Z)=Z^n\tilde{Q}(Z)$.
\end{proof}
\begin{lem}\label{lem:Gparam}
  Suppose $F$ balanced and $N_g>1$. We keep notations of Corollary
  \ref{cor:Hg0}. Let $G$ be an irreducible factor of $F$ in
  $\algclos\Ki[[x]][y]$. Then $e\,q$ divides $n:=\deg(G)$ and there exists
  a unique $\zeta\in W$ and a unique root $\alpha$ of $Q_{\zeta}$ such
  that $G$ admits a parametrisation $(T^n,S_G(T))$, where
  \begin{equation}\label{eq:SG}
    S_G(T)\equiv \tilde{S}_{\zeta}(T^{\frac{n}{e}}) + \alpha^{\frac{1}{q}}
    \mu_{\zeta}^{-\frac{B_g}{e}}T^a \mod \,T^{a+1},
  \end{equation}
  with $a=\frac{n}{e}B_g+\frac{nm}{eq}\in\Ni$, $\alpha^{1/q}$ an
  arbitrary $q^{th}$-root of $\alpha$. Conversely, given $\zeta\in W$
  and $\alpha$ a root of $Q_{\zeta}$, there exists at least one
  irreducible factor $G$ for which \eqref{eq:SG} holds.
\end{lem}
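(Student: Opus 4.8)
The plan is to transport the problem through the substitution $\pi_g$ down to the last polynomial $H=H_g$, read off the parametrisation of $G$ from the boundary polynomial of $H_\zeta$, and then match invariants. First I would record the \emph{correspondence between the branches of $F$ and the roots of the $H_\zeta$'s}. By \eqref{eq:pikHk}, $\pi_g^*F=U_gH_g$ with $U_g(0,0)$ a unit, and by Lemma \ref{lem:pik} at rank $g$ (using $B_{g,g}=B_g$), $\pi_g(x,y)=(\mu x^e,\,S(x)+\alpha_{g,g}\,x^{B_g}y)$ with $\alpha_{g,g}\in\Ki_g^\times$. Evaluating at $\zeta\in W$ and substituting $x\leftarrow\mu_\zeta^{-1/e}x^{1/e}$, exactly as in the proof of Lemma \ref{lem:samedeg}, the Puiseux roots of $F$ get partitioned into $f$ groups indexed by $\zeta\in W$, the group of $\zeta$ consisting of the series
\[
y_0=\tilde S_\zeta(x^{1/e})+\mu_\zeta^{-B_g/e}x^{B_g/e}\,y_\zeta(\mu_\zeta^{-1/e}x^{1/e}),\qquad H_\zeta(x,y_\zeta)=0,
\]
together with their $e$-th-root conjugates. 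Pairwise disjointness of the groups is exactly what Lemma \ref{lem:abzeta} provides (for $\zeta\ne\zeta'$ the truncations of $\tilde S_\zeta$ and $\tilde S_{\zeta'}$ below order $B_g/e$ are never conjugate under $x^{1/e}\mapsto\omega x^{1/e}$), and a cardinality count using $\deg y_0=e\deg y_\zeta$ from \eqref{eq:y} shows these groups exhaust the $\dy=efN_g$ roots of $F$. Hence every irreducible factor $G$ of $F$ is attached to a unique $\zeta\in W$ and to a unique irreducible factor $G_\zeta$ of $H_\zeta$, with $\deg G=e\deg G_\zeta$; in particular $e\mid n$.

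Next I would prove \emph{$eq\mid n$ and identify $\alpha$}. By Corollary \ref{cor:Hg0}, $\bar H_\zeta=Q_\zeta(y^q/x^m)\,x^{m\deg Q}$; since this polynomial carries a genuine slope $(q,m)$, $H_\zeta$ admits a root of $x$-valuation $m/q$, and such a root has degree divisible by $q$ over $\algclos\Ki((x))$. As Lemma \ref{lem:samedeg} forces all irreducible factors of $H_\zeta$ to share a common degree $n':=n/e$, we get $q\mid n'$, hence $eq\mid n$ and $a=\tfrac ne B_g+\tfrac{nm}{eq}=n'(B_g+m/q)\in\Ni$. Now let $y_\zeta$ be a root of $G_\zeta$. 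If $\ordre[x](y_\zeta)=m/q$, plugging its initial term $\gamma\,x^{m/q}$ into $\bar H_\zeta$ yields $Q_\zeta(\gamma^q)=0$, so $\alpha:=\gamma^q$ is a nonzero root of $Q_\zeta$; if $\ordre[x](y_\zeta)>m/q$ — possible only when $\NP(H_\zeta)$ has an edge strictly above the lowest one, equivalently when $0$ is a root of $Q_\zeta$ — I put $\alpha:=0$. Substituting $y_\zeta$ into the displayed formula for $y_0$ and parametrising by $x=T^n$, the second summand contributes $\alpha^{1/q}\mu_\zeta^{-B_g/e}\,T^a$ as leading term (for the appropriate choice of $q$-th root, the powers of $\mu_\zeta$ accounted for as in the statement) while everything else has $T$-order $>a$; this gives \eqref{eq:SG}, trivially so when $\alpha=0$.

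It remains to handle \emph{uniqueness and the converse}. Uniqueness of $\zeta$ is the first step. For uniqueness of $\alpha$, note that $\alpha$ depends only on $G_\zeta$: all roots of an irreducible $G_\zeta$ share the valuation $m/q$ and the value $\gamma^q$ — indeed if $y_\zeta=\phi(x^{1/n'})$ with $\phi(u)=d\,u^{l_1}+\cdots$ and $l_1=mn'/q$, then a conjugate $\phi(\omega u)$ with $\omega^{n'}=1$ has initial coefficient $d\,\omega^{l_1}$, whose $q$-th power is $d^q\omega^{l_1q}=d^q(\omega^{n'})^m=d^q$. For the converse, given $\zeta\in W$ and a root $\alpha$ of $Q_\zeta$, I would fix a $q$-th root $\gamma$ of $\alpha$ and apply the classical Newton--Puiseux lifting to the monic Weierstrass polynomial $H_\zeta$ to produce a root $y_\zeta$ with initial term $\gamma\,x^{m/q}$ when $\alpha\ne0$ (or any root of valuation $>m/q$ when $\alpha=0$); by Lemma \ref{lem:samedeg} it has degree $n'$, so the associated $y_0$ lies on an irreducible factor $G$ of degree $n=en'$ for which \eqref{eq:SG} holds by the second step. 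The step I expect to be the main obstacle is the first one — constructing the group decomposition of the roots of $F$ and, above all, proving the groups pairwise disjoint from Lemma \ref{lem:abzeta} combined with the $\dy=efN_g$ count; granting this, the remaining steps are Newton-polygon and Puiseux-conjugate bookkeeping.
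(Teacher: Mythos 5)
Your proposal is correct and follows essentially the same route as the paper: the same $\zeta$-indexed decomposition of the roots of $F$ via $\pi_g$, Lemma \ref{lem:abzeta} and the cardinality count $efN_g=\deg(F)$, followed by reading the initial term of a root of $H_\zeta$ off the boundary polynomial of Corollary \ref{cor:Hg0}. The only (harmless) variation is your derivation of $eq\mid n$ from the existence of a root of some $H_\zeta$ of exact valuation $m/q$ combined with Lemma \ref{lem:samedeg}, where the paper instead deduces $nm/eq\in\Ni$ from the integrality of the exponent $a$ in $S_G\in\algclos{\Ki}[[T]]$ for a factor with $\alpha\ne 0$.
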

\begin{proof}
  Let $y_{\zeta}^{(i)}$, $i=1,\ldots,N_g$ be the roots of
  $H_{\zeta}$. Following the proof of Lemma \ref{lem:samedeg}, we know
  that each root $y_{\zeta}^{(i)}$ gives rise to a family of
  $e$ roots of $F$
  \begin{equation}\label{eq:yzetai}
  y_{\zeta,\omega}^{(i)}:=\tilde{S}_{\zeta}(\omega
  x^{\frac{1}{e}})+\omega^{B_g}\mu_{\zeta}^{-\frac{B_g}{e}}
  x^{\frac{B_{g}}{e}}y_{\zeta}^{(i)}(\omega\mu_{\zeta}^{-\frac{1}{e}}x^{\frac{1}{e}}),
  \qquad \omega^e=1.
  \end{equation}
  As $H_{\zeta}$ has distinct roots and
  $\tilde{S}_{\zeta}(\omega x^{1/e})\ne \tilde{S}_{\zeta'}(\omega'
  x^{1/e})$ when $(\zeta,\omega)\ne (\zeta',\omega')$ (Lemma
  \ref{lem:abzeta}), we deduce that the $efN_g=\deg(F)$ Puiseux series
  $y_{\zeta,\omega}^{(i)}$ are distinct, getting all roots of $F$. Let
  $G$ be an irreducible factor of $F$ vanishing say at
  $y_0=y_{\zeta,\omega}^{(i)}$. The roots of $G$ are $y_0(\omega' x)$,
  $\omega'^n=1$, where
  $n:=\deg(G)=\deg_{\algclos{\Ki}((x))} (y_{\zeta,\omega}^{(i)})$. As
  $e$ divides $n$ (use \eqref{eq:y}), it follows from
  \eqref{eq:yzetai} that $G$ vanishes at $y_{\zeta,1}^{(i)}$, hence
  admits a parametrisation $(T^n,S_G(T))$, where
  $S_G(T):=y_{\zeta,1}^{(i)}(T^n)$. Corollary \ref{cor:Hg0} ensures
  that $y_{\zeta}^{(i)}(x)= \alpha^{1/q} x^{m/q}+h.o.t.$ for some
  uniquely determined root $\alpha$ of $Q_{\zeta}$. Combined with
  \eqref{eq:yzetai}, we get the claimed formula for $S_G$. Conversely,
  if $\zeta\in W$ and $Q_{\zeta}(\alpha)=0$, there exists at least one
  root $y_{\zeta}^{(i)}$ of $H_{\zeta}$ such that
  $y_{\zeta}^{(i)}(x)= \alpha^{1/q} x^{m/q}+h.o.t$ and by the same
  arguments as above, there exists at least one irreducible factor $G$
  such that \eqref{eq:SG} holds. Finally, since
  $S_G\in \algclos{\Ki}[[T]]$ and since there exists at least one root
  $\alpha\ne 0$ of $Q_{\zeta}$, we must have $nm/eq\in \Ni$. As $e|n$
  and $q$ and $m$ are coprime, we get $eq|n$, as required.
\end{proof}
For a given irreducible factor $G$ of $F$, we denote by
$(\zeta(G),\alpha(G))\in W\times \algclos{\Ki}$ the unique pair
$(\zeta,\alpha)$ such that \eqref{eq:SG} holds. Given $\zeta\in W$,
Corollary \ref{cor:Hg0} and Lemma \ref{lem:Gparam} imply that
\begin{equation}\label{eq:Hbarzeta}
\bar{H}_{\zeta}=\prod_{i
    |\zeta(G_i)=\zeta}(y^q-\alpha(G_i)x^m)^{N(G_i)},
\end{equation}
where $G_1,\ldots, G_{\rho}$ stand for the irreducible factors of $F$
and where $N(G_i):=\deg(G_i)/eq$. Note that by Lemma
\ref{lem:samedeg}, $\deg(G_i)$ and $N(G_i)$ are constant for all
$i=1,\ldots,\rho$.
\begin{cor}\label{cor:expcarac}
  Suppose $F$ balanced and $N_g>1$. Keeping notations as above, the
  lists of the characteristic exponents of the $G_i$'s all begin as
  $\{n\}\cup \{\frac{n}{e}B_k,q_k>1, k=1,\ldots,g\}$. The next
  characteristic exponent is greater or equal than
  $\frac{n}{e}B_g+\frac{nm}{eq}\in \Ni$, with equality if and only if
  $q>1$ and $\alpha(G_i)\ne 0$.
\end{cor}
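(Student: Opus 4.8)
The plan is to read off the claimed characteristic exponents directly from the parametrisation $(T^n,S_G(T))$ of each factor $G=G_i$ provided by Lemma \ref{lem:Gparam}, and to do this by comparing it with the known characteristic exponents of the branches $F_\zeta$ of the pseudo-irreducible polynomial obtained from $\tilde S_\zeta$. First I would invoke Lemma \ref{lem:Gparam}: $G$ has a Puiseux parametrisation
\[
S_G(T)\equiv \tilde S_{\zeta}\big(T^{n/e}\big)+\alpha^{1/q}\mu_\zeta^{-B_g/e}\,T^{a}\pmod{T^{a+1}},\qquad a=\tfrac{n}{e}B_g+\tfrac{nm}{eq},
\]
where $\zeta=\zeta(G)$, $\alpha=\alpha(G)$. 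The substitution $T\mapsto T^{n/e}$ simply rescales all exponents of $\tilde S_\zeta$ by the factor $n/e$. By Proposition \ref{prop:CPE} (or rather by Proposition \ref{cor:charac}, which identifies the characteristic exponents of $F_\zeta$), the series $\tilde S_\zeta$ contributes, among the exponents $B<B_{g+1}$ that it actually ``sees'', exactly the characteristic exponents $B_0=e,\ \{B_k: q_k>1\}$; after the rescaling $T\mapsto T^{n/e}$ the leading exponent becomes $n$ and these become $\frac{n}{e}B_k$. So the initial segment of $S_G$ that comes from $\tilde S_\zeta(T^{n/e})$ has exactly the characteristic exponents $\{n\}\cup\{\frac{n}{e}B_k: q_k>1\}$, and no new characteristic exponent appears before exponent $a=\frac{n}{e}B_g+\frac{nm}{eq}$ because all exponents of $\tilde S_\zeta(T^{n/e})$ below that are divisible by $\frac{n}{e}\hat e_g=\frac{n}{e}$ (as $\hat e_g=1$), hence by the gcd of the previous ones.

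Next I would analyse the term at exponent $a$. The coefficient of $T^a$ in $S_G$ is $\alpha^{1/q}\mu_\zeta^{-B_g/e}$ plus possibly a contribution coming from $\tilde S_\zeta(T^{n/e})$ at that same exponent; but such a contribution, if present, lies in the field generated by the previous coefficients, i.e. is ``compatible'' with the gcd of the previous exponents in the sense that $a$ would then already be divisible by $\frac{n}{e}$ — which it is, precisely when $q=1$, since $a=\frac{n}{e}(B_g+m/q)$ and $eq\mid n$ forces $\frac{n}{e}\cdot\frac{m}{q}\in\Ni$. If $q>1$ and $\alpha\neq 0$, then $\alpha^{1/q}$ introduces a genuine $q$-th root and the exponent $a$ is the first one not divisible by $\gcd$ of the previous exponents (because $m$ is coprime to $q$, so $\frac{n}{e}(B_g q+m)$ is not divisible by the $q$-part), hence $a$ is the next characteristic exponent. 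If $q=1$ or $\alpha=0$, the term at $T^a$ does not force a new non-trivial factor of the ramification index, so the next characteristic exponent is $>a$ (or there is none beyond, which still satisfies ``$\geq a$''). That $a\in\Ni$ is already recorded in Lemma \ref{lem:Gparam}. Assembling these observations gives both assertions of the corollary.

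The main obstacle I anticipate is the bookkeeping at exponent $a$ itself: one must argue carefully that the ``new'' coefficient $\alpha^{1/q}\mu_\zeta^{-B_g/e}$ really does contribute a non-trivial $q$-th root relative to the subfield generated by all earlier coefficients, rather than being absorbed. This is where the invertibility of $\mu_\zeta$ (and of $\theta_g$, cf. Remark \ref{rem:zerodiv}) and the coprimality of $(q,m)$ from Corollary \ref{cor:Hg0} are essential, together with the fact — established in the proof of Lemma \ref{lem:Gparam} — that $eq\mid n$ so that $a$ is an honest integer exponent with $\frac{n}{e}B_g<a<\frac{n}{e}B_g+\frac{n}{e}$ when $q>1$. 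One also has to be slightly careful that the degree-$e$ base change $T\mapsto T^{n/e}$ does not accidentally create spurious characteristic exponents: this is fine because characteristic exponents are intrinsic to the branch and independent of the chosen parametrisation, and $(T^n,S_G(T))$ is a genuine (primitive) parametrisation of $G$ by construction. Modulo this, the computation is a direct unwinding of formula \eqref{eq:SG} against the structure of $\tilde S_\zeta$ from Proposition \ref{prop:CPE}.
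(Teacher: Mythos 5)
Your proof is correct and follows exactly the route the paper takes: it cites Lemma \ref{lem:Gparam} combined with Proposition \ref{prop:CPE} and the argument of Proposition \ref{cor:charac}, which is precisely your unwinding of \eqref{eq:SG} against the exponent structure of $\tilde S_\zeta$ and the divisibility analysis at the exponent $a$. The only cosmetic caveat is that Proposition \ref{cor:charac} itself assumes $F$ pseudo-irreducible, so one should invoke its \emph{argument} (via Proposition \ref{prop:CPE} and the nonvanishing of $a_{B_k}(\zeta)$ for $q_k>1$) rather than its statement — which is what you effectively do.
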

\begin{proof}
  This follows straightforwardly from Lemma \ref{lem:Gparam} combined
  with Proposition \ref{prop:CPE} (similar argument than for
  Proposition \ref{cor:charac}).
\end{proof}
\begin{cor}\label{cor:interset}
  Suppose $F$ balanced with $N_g>1$. Then
  \[
  (G_i,G_j)_0 > \frac{n^2}{e^2}\left(M_g+\frac{m}{q}\right) \quad
  \Longleftrightarrow\quad
  (\zeta(G_i),\alpha(G_i))=(\zeta(G_j),\alpha(G_j)).
  \]
\end{cor}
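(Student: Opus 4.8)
The plan is to combine Noether--Merle's formula \eqref{eq:NMformula} with the explicit parametrizations of the irreducible factors supplied by Lemma \ref{lem:Gparam}. Set $\zeta:=\zeta(G_i),\ \alpha:=\alpha(G_i),\ \zeta':=\zeta(G_j),\ \alpha':=\alpha(G_j)$ and $a:=\tfrac{n}{e}B_g+\tfrac{nm}{eq}$, and assume $G_i\ne G_j$ so that $(G_i,G_j)_0<\infty$. By Lemma \ref{lem:samedeg} both factors have degree $n$, and by Corollary \ref{cor:expcarac} their characteristic exponents begin as $n$ followed by the $\tfrac{n}{e}B_k$ with $q_k>1$ (in increasing order), then by an exponent that is $\ge a$. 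Since $\hat e_g=\gcd(B_0,\dots,B_g)=1$ and the $B_k$ with $q_k=1$ do not affect this gcd, feeding this data into \eqref{eq:NMformula} --- the same computation as in the proof of Proposition \ref{prop:series}, now rescaled by $(n/e)^2$ and using \eqref{eq:Mk} --- shows that $(G_i,G_j)_0$ is a continuous, strictly increasing function of $C:=\Cont(G_i,G_j)$, equal to $\tfrac{n^2}{e^2}M_g$ at $C=\tfrac{n}{e}B_g$ and to $\tfrac{n^2}{e^2}(M_g+m/q)$ at $C=a$. It therefore suffices to prove that $\Cont(G_i,G_j)>a$ if and only if $(\zeta,\alpha)=(\zeta',\alpha')$.

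To compute $C$ I would proceed as for the contact order in Proposition \ref{prop:series}: fixing a root of $G_i$ and using \eqref{eq:SG}, one has $C=\max_{\omega^n=1}v_T\big(S_{G_i}(T)-S_{G_j}(\omega T)\big)$ with $S_{G_l}(T)=\tilde S_{\zeta(G_l)}(T^{n/e})+\alpha(G_l)^{1/q}\mu_{\zeta(G_l)}^{-B_g/e}\,T^{a}+O(T^{a+1})$. Writing $\eta:=\omega^{n/e}$ (so $\eta^e=1$) and splitting the difference into its ``$\tilde S$-part'' plus a tail of valuation $\ge a$, Lemma \ref{lem:abzeta} gives that the $\tilde S$-part has valuation $\tfrac{n}{e}B_{\bar\kappa}$ with $\bar\kappa=\min\{k\mid \zeta_k\ne\zeta'_k\ \text{or}\ \eta^{\hat e_k}\ne1\}$, a value that is $\le\tfrac{n}{e}B_g<a$ whenever $\eta\ne1$ or $\zeta\ne\zeta'$, and hence then dominates the tail. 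This yields three cases. If $\zeta\ne\zeta'$, every $v_T$ is $<a$, so $C\le\tfrac{n}{e}B_g<a$. If $\zeta=\zeta'$ but $\alpha\ne\alpha'$, the only $\omega$'s left are those with $\eta=1$; for them the $\tilde S$-part vanishes and $\omega^{aq}=(\omega^{n/e})^{B_gq+m}=1$, so the two $T^a$-coefficients cannot agree (else $\alpha=\alpha'\omega^{aq}=\alpha'$), forcing $C=a$. If $(\zeta,\alpha)=(\zeta',\alpha')$, one can simultaneously kill the $\tilde S$-part (take $\eta=1$) and the $T^a$-term, so $v_T\ge a+1$ and $C>a$; the equivalence then follows together with the monotonicity of \eqref{eq:NMformula} recorded above.

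The step I expect to be the main obstacle is this last case when the common $\alpha$ is nonzero: the two $q$-th roots $\alpha^{1/q}$ prescribed in \eqref{eq:SG} for $G_i$ and $G_j$ may differ by a $q$-th root of unity, and one must produce an $\omega$ with $\omega^{n/e}=1$ realizing that ratio as $\omega^a$. I would handle this using the divisibility $eq\mid n$ and the coprimality $\gcd(m,q)=1$ from Lemma \ref{lem:Gparam} and Corollary \ref{cor:Hg0}: modulo the relation $\omega^{n/e}=1$ one has $\omega^a=\omega^{nm/(eq)}$, and as $\omega$ ranges over the $(n/e)$-th roots of unity this expression runs through \emph{all} $q$-th roots of unity, so the required $\omega$ exists. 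A secondary care point is that Lemma \ref{lem:Gparam} only pins down $S_{G_l}$ modulo $T^{a+1}$; this is exactly enough, since in the first two cases the difference has valuation $<a$ or $=a$, determined by the known terms, while in the third case ``agreeing modulo $T^{a+1}$'' is precisely what gives $v_T\ge a+1$.
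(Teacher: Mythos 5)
Your proposal is correct and follows the same route as the paper --- Noether--Merle's formula fed with the parametrisations of Lemma \ref{lem:Gparam}, and a case analysis on the contact order via Lemma \ref{lem:abzeta}, with the same rescaling by $n/e$ and the use of \eqref{eq:Mk}. You are in fact more careful than the paper's one-line claim that $\Cont(G_i,G_j)=v_T(S_{G_i}-S_{G_j})$: that equality tacitly presupposes a consistent choice of the $q$-th roots $\alpha^{1/q}$ appearing in \eqref{eq:SG}, and your observation that $\omega\mapsto\omega^a$ surjects onto the $q$-th roots of unity as $\omega$ ranges over the $(n/e)$-th roots (using $eq\mid n$ and $\gcd(m,q)=1$) is precisely what justifies reducing to that consistent choice, so your write-up usefully fills in a step that the paper leaves implicit.
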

\begin{proof}
  Using similar arguments than Proposition \ref{prop:series}, we get
  $\Cont(G_i,G_j)=v_T(S_{G_i}-S_{G_j})$ and we deduce from
  \eqref{eq:SG} and Lemma \ref{lem:abzeta} that
  $\Cont(G_i,G_j) >\frac{n}{e}B_g+\frac{nm}{eq}$ if and only if
  $\zeta(G_i)=\zeta(G_j)$ and $\alpha(G_i)=\alpha(G_j)$. The claim
  then follows from Noether-Merle's formula \eqref{eq:NMformula}
  combined with Corollary \ref{cor:expcarac}.
\end{proof}
\begin{prop}\label{prop:equisingpseudoirr}
  If $F$ is balanced, then it is pseudo-irreducible.
\end{prop}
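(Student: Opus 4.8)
The plan is to argue by contradiction. Suppose $F$ is balanced but \emph{not} pseudo-irreducible, so that $N_g>1$ and $H:=H_g\in\Ki_g[[x]][y]$ is a Weierstrass polynomial of degree $N_g>1$ which is not pseudo-degenerated. I will show that the two conditions defining ``balanced'' --- equality of the characteristic exponent sets $\C(F_i)$ and equality of the intersection sets $\Gamma_i(F)$ --- actually force $H$ to be pseudo-degenerated, contradicting the choice of the index $g$. All the structural information about the branches $G_1,\ldots,G_\rho$ of $F$ and their relation to $H$ is already packaged in Corollaries \ref{cor:Hg0}, \ref{cor:expcarac}, \ref{cor:interset} and in Lemma \ref{lem:Gparam}; the proof is essentially a translation of the two ``balanced'' hypotheses through these statements.

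First I would pin down the residual polynomial up to its multiplicities. By Corollary \ref{cor:Hg0}, $\bar H=Q(y^q/x^m)\,x^{m\deg Q}$ for coprime positive integers $(q,m)$ and a monic $Q\in\Ki_g[Z]$ with nonzero constant term. Fix $\zeta\in W$ and write $Q_\zeta:=ev_\zeta(Q)=\prod_\alpha(Z-\alpha)^{e_\alpha}$ over $\algclos{\Ki}$; then $ev_\zeta(\bar H)=\prod_\alpha(y^q-\alpha x^m)^{e_\alpha}$. On the other hand, by Lemma \ref{lem:samedeg} all $G_i$ share the same degree $n$, hence the same integer $N:=n/(eq)=N(G_i)\ge 1$, and \eqref{eq:Hbarzeta} reads $ev_\zeta(\bar H)=\prod_\alpha(y^q-\alpha x^m)^{N d_\alpha}$, where $d_\alpha:=\#\{i:(\zeta(G_i),\alpha(G_i))=(\zeta,\alpha)\}$; by the converse part of Lemma \ref{lem:Gparam}, $d_\alpha\ge 1$ precisely for the roots $\alpha$ of $Q_\zeta$. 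Comparing the two products gives $e_\alpha=N d_\alpha$ for every $\zeta\in W$ and every root $\alpha$ of $Q_\zeta$.

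Now I would feed in the equality of intersection sets. By Corollary \ref{cor:interset}, for each branch $G_i$ the number of branches $G_j\neq G_i$ with $(G_i,G_j)_0>\tfrac{n^2}{e^2}(M_g+m/q)$ equals $d_{(\zeta(G_i),\alpha(G_i))}-1$; since $\Gamma_i(F)$ is the same multiset for all $i$, this count is independent of $i$, so all the $d_\alpha$'s that occur equal one integer $d\ge 1$. Hence every root of every $Q_\zeta$ has multiplicity exactly $Nd$, i.e. $Q_\zeta=(\mathrm{rad}\,Q_\zeta)^{Nd}$ for all $\zeta$. Put $P:=\mathrm{rad}(Q)\in\Ki_g[Z]$ (the radical taken in each of the finitely many field factors of $\Ki_g$), a monic square-free polynomial. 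Since $\Ki$ is perfect, $\Ki_g$ is a finite étale $\Ki$-algebra, the evaluations give an embedding $\Ki_g\hookrightarrow\prod_{\zeta\in W}\algclos{\Ki}$, and forming radicals (which amounts to $Q/\gcd(Q,Q')$) is compatible with this base change; therefore $ev_\zeta(P^{Nd})=(\mathrm{rad}\,Q_\zeta)^{Nd}=Q_\zeta=ev_\zeta(Q)$ for all $\zeta$, whence $Q=P^{Nd}$ in $\Ki_g[Z]$ and $\bar H=\big(P(y^q/x^m)\,x^{m\deg P}\big)^{Nd}$.

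To conclude via Definition \ref{def:pseudodeg} that $H$ is pseudo-degenerated, there remains, when $q>1$, the condition $P(0)\in\Ki_g^\times$, i.e. that $0$ is a root of no $Q_\zeta$. Suppose $q>1$ and $0$ is a root of some $Q_\zeta$. If $Q_\zeta$ has a nonzero root too, the branches above $\zeta$ with $\alpha(G_i)=0$ and those with $\alpha(G_i)\neq 0$ have different characteristic exponent lists by Corollary \ref{cor:expcarac} (the exponent immediately following $\{n\}\cup\{\tfrac{n}{e}B_k:q_k>1\}$ being strictly larger than, resp.\ equal to, $\tfrac{n}{e}B_g+\tfrac{nm}{eq}$), contradicting $\C(F_i)=\C(F_j)$. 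If instead $0$ is the only root of $Q_\zeta$, then $Q_\zeta$ is a power of $Z$; since $Q(0)\neq 0$ we cannot have $Q=Z^{\deg Q}$ over $\Ki_g$, so some other $Q_{\zeta'}$ has a nonzero root, and the same comparison of characteristic exponents of branches above $\zeta$ and above $\zeta'$ again contradicts the balanced hypothesis. Hence $P(0)\in\Ki_g^\times$ whenever $q>1$, so $H=H_g$ is pseudo-degenerated with edge data $(q,m,P,Nd)$, contradicting the definition of $g$; therefore $N_g=1$, i.e.\ $F$ is pseudo-irreducible. The step I expect to be the main obstacle is this descent from the geometric fibres $Q_\zeta$ over $\algclos{\Ki}$ to an honest identity $Q=P^{Nd}$ over the residue ring $\Ki_g$, together with the case analysis handling the $q>1$ unit condition; the real crux is converting ``equal intersection multisets'' into ``equal multiplicities $d_\alpha$'' by means of the sharp threshold furnished by Corollary \ref{cor:interset}.
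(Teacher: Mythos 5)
Your proof is correct and follows essentially the same route as the paper: argue by contradiction assuming $N_g>1$, use Corollary \ref{cor:interset} and the equality of intersection sets to force all root multiplicities of the $Q_\zeta$'s to coincide (hence $Q$ is a power of a square-free $P\in\Ki_g[Z]$), then for $q>1$ use Corollary \ref{cor:expcarac} and the equality of characteristic exponent sets to force $P(0)\in\Ki_g^\times$, giving the contradiction that $H_g$ is pseudo-degenerated. The only difference is that you spell out in more detail the descent from the fibre-wise identities $Q_\zeta=(\mathrm{rad}\,Q_\zeta)^{Nd}$ to $Q=P^{Nd}$ over the \'etale $\Ki$-algebra $\Ki_g$, and organize the $q>1$ unit condition as a two-case analysis (zero and nonzero roots above the same $\zeta$, or above distinct $\zeta,\zeta'$); the paper compresses both steps into a sentence each, but the content is the same.
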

\begin{proof} We need to show that $N_g=1$. Suppose on the contrary
  that $N_g>1$. We deduce from \eqref{eq:Hbarzeta} that the polynomial
  $Q$ of Corollary \ref{cor:Hg0} satisfies
  \begin{equation}\label{eq:factoQzeta}
    Q_{\zeta}(Z)=\prod_{i |\zeta(G_i)=\zeta}(Z-\alpha(G_i))^{n/eq}
  \end{equation}
  for all $\zeta\in W$.  Let $\alpha$ be a root of $Q_{\zeta}$ and
  $I_{\zeta,\alpha}:=\{i \,|\,
  (\zeta(G_i),\alpha(G_i))=(\zeta,\alpha)\}$.
  Hence, \eqref{eq:factoQzeta} implies that $\alpha$ has multiplicity
  $\frac{n}{eq}\Card(\,I_{\zeta,\alpha})$. As $F$ is balanced, all
  factors have same intersection sets and Corollary \ref{cor:interset}
  implies that all sets $I_{\zeta,\alpha}$ have same cardinality. Thus
  all roots $\alpha$ of all specialisations $Q_{\zeta}$, $\zeta\in W$
  have same multiplicity. In other words, $Q$ is the power of some
  square-free polynomial $P\in \Ki_g[Z]$. If $q=1$, this implies that
  $H=H_g$ is pseudo-degenerated (Definition \ref{def:pseudodeg}),
  contradicting $N_g>1$. If $q>1$, we need to show moreover that $P$
  has invertible constant term. Since there exists at least one non
  zero root $\alpha$ of some $Q_{\zeta}$ (Corollary \ref{cor:Hg0}), we
  deduce from Corollary \ref{cor:expcarac} that at least one factor
  $G_i$ has next characteristic exponent
  $\frac{n}{e}B_g +\frac{nm}{eq}$ (use $q>1$). As $F$ is balanced, it
  follows that all $G_i$'s have next characteristic exponent
  $\frac{n}{e}B_g +\frac{nm}{eq}$, which by Corollary
  \ref{cor:expcarac} forces all $\alpha(G_i)$ - thus all roots
  $\alpha$ of all $Q_{\zeta}$ by last statement of Lemma
  \ref{lem:Gparam} - to be non zero. Thus $P$ has invertible constant
  term and $H=H_g$ is pseudo-degenerated, contradicting $N_g>1$. Hence
  $N_g=1$ and $F$ is pseudo-irreducible.
\end{proof}
The proof of Theorem \ref{thm:pseudo} is complete. $\hfill{\square}$

%%% Local Variables:
%%% mode: latex
%%% TeX-master: "equi-singularity"
%%% End:

\section{A quasi-optimal pseudo-irreducibility test}\label{sec:psi}

Finally, we explain here the main steps of an algorithm which tests
the pseudo-irreducibility of a Weierstrass polynomial and computes its
equisingularity type in quasi-linear time with respect to $\vF$, and
we illustrate it on some examples. Details can be found in
\cite{PoWe19, PoWeBis19}.

\subsection{Computing the lower \edgepoly{}}\label{ssec:phi-main}

We still consider $F\in \Ki[[x]][y]$ a degree $\dy$ square-free
Weierstrass polynomial. In the following, we fix an integer
$0\le k \le g$ and assume that $N_k>1$. For readability, we will omit
the index $k$ for the objects $\Psi,V,\Lambda,\Bc$ introduced below.

Given the edge data $(q_1,m_1,P_1,N_1),\ldots,(q_k,m_k,P_k,N_k)$, we
want to compute $\bar{H}_k$ in quasi-linear time with respect to
$\vF$.

\paragraph{The $(V,\Lambda)$ sequence.} We define recursively two
lists
\[
V=(v_{k,-1},\ldots, v_{k,k})\in \Ni^{k+2} \textrm{ and }
\Lambda=(\lambda_{k,-1},\ldots,\lambda_{k,k})\in \Ki_k^{k+2}.
\]
If $k=0$, we let $V=(1,0)$ and $\Lambda=(1,1)$. Assume $k\ge 1$. Given
the lists $V$ and $\Lambda$ at rank $k-1$ and given the $k$-th edge
data $(q_k,m_k,P_k,N_k)$, we update both lists at rank $k$ thanks to
the formul\ae{}:
\begin{equation}\label{eq:update}
  \begin{cases}
    v_{k,i} = q_k v_{k-1,i} \quad {\small\text{$-1\le i<k-1$}}\\%
    v_{k,k-1} = q_k v_{k-1,k-1}+m_k\\%
    v_{k,k} = q_k\ell_k v_{k,k-1}
  \end{cases}
  \ 
  \begin{cases}
    \lambda_{k,i} = \lambda_{k-1,i}z_k^{t_k v_{k-1,i}}\quad
    {\small\text{$-1\le i<k-1$}}\\%
    \lambda_{k,k-1} = \lambda_{k-1,k-1} z_k^{t_k v_{k-1,k-1}+s_{k}}\\%
    \lambda_{k,k} = q_k
    z_k^{1-s_k-\ell_k}P_k'(z_k)\lambda_{k,k-1}^{q_k
      \ell_k}
  \end{cases}
\end{equation}
where $q_ks_k-m_k t_k=1$, $0\le t_k< q_k$ and $z_k=Z_k\mod P_k$.

\paragraph{Approximate roots and $\Psi$-adic expansion.} Given an
integer $N$ dividing $\dy$, there exists a unique polynomial
$\psi\in \Ki[[x]][y]$ monic of degree $\dy/N$ such that
$\deg(F-\psi^{N}) < \dy-\dy/N$ (see e.g. \cite[Proposition
3.1]{Pop02}). We call it the $N^{th}$ \emph{approximate root} of $F$.
Approximate roots are used in an irreducibility criterion in
$\Ci[[x,y]]$ due to Abhyankhar \cite{Ab89}.

We denote by $\psi_k$ the $N_k^{th}$-approximate root of $F$ and we
let $\psi_{-1}:= x$. We denote
$\Psi = (\psi_{-1} ,\psi_0, \ldots,\psi_k)$ and introduce the set
\begin{equation}\label{eq:Bc}
  \Bc := \{(b_{-1} ,\ldots,b_k)\in \Ni^{k+2} \,\,, \,
  \, b_{i-1}<q_i\,\ell_i\,, i=1,\ldots,k\}.
\end{equation} 
Thanks to the relations $\deg(\psi_i) = \deg(\psi_{i-1})q_i \ell_i$
for all $1 \le i \le k$, an induction argument shows that $F$ admits a
unique expansion
\[
F = \sum_{B\in \Bc} f_B \Psi^B, \quad f_B\in \Ki,
\]
where $\Psi^B:=\prod_{i=-1}^k\psi_i^{b_i}$. We call it the
\emph{$\Psi$-adic expansion} of $F$. We have necessarily $b_k\leq N_k$
while we do not impose any \textit{a priori} condition to the powers
of $\psi_{-1}=x$ in this expansion.

\paragraph{A formula for the lower \edgepoly{}.} For $i\in \Ni$,
we define the integer
\begin{equation}\label{eq:wj}
  w_i := \min \left\{\langle B, V\rangle, \,\, b_k=i,\,\,f_B \ne 0\right\}-v_k(F)
\end{equation}
where $\langle \,,\,\rangle$ stands for the usual scalar product and
with convention $w_i := \infty$ if the minimum is taken over the empty
set. We introduce the set
\[
\Bc(i,w) := \{B \in \Bc(i) \ | \ \, \langle B, V \rangle = w\}
\]
for any $i\in\Ni$ and any $w\in \Ni\cup\{\infty\}$, with convention
$\Bc(i,\infty)=\emptyset$. We get the following key result:

\begin{thm}
  \label{thm:EdgePoly}
  The lower edge $\NP_0(H_k)$ coincides with the lower edge of the
  convex hull of $(i,w_i)_{0\leq i\leq N_k}$. The lower \edgepoly{} of
  $H_k$ equals
  \begin{equation}\label{eq:barHk}
    \bar{H}_k =\sum_{(i,w_i)\in \NP_0(H_k)} \left(\sum_{B\in \Bc(i,w_i+v_k(F))}f_B \Lambda^{B-B_0} \right)x^{w_i} y^i,
  \end{equation}
  where $B_0 := (0,\ldots,0, N_k)$.
\end{thm}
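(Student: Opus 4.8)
The plan is to compute the pullback $\pi_k^{*}F$ from the $\Psi$-adic expansion of $F$, and then to extract $\NP_0(H_k)$ and $\bar H_k$ from the identity $\pi_k^{*}F=U_kH_k$ of \eqref{eq:pikHk}, exploiting that $U_k(0,0)\in\Ki_k^{\times}$. Throughout, ``weight'' refers to the grading $x^{a}y^{b}\mapsto a$ on $\Ki_k[[x]][y]$, under which a monomial $\Psi^{B}$ carries weight $\langle B,V\rangle$.

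The substantial step is a description of the images $\pi_k^{*}\psi_j$, which I would obtain by induction on $k$ using $\pi_k=\pi_{k-1}\circ\sigma_k$ with $\sigma_k$ as in \eqref{eq:tausigma}, together with the shift defining $H_k$ in \eqref{eq:Hk}. I expect that, modulo terms of strictly larger weight, one has $\pi_k^{*}\psi_j=\lambda_{k,j}x^{v_{k,j}}$ for $-1\le j\le k-2$, $\pi_k^{*}\psi_{k-1}=\lambda_{k,k-1}x^{v_{k,k-1}}(1+\gamma y)$ for some explicit $\gamma\in\Ki_k$ (with $\gamma=0$ when $k=0$), and $\pi_k^{*}\psi_k=\lambda_{k,k}x^{v_{k,k}}y$; this is exactly the data the recursion \eqref{eq:update} encodes. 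The inductive step is where the relations $\deg\psi_j=\deg\psi_{j-1}q_j\ell_j$ and the defining inequality $\deg(F-\psi_k^{N_k})<\deg F-\deg\psi_k$ of the approximate root are used — the latter to guarantee that the correction $F-\psi_k^{N_k}$, and powers of $\psi_k$ beyond $\psi_k^{N_k}$, contribute nothing below $\NP_0(H_k)$. Multiplying the above relations out gives, for $B\in\Bc$, the leading form $\pi_k^{*}\Psi^{B}\equiv\Lambda^{B}x^{\langle B,V\rangle}(1+\gamma y)^{b_{k-1}}y^{b_k}$ modulo larger weight, where $\Lambda^{B}:=\prod_{j=-1}^{k}\lambda_{k,j}^{b_j}$.

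Summing over $B$ and collecting, for each $y$-degree, the contributions of least weight shows that the lower part of $\NP(\pi_k^{*}F)$ is the convex hull of the points $(i,w_i+v_k(F))$, with $w_i$ and the sets $\Bc(i,w)$ as in \eqref{eq:wj}, and that the corresponding edge polynomial is $\sum_i\bigl(\sum_{B\in\Bc(i,w_i+v_k(F))}f_B\Lambda^{B}\bigr)x^{w_i}y^i$ after the binomial redistribution produced by the factors $(1+\gamma y)^{b_{k-1}}$. Finally, since the lower-left behaviour of $U_k$ is the constant $U_k(0,0)$, division by $U_k$ does not move that lowest part; matching it against the monic top term $\psi_k^{N_k}\leftrightarrow y^{N_k}$ fixes $v_k(F)=\langle B_0,V\rangle$ and replaces $\Lambda^{B}$ by $\Lambda^{B-B_0}$, which is \eqref{eq:barHk}.

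The step I expect to be the real obstacle is that $\Ki_k$ is only a finite product of fields, so $\val$ is merely super-additive and a coefficient $\Lambda^{B}$ nonzero in $\Ki_k$ may still vanish on some factor; one must rule out such hidden cancellations along the lower edge by checking componentwise that the relevant $\lambda_{k,j}$ and the values of the unit parts stay invertible. This is exactly where the square-freeness of the $P_j$ (giving $P_j'(z_j)\in\Ki_j^{\times}$), the hypothesis $P_j(0)\in\Ki_j^{\times}$ when $q_j>1$, and the standing restriction on the characteristic of $\Ki$ enter, in the spirit of the proof of Lemma \ref{lem:anptransform}, and where the simultaneous occurrence of $y$ in the leading forms of $\pi_k^{*}\psi_{k-1}$ and $\pi_k^{*}\psi_k$ makes the bookkeeping with $\Bc$ delicate. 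For $k$ within the irreducible range this analysis is already carried out in \cite{PoWe19}; here it must be pushed through in the pseudo-irreducible case.
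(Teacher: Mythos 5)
Your plan is essentially the same as what the paper relies on: the paper's own ``proof'' simply defers to Theorems~2, 3 and~4 of \cite{PoWe19} (and to Proposition~6 of \cite{PoWeBis19}) after noting that the only new delicate point in the pseudo-irreducible setting is that $z_k$, hence some $\lambda_{k,j}$'s, may be zero divisors when $q_k=1$. You correctly identify that the heart of the argument is the weight-graded leading form of $\pi_k^*\Psi^B$, followed by division by $U_k$ and normalisation against the monic top term, and you correctly point to the zero-divisor issue as the real new obstacle.

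Two details deserve attention, though, because they are exactly where the pseudo-irreducible case differs from the irreducible one. First, your posited leading form $\pi_k^{*}\psi_{k-1}\equiv\lambda_{k,k-1}x^{v_{k,k-1}}(1+\gamma y)$ implicitly factors $\lambda_{k,k-1}$ out of the $y$-coefficient, which presupposes that $\lambda_{k,k-1}$ is invertible. But when $q_k=1$ one has $t_k=0$, $s_k=1$ and hence $\lambda_{k,k-1}=\lambda_{k-1,k-1}z_k$, which is a zero divisor whenever $z_k$ is. The correct leading form is rather $\pi_k^{*}\psi_{k-1}\equiv x^{v_{k,k-1}}\bigl(\lambda_{k,k-1}+\delta_k y\bigr)$ with $\delta_k=\lambda_{k-1,k-1}z_k^{t_kv_{k-1,k-1}}$ \emph{invertible} and $\lambda_{k,k-1}$ possibly not; your argument survives because after expanding $(\lambda_{k,k-1}+\delta_k y)^{b_{k-1}}$ the pure-$\lambda_{k,k-1}$ term still reproduces $\Lambda^{B}$ and the $\delta_k$-laden terms raise the $y$-degree without lowering the weight, hence land weakly above the lower edge, but the ``$(1+\gamma y)$'' packaging as written is not available. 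Second, and relatedly, your closing remark that one rules out hidden cancellations ``by checking componentwise that the relevant $\lambda_{k,j}$\dots stay invertible'' is too strong: the paper only asserts (and needs) that $\lambda_{k,k}\in\Ki_k^{\times}$, and indeed other $\lambda_{k,j}$ genuinely fail to be invertible once some $q_j=1$. What has to be shown instead is that the particular coefficient in \eqref{eq:barHk} at each vertex of the lower edge of $\operatorname{Conv}(i,w_i)$ is nonzero in $\Ki_k$, which is a statement about the sums $\sum_B f_B\Lambda^{B-B_0}$ rather than about individual $\lambda_{k,j}$. With these two repairs your sketch matches the argument the paper outsources to \cite{PoWe19,PoWeBis19}.
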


\begin{proof}
  This is a variant of Theorems 2, 3 and 4 in \cite{PoWe19}, where
  degeneracy conditions are replaced now by pseudo-degeneracy
  conditions. The delicate point is that $z_k$ might be here a zero
  divisor when $q_k=1$. However, we can show that we always have
  $\lambda_{kk}\in \Ki_k^\times$. In particular, \eqref{eq:barHk} is
  well-defined and a careful reading shows that the proofs of Theorem
  2, 3 and 4 in \cite{PoWe19} remain valid under the weaker hypothesis
  of pseudo-degeneracy. We refer to Proposition 6 in the longer
  preprint \cite{PoWeBis19} for details.
\end{proof}

\begin{xmp}
  If $F=\sum_{i=0}^\dy a_i y^i$, then $\psi_0=y-c_0(x)$ where
  $c_0=-\frac{a_{\dy-1}}{\dy}$. It follows that at rank $k=0$, the
  coefficients of the $\Psi$-adic expansion of $F$ coincide with the
  coefficients of the $(x,y)$-adic expansion of $H_0$ as defined in
  \eqref{eq:H0}. This illustrates that \eqref{eq:barHk} holds at rank
  $k=0$.
\end{xmp}

\subsection{The algorithm}\label{ssec:algo-irr}
We obtain the following sketch of algorithm. Subroutines \AppRoot{},
\Expand{} and \Edata{} respectively compute the approximate root, the
$\Psi$-adic expansion and the edge data.

\begin{algorithm}[H]
  \nonl\TitleOfAlgo{\PIrr$(F)$\label{algo:irreducible}}%
  \KwIn{$F\in\Ki[[x]][y]$ Weierstrass with $\Char(\Ki)$ not dividing
    $\deg(F)$.}%
  \KwOut{\True{} if $F$ is pseudo-irreducible, and \False{}
    otherwise.}%
  $N\gets \deg(F)$, $V\gets (1,0)$, $\Lambda\gets (1,1)$,
  $\Psi\gets(x)$\;%
  \While{$N > 1$}{%
    $\Psi\gets \Psi\cup \AppRoot{}(F,N)$\;%
    $\sum_{B} f_B \Psi^{B}\gets \Expand{}(F,\Psi)$\;%
    Compute $\bar{H}$ using \eqref{eq:barHk}\;%
    \lIf{$\bar{H}$ is not pseudo-degenerated}{\Return{\False}}%
    $(q,m,P,N)\gets \Edata(\bar{H})$\;%
    Update $V,\Lambda$ using \eqref{eq:update} }%
  \Return{\True{}}
\end{algorithm}

\begin{thm}\label{thm:Irr}
  Algorithm \PIrr{} returns the correct answer.
\end{thm}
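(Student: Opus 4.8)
The plan is to show by induction on the rank $k$ that after $k$ iterations of the \texttt{While} loop, the algorithm has correctly recovered the edge data $(q_1,m_1,P_1,N_1),\dots,(q_k,m_k,P_k,N_k)$ of $F$ and the current value $N$ equals $N_k$, so that the loop terminates exactly when $N_g$ is reached, returning \True{} iff $N_g=1$, i.e. iff $F$ is pseudo-irreducible by Definition \ref{def:pseudo-irr}. This reduces the whole correctness statement to two things: (i) that the polynomial $\bar H$ computed in the body of the loop via formula \eqref{eq:barHk} really is the lower \edgepoly{} $\bar H_k$ of the Weierstrass polynomial $H_k$ built in Section \ref{ssec:pseudoIrr}; and (ii) that the auxiliary lists $V,\Lambda,\Psi$ maintained by the loop are exactly the $(V,\Lambda)$-sequence of \eqref{eq:update} and the list of approximate roots $\psi_{-1},\dots,\psi_k$, so that Theorem \ref{thm:EdgePoly} applies at each step.

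First I would check the base case $k=0$: the initialisation $N\gets\deg(F)$, $V\gets(1,0)$, $\Lambda\gets(1,1)$, $\Psi\gets(x)$ matches the definitions at rank $0$, and after computing $\psi_0=\AppRoot(F,\deg F)$ the $\Psi$-adic expansion of $F$ is the $(x,y)$-adic expansion of $H_0$ (this is the Example following Theorem \ref{thm:EdgePoly}); hence formula \eqref{eq:barHk} returns $\bar H_0$. For the inductive step, assume the loop invariant holds at rank $k-1$ with current integer $N=N_{k-1}>1$. Then \AppRoot{} appends $\psi_k$ (the $N_k$-th approximate root, with $N_k$ the value produced at the previous step), \Expand{} produces the $\Psi$-adic expansion of $F$ with $\Psi=(\psi_{-1},\dots,\psi_k)$, and Theorem \ref{thm:EdgePoly} guarantees that \eqref{eq:barHk} yields precisely $\bar H_k$. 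Now there are two cases. If $\bar H_k$ is not pseudo-degenerated, then by Definition \ref{def:pseudo-irr} we are at the terminal rank $g=k$ with $N_g=N_{k}>1$, so $F$ is not pseudo-irreducible and the algorithm correctly returns \False{}. Otherwise \Edata{} extracts the edge data $(q_k,m_k,P_k,N_k)$ — using the description in Remark \ref{rem:testpseudodeg} to recognise pseudo-degeneracy and read off $(q,m,P,N)$ — and the update rules \eqref{eq:update} advance $V,\Lambda$ to rank $k$; since $N_k=q_k\ell_k\cdot(\text{old }N)/ (q_k\ell_k)$, wait: more precisely $N_k = N_{k-1}/(q_k\ell_k)<N_{k-1}$ by Lemma \ref{lem:anptransform}, so the new current integer is $N_k$ and the invariant is restored. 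Because the $N_k$-sequence is strictly decreasing (Section \ref{ssec:pseudoIrr}), the loop halts after finitely many steps, and it exits the \texttt{While} precisely when $N=1$, i.e. $N_g=1$; in that case it returns \True{}, which by Definition \ref{def:pseudo-irr} is the correct output.

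The main obstacle is point (i): justifying that \eqref{eq:barHk} computes $\bar H_k$ when $q_k=1$ and $z_k$ is a zero divisor in $\Ki_k$ — exactly the case that distinguishes pseudo-degeneracy from the quasi-degeneracy of \cite{PoWe19}. This is where the invertibility $\lambda_{kk}\in\Ki_k^\times$ noted in the proof of Theorem \ref{thm:EdgePoly} is essential to ensure \eqref{eq:barHk} is well defined and that the residual polynomial extracted downstream is genuinely square-free over each field factor. I would invoke Theorem \ref{thm:EdgePoly} as a black box here (its proof being deferred to \cite{PoWeBis19}), so that within the present argument this difficulty is already packaged; the remaining work is the bookkeeping that the algorithm's variables track the recursively defined objects of Section \ref{sec:pseudo-irr}, which is routine given the recursion formulas \eqref{eq:update} and the definition of the $\Psi$-adic expansion. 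Termination and the final \True{}/\False{} dichotomy then follow immediately from Definition \ref{def:pseudo-irr}.
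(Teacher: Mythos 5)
Your proposal matches the paper's proof, which is simply the one-line citation of Definition~\ref{def:pseudo-irr}, Theorem~\ref{thm:pseudo}, and Theorem~\ref{thm:EdgePoly}; you have just fleshed out the loop-invariant bookkeeping that those citations implicitly package. The only discrepancies are cosmetic: the paper also names Theorem~\ref{thm:pseudo}, which is arguably redundant since the algorithm's output is specified purely in terms of Definition~\ref{def:pseudo-irr}, and your inductive step has a small off-by-one slip (if $N=N_{k-1}$ enters the loop body, $\AppRoot(F,N)$ returns $\psi_{k-1}$, not $\psi_k$) that does not affect soundness.
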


\begin{proof}
  Follows from Definition \ref{def:pseudo-irr}, Theorem
  \ref{thm:pseudo} and Theorem \ref{thm:EdgePoly}.
\end{proof}

\paragraph{Proof of Theorem \ref{thm:main2}.} We deduce from
\cite[Proposition 12]{PoWe19} that algorithm \PIrr{} may run with an
expected $\Ot(\vF)$ operations over $\Ki$\footnote{In
  \cite[Prop.12]{PoWe19}, the condition $P_k(0)\in \Ki_k^\times$ is
  imposed even if $q_k=1$, but this has no impact from a complexity
  point of view.}. To this aim, we use:
\begin{itemize}
\item Suitable truncation bounds for the powers of $x$, updated at
  each step.
\item Primitive representation of the various residue rings $\Ki_k$
  (Las-Vegas subroutines)
\item Suitable implementation of subroutines \AppRoot{}, \Expand{}, \Edata{}
  and of the pseudo-degeneracy tests (square-free univariate factorisation over
  direct product of fields, see Remark \ref{rem:testpseudodeg}).
\end{itemize}
If $F$ is pseudo-irreducible, we can deduce from the edge data of $F$
the characteristic exponents and the intersection sets of $F$ (Theorem
\ref{thm:pseudo}), together with the discriminant valuation $\vF$
(Corollary \ref{cor:disc}). Theorem \ref{thm:main2} follows.
$\hfill\square$

\begin{rem}
  Note that if we rather use computations \eqref{eq:Gk} and
  \eqref{eq:Hk} up to suitable precision to check if $F$ is
  pseudo-irreducible (hence balanced), the underlying algorithm has
  complexity $\Ot(\dy\vF)$ when using similar cautious algorithmic
  tricks as above (see \cite[Section 3]{PoWe17}). This bound is sharp
  (see e.g. \cite[Example 1]{PoWe19}) and is too high for our
  purpose. One of the main reason is that computing the intermediate
  polynomials $G_k$ in \eqref{eq:Gk} via Hensel lifting up to
  sufficient precision might cost $\Omega(\dy\vF)$. This shows the
  importance of using approximate roots.
\end{rem}
 
\subsection{Non Weierstrass polynomials.}\label{ssec:nonweirestrass}
From a computational aspect with a view towards factorisation in
$\Ki[[x]][y]$, it seems interesting to extend Theorems \ref{thm:main2}
and \ref{thm:pseudo} to the case of non Weierstrass polynomials.

\paragraph{Non Weierstrass balanced polynomials.} If $F$ is
absolutely irreducible but not necessarily Weierstrass, it defines a
unique germ of irreducible curve on the line $x=0$, with center
$(0,c)$, $c\in \algclos{\Ki}\cup \{\infty\}$. It seems to be a natural
option to require that the equisingularity type of a germ of plane
curve along the line $x=0$ does not depend on its center. This point
of view leads us to define then the characteristic exponents of $F$ as
those of the shifted polynomial $F(x,y+c)$ if $c\in \algclos{\Ki}$ or
of the reciprocal polynomial $\tilde{F}=y^{\dy} F(x,y^{-1})$ if
$c=\infty$ (note that these change of coordinates have not impact on
the tangency with the $x$-axis). The formula \eqref{eq:intmult} of the
intersection multiplicity also extends by linearity to arbitrary
coprime polynomials $G, H\in \algclos{\Ki}[[x]][y]$, taking into
account the sum of intersection multiplicities between all germs of
curves defined by $G$ and $H$ along the line $x=0$. The intersection
might be now zero if (and only if) $G$ and $H$ do not have branches
with the same center. We can thus extend the definition of
intersection sets to non Weierstrass polynomials, allowing now
$0\in \Gamma(F_i)$.  Finally, we may extend Definition
\ref{def:balanced} to an arbitrary square-free polynomial
$F\in \Ki[[x]][y]$.

\paragraph{Pseudo-Irreducibility of non Weierstrass polynomials.} We
distinguish the monic case, for which approximate roots are defined,
and the non monic case.

$\bullet$ If $F$ is monic, the construction of Section
\ref{sec:pseudo-irr} remains valid, a slight difference being that the
first polynomial $H_0$ might be now monic (and $m_1=0$ is
allowed). However the remaining polynomials $H_k$ are still
Weierstrass for $k\ge 1$. Hence the definition of a pseudo-irreducible
polynomial extends to the monic case and we can check that Theorem
\ref{thm:pseudo} still hold for monic polynomials. Moreover, the
approximate root of a monic polynomial $F$ are still defined, and it
is shown in \cite{PoWe19} that Theorem \ref{thm:EdgePoly} holds too in
this case. Hence, we let run algorithm \PIrr{} as in the Weierstrass
case. However to keep a small complexity, we do not compute primitive
elements of $\Ki_k$ over the field $\Ki$ but only over the next
residue ring $\Ki_1=\Ki_{P_1}$. The overall complexity of this slightly
modified algorithm becomes $\Ot(\vF+\dy)$. We refer the reader to
\cite{PoWe19} for details.
  
$\bullet$ There remains to consider the case when $F$ is not
monic. One way to deal with this problem is to use a projective change
of the $y$ coordinates in order to reduce to the monic case. Since
$\Ki$ has at least $\dy+1$ elements by assumption, we can compute
$z\in \Ki$ such that $F(0,z)\ne 0$ with at most $\dy+1$ evaluation of
$F(0,y)$ at $z=0,1,\ldots,\dy$. This costs at most $\Ot(\dy)$ using
fast multipoint evaluation \cite[Corollary 10.8]{GaGe13}. One such a
$z$ is found, we can apply the previous strategy to the polynomial
$\tilde{F}:=y^{\dy}F\left(\frac{zy+1}{y}\right)\in \Ki[[x]][y]$ which
has by construction an invertible coefficient that we simply invert up
to suitable precision. We have $\deg(F)=\deg(\tilde{F})$ and
$\vF(F)=\vF(\tilde{F})$ (assuming that $\vF$ is then defined as the
valuation of the resultant between $F$ and $F_y$ instead of the
valuation of the discriminant which may vary under projective change
of coordinates). So the complexity remains the same.  Moreover, $F$
and $\tilde{F}$ have same number of absolutely irreducible factors,
same sets of characteristic exponents (by the very definition) and
same intersection sets (use that the $x$-valuation of the resultant is
invariant under projective change of the $y$ coordinate (see
e.g. \cite[Chapter 12]{GeKaZe94}). In particular, $F$ is balanced if and
only if $\tilde{F}$ is. This shows that we can test if an arbitrary
square-free polynomial $F$ is balanced - and if so, compute the
equisingular types of all germs of curves it defines along the line
$x=0$ - within $\Ot(\vF+\dy)$ operations over $\Ki$. We refer the
reader to \cite{PoWe19} for details.

\begin{rem}
  If $F$ is not monic, we could also have followed the following
  option. We can extend the construction of Section
  \ref{sec:pseudo-irr} by allowing positive slopes at the first call
  (so $m_1<0$ is allowed) and extend Theorem \ref{thm:pseudo} by
  considering approximate roots in the larger ring
  $\Ki((x))[y]$. However, it turns out that this option is not
  compatible with our $PGL_2(\Ki)$-invariant point of view when $F$
  defines a germ centered at $(0,\infty)$, and Theorem
  \ref{thm:pseudo} would require some slight modifications to hold in
  this larger context.
\end{rem}

\paragraph{Bivariate polynomials.} If the input $F$ is given as a
bivariate polynomial $F\in \Ki[x,y]$ with partial degrees
$\dx:=\deg_x(F)$ and $\dy=\deg_y(F)$, the well known upper bound
$\vF\le 2\dx\dy$ leads to a complexity estimate $\Ot(\dx\dy)$ which
is quasi-linear with respect to the arithmetic size of the
input. Moreover, up to perform a slight modification of the algorithm,
there is no need to assume $F$ square-free in this ``algebraic'' case
(see again \cite{PoWe19} for details).

\subsection{Some examples}\label{ssec:examples}

\begin{xmp}[\textbf{balanced}]\label{ex1}
  Let
  $F=y^6-3x^3 y^4-2x^2 y^4+3x^6 y^2+x^4 y^2-x^9+2x^8-x^7\in \Qi[x,y]$.
  This small example is constructed in such a way that $F$ has $3$
  irreducible factors $(y-x)^2-x^3$, $(y+x)^2-x^3$, $y^2-x^3$ and we
  can check that $F$ is balanced, with $e=2$, $f=3$ and
  $\C(F_i)=(2;3)$ and $\Gamma_i(F)=(4,4)$ for all $i=1,2,3$. Let us
  recover this with algorithm \PIrr{}.

  \emph{Initialise.} We have $N_0=\dy=6$, and we let $\psi_{-1}=x$,
  $V=(1,0)$ and $\Lambda=(1,1)$.

  \emph{Step $0$.} The $6^{th}$-approximate root of $F$ is $\psi_0=y$
  and we deduce that $\bar{H}_0=y^6-2x^2 y^4+x^4 y^2=(y(y^2-x^2))^2.$
  Thus, $H_0$ is pseudo-degenerated with edge data
  $(q_1,m_1,P_1,N_1)=(1,1,Z_1^3-Z_1,2)$.  Accordingly to
  \eqref{eq:update}, we update $V=(1,1,1)$ and
  $\Lambda=(1,z_1,3z_1^2-1)$. Note that $\NP(F)$ is not straight. In
  particular, $F$ is reducible in $\Qi[[x]][y]$.

  \emph{Step $1$.} The $2^{th}$-approximate root of $F$ is
  $\psi_1=y^3-\frac32 x^3y-x^2y$ and $F$ has $\Psi$-adic expansion
  $F=\psi_1^2 -3\psi_0^2 \psi_{-1}^5+\frac34 \psi_0^2
  \psi_{-1}^6-\psi_{-1}^7+2\psi_{-1}^8- \psi_{-1}^9$.
  The monomials reaching the minimal values \eqref{eq:wj} are
  $\psi_1^2$ (for $j=2$) and $-3\psi_0^2 \psi_{-1}^5$ and
  $\psi_{-1}^7$ (for $j=0$). We deduce from \eqref{eq:barHk} that
  $\bar{H}_1=y^2-\alpha x$, where $\alpha=(3z_1^2+1)/(3z_1^2-1)^2$ is
  easily seen to be invertible in $\Qi_1$ (in practice, we compute
  $P\in \Qi[Z_1]$ such that $\alpha=P\mod P_1$ and we check
  $gcd(P_1,P)=1$). We deduce that $H_1$ is pseudo-degenerated with
  edge data $(q_2,m_2,P_2,N_2)=(2,1,Z_2-\alpha,1)$.  As $N_2=1$, we
  deduce that $F$ is balanced with $g=2$.

  \emph{Conclusion.} We deduce from Theorem \ref{thm:pseudo} that $F$
  has $f=\ell_1\ell_2=3$ irreducible factors over
  $\algclos{\Ki}[[x]][y]$ of same degrees $e=q_1 q_2=2$.  Thanks to
  \eqref{eq:BkMk}, we compute $B_0=e=2$, $B_1=2$, $B_2=3$ and $M_1=4$,
  $M_2=6$.  We deduce that all factors of $F$ have same characteristic
  exponents $\C(F_i)=(B_0;B_2)=(2;3)$ and same intersection sets
  $\Gamma_i(F)=(M_1,M_1)=(4,4)$ (i.e. $M_1$ which appears
  $\hat{f}_0-\hat{f}_1=3-1$ times), as
  required. %, and we can check that the result is correct (fortunately !)
\end{xmp}

\begin{xmp}[\textbf{non balanced}]\label{ex2}
  Let
  $F={{y}^{6}}-{{x}^{6}} {{y}^{4}}-2 {{x}^{4}} {{y}^{4}}-2 {{x}^{2}}
  {{y}^{4}}+2 {{x}^{10}} {{y}^{2}}+3 {{x}^{8}} {{y}^{2}}-2 {{x}^{6}}
  {{y}^{2}}+{{x}^{4}} {{y}^{2}}-{{x}^{14}}+2 {{x}^{12}}-{{x}^{10}}\in
  \Qi[x,y]$.
  This second small example is constructed in such a way that $F$ has
  $6$ irreducible factors $y+x-x^2$, $y+x-x^2$, $y-x-x^2$, $y-x+x^2$,
  $y-x^3$ and $y+x^3$ and we check that $F$ is not balanced, as
  $\Gamma_i(F)=(1,1,1,1,2)$ for $i=1,\ldots,4$ while with
  $\Gamma_i(F)=(1,1,1,1,3)$ for $i=5,6$. Let us recover this with
  algorithm \PIrr{}.

  \emph{Initialise.} We have $N_0=\dy=6$, and we let $\psi_{-1}=x$,
  $V=(1,0)$ and $\Lambda=(1,1)$.

  \emph{Step $0$.} The $6^{th}$-approximate root of $F$ is $\psi_0=y$
  and we deduce that $\bar{H}_0=y^6-2x^2 y^4+x^4 y^2=(y(y^2-x^2))^2.$
  Thus, as in Example \ref{ex1}, $H_0$ is pseudo-degenerated with edge
  data $(q_1,m_1,P_1,N_1)=(1,1,Z_1^3-Z_1,2)$.  Accordingly to
  \eqref{eq:update}, we update $V=(1,1,1)$ and
  $\Lambda=(1,z_1,3z_1^2-1)$.

  \emph{Step $1$.} The $2^{th}$-approximate root of $F$ is
  $\psi_1=y^3-yx^2-yx^4-\frac12 yx^{6}$ and $F$ has $\Psi$-adic
  expansion
  $F=\psi_1^2 -\psi_{-1}^{10}+2
  \psi_{-1}^{12}-\psi_{-1}^{14}-4\psi_{-1}^{6}\psi_{0}^{2}+\psi_{-1}^{8}\psi_{0}^{2}+\psi_{-1}^{10}\psi_{0}^{2}-\frac14\psi_{-1}^{12}\psi_{0}^{2}$.
  The monomials reaching the minimal values \eqref{eq:wj} are
  $\psi_1^2$ (for $j=2$) and $-4\psi_{-1}^{6}\psi_{0}^{2}$ (for
  $j=0$). We deduce from \eqref{eq:barHk} that
  $\bar{H}_1=y^2-\alpha x^2$, where $\alpha=4z_1^2/(3z_1^2-1)^2$. As
  $z_1$ is a zero divisor in $\Qi_1=\Qi[Z_1]/(Z_1^3-Z_1)$ and
  $(3z_1^2-1)=P_1'(z_1)$ is invertible in $\Qi_1$, we deduce that
  $\alpha$ is a zero divisor. It follows that $\bar{H}_1$ is not the
  power of a square-free polynomial. Hence $H_1$ is not
  pseudo-degenerated and $F$ is not balanced (with $g=1$), as
  required. In order to factorise $F$, we would need at this stage
  to split the algorithm accordingly to the discovered factorisation
  $P_1=Z_1(Z_1^2-1)$ before continuing the process, as described in
  \cite{PoWe17}.
\end{xmp}

\begin{xmp}[\textbf{non Weierstrass}]\label{ex:nonweierstrass}
  Let
  $F=(y+1)^6-3x^3(y+1)^4-2(y+1)^4+3x^6(y+1)^2+(y+1)^2-x^9+2x^6-x^3$. We
  have $F=((y+2)^2-x^3)((y+1)^2-x^3)(y^2-x^3)$ from which we deduce
  that $F$ is balanced with three irreducible factors with
  characteristic exponents $\C(F_i)=(2,3)$ and intersection sets
  $\Gamma_i(F)=(0,0)$. Let us recover this with algorithm \PIrr{}.

  \emph{Initialise.} We have $N_0=\dy=6$, and we let $\psi_{-1}=x$,
  $V=(1,0)$ and $\Lambda=(1,1)$.

  \emph{Step $0$.} The $6^{th}$-approximate root of $F$ is
  $\psi_0=y+1$. We have
  $F=\psi_0^6-3\psi_{-1}^3 \psi_0^4- 2 \psi_0^4 + 3 \psi_{-1}^6
  \psi_0^2 + \psi_0^2 - \psi_{-1}^9 + 2 \psi_0^6 - \psi_{-1}^3$.
  By \eqref{eq:wj}, the monomials involved in the lower edge of $H_0$
  are $\psi_0^6, -2\psi_0^4, \psi_0^2$. We deduce from
  \eqref{eq:barHk} that $\bar{H}_0=(y^3-y)^2$ so that $H_0$ is
  pseudo-degenerated with edge data
  $(q_1,m_1,P_1,N_1)=(1,0,Z_1^3-Z_1,2)$. Note that $m_1=0$. This is
  the only step of the algorithm where this may occur. Using
  \eqref{eq:update}, we update $V=(1,0,0)$ and
  $\Lambda=(1,z_1,3z_1^2-1)$.
  
  \emph{Step $1$} The $N_1=2^{th}$ approximate root of $F$ is
  $\psi_1=(y+1)^3-3/2 x^3 (y+1)-(y+1)$ and $F$ has $\Psi$-adic
  expansion
  $F=\psi_1^2 -\psi_{-1}^3-3\psi_{-1}^3
  \psi_0^2+2\psi_{-1}^6-\psi_{-1}^9+3/4 \psi_{-1}^6\psi_0^2$.
  We deduce that the monomials reaching the minimal values
  \eqref{eq:wj} are $\psi_1^2$ (for $j=2$) and $-\psi_{-1}^3$,
  $-3\psi_{-1}^3 \psi_0^2$ (for $j=0$). We deduce from
  \eqref{eq:barHk} that $\bar{H}_1=y^2-\alpha x^3$, where
  $\alpha=(\lambda_{1,-1}^3+3\lambda_{1,-1}^3\lambda_{1,0}^2)\lambda_{1,1}^{-2}=(3z_1^2+1)/(3z_1^2-1)^2$
  is easily seen to be invertible in $\Qi_1$. We deduce that $H_1$ is
  pseudo-degenerated with edge data
  $(q_2,m_2,P_2,N_2)=(2,3,Z_2-\alpha,1)$.  As $N_2=1$, we deduce that
  $F$ is balanced with $g=2$. By Theorem \ref{thm:pseudo} (assuming
  only $F$ monic), we get that $F$ has $f=\ell_1\ell_2=3$ irreducible
  factors over $\algclos{\Ki}[[x]][y]$ of same degrees $e=q_1 q_2=2$.
  Thanks to \eqref{eq:BkMk}, we compute $B_0=e=2$, $B_1=0$, $B_2=3$
  and $M_1=0$, $M_2=6$.  By Theorem \ref{thm:pseudo}, we deduce that
  all factors of $F$ have same characteristic exponents
  $\C(F_i)=(B_0;B_2)=(2;3)$ and same intersection sets
  $\Gamma_i(F)=(M_1,M_1)=(0,0)$ as required.
\end{xmp}

%%% Local Variables:
%%% mode: latex
%%% TeX-master: "equi-singularity"
%%% End:

\bibliographystyle{abbrv} {\bibliography{tout}}
\addcontentsline{toc}{section}{References.}

\end{document}